\pgfplotsset{compat=1.17}
\newmdenv[linecolor=green!50!black, fontcolor=green!50!black, backgroundcolor=green!20, linewidth=2pt, roundcorner=10pt]{gnote}
\definecolor{amber}{rgb}{1.0, 0.49, 0.0}
\definecolor{darkpastelgreen}{rgb}{0.01, 0.75, 0.24}
\definecolor{darkred}{rgb}{0.64, 0.0, 0.0}
\definecolor{amberhl}{rgb}{1.0, 0.75, 0.0}
\newmdenv[linecolor=blue!50!black, fontcolor=blue!50!black, backgroundcolor=blue!20, linewidth=2pt, roundcorner=10pt]{anote}
\newtheorem{assumption}{Assumption}
\newcolumntype{L}{>{$}l<{$}}
\newcommand{\fl}{\text{flexp}}
\newcommand{\sint}{\texttt{short int}}
\newcommand{\qpr}{\texttt{quad}}
\newcommand{\dpr}{\texttt{double}}
\newcommand{\spr}{\texttt{single}}
\newcommand{\hpr}{\texttt{half}}
\newcommand{\ppr}{\texttt{perforate}}
\newcommand{\bsize}{M_k}
\newcommand{\qdot}{\textbf{qdot}}
\newcommand{\secref}[1]{Section~\ref{#1}}
\title{Qdot: Quantized Dot Product Kernel for Approximate High Performance Computing\thanks{Submitted to the editors \today.\\ This work was performed under the auspices of the U.S. Department of Energy by Lawrence Livermore National Laboratory under Contract DE-AC52-07NA27344. Work at LLNL was funded by the Laboratory Directed Research and Development Program under project tracking code 20-ERD-043, LLNL-JRNL-820357-DRAFT.}}
\author{James Diffenderfer\thanks{Lawrence Livermore National Laboratory,
    Livermore, CA. email: (diffenderfer2, oseikuffuor1, gopalakrishn1)@llnl.gov}
  \and Daniel Osei-Kuffuor\footnotemark[2]
  \and Harshitha Menon\footnotemark[2]
}
\definecolor{RED}{rgb}{1,0,0}\definecolor{BLUE}{rgb}{0,0,1}\definecolor{purple}{rgb}{0.5,0,0.5} %DIF PREAMBLE
\begin{document}

\maketitle

% REQUIRED
\begin{abstract}
Approximate computing techniques have been successful in reducing computation and power costs in several domains. 
However, error sensitive applications in high-performance computing are unable to benefit from %many approximate computing techniques as 
existing approximate computing strategies %\red{(e.g. loop perforation)} 
that are not developed with guaranteed error bounds. While approximate computing techniques can be developed for individual high-performance computing applications by domain specialists, %\red{(do we have any examples of this? I imagine it is true so I put it in here for now but we can remove it)}, 
this often requires additional theoretical analysis and potentially extensive software modification. Hence, the development of low-level error-bounded approximate computing strategies that can be introduced into any high-performance computing application without requiring additional analysis or significant software alterations is desirable. In this paper, we provide a contribution in this direction by proposing a general framework for designing error-bounded approximate computing strategies and apply it to the dot product kernel to develop \textbf{qdot} --- an error-bounded approximate dot product kernel. Following the introduction of \textbf{qdot}, we perform a theoretical analysis that yields a deterministic bound on the relative approximation error introduced by \textbf{qdot}. Empirical tests are performed to illustrate the tightness of the derived error bound and to demonstrate the effectiveness of \textbf{qdot} on a synthetic dataset, as well as two scientific benchmarks -- Conjugate Gradient (CG) and the Power method. In particular, using \textbf{qdot} for the dot products in CG  can result in a majority of components being perforated or quantized to half precision without increasing the iteration count required for convergence to the same solution as CG using a double precision dot product.
\end{abstract}

% REQUIRED
\begin{keywords}
  Approximate computing, high-performance computing, error bound, dot product
\end{keywords}

% REQUIRED
\begin{AMS}
  65D15, % Algorithms for approximation of functions
  65G20, %	Algorithms with automatic result verification
  65F99 % Numerical linear algebra (for CG and eigenvalue experiments)
\end{AMS}

% Body
\section {Introduction}
\label{sec:introduction}
Scientific application development is rife with approximation techniques. From spatial and temporal discretization schemes to numerical solution strategies, approximation techniques are quite ubiquitous in scientific computing. In the context of high performance computing (HPC), approximate computing strategies have been typically employed to (a) reduce computational complexity - for example polynomial approximations of functional forms \cite{trefethen97} - and (b) make better use of computing resources - for example iterative refinement on Tensor Core Units \cite{haidar2020mixed}. For all of these strategies in HPC, the overarching goal is to safeguard the accuracy of the desired quantity of interest, although in some cases, the additional benefit of preserving the convergence properties of the underlying algorithm may be realized \cite{dembo1982inexact}.

Outside these traditional approximation strategies, other methods have been developed to reduce computational complexity by economizing computation, that is, computing only when necessary. Strategies such as loop perforation \cite{mittal2016survey} are used to skip specific iterations of a loop in a computational kernel in order to reduce the cost. The choice of which iterations to skip may be random or prescribed by some predetermined stride. Another common strategy known as function memoization \cite{michie1968memo} employs a look-up table to store computed entries of computationally expensive kernels. Subsequent calls to the kernel may then be substituted by retrieving an appropriate entry in the look-up table. While these approximate strategies are more efficient than their exact counterparts, they are not designed with closed form bounds for error introduced by the approximation. This lack of error control precludes them from being considered as is in HPC applications. Significance-aware strategies \cite{dinos2015model} attempt to mitigate this by accepting an approximation within some prescribed tolerance. 

Another class of approximate computing strategies that has gained a lot of interest in recent years is precision-based approximate computing \cite{lindstrom2014zfp,menon2018,gonzalez2013precimonious}. Here, the notion of economizing computation is based on computing only on important information, that is, making use of only        the important bits. The approach typically follows identifying regions of code or portions of an algorithm where low precision arithmetic or storage may be utilized, subject to some error tolerance. These strategies can lead to efficient mixed-precision algorithms that can efficiently utilize current and emerging HPC hardware. 

Existing approximate computing strategies, including the tolerance-based strategies, lack a priori error analysis on their impact on the convergence and the quality of the quantity of interest in the kernels or algorithms they approximate. For most of these methods, this means they need to be deployed as a preprocessing tool to first identify the parameters whose approximation satisfy the required error tolerance, before running the algorithm with these parameters. In this paper, we present a general framework for developing error-bounded approximate computing techniques. We define key terminology that may be used to evaluate other approximate computing strategies, in terms of their effectiveness as an approximation technique and their computational efficiency. We demonstrate this framework by designing an error-bounded quantized dot product algorithm (\qdot) as an approximation technique for a dot product kernel. This new approximation strategy can thus be deployed at run time since its error bound is known and can be manipulated at run time. It is worth noting that hardware-based approximate computing strategies have also been developed for specialized hardware such as FPGA's to exploit efficiency by computing in the presence of hardware faults \cite{mittal2016survey,sinha2016low}. These techniques are beyond the scope of this paper and, as such, are not addressed here.

The paper is organized as follows. In~\secref{sec:error-bounded-ac}, we discuss a general framework for designing error-bounded approximate computing kernels and formalize terminologies for measuring the quality of approximate computing kernels. In~\secref{sec:q-dot}, we apply this framework to the dot product kernel to develop \qdot. In particular, we provide motivation and pseudocode for \qdot \ together with theoretical analysis capturing the relative approximation error introduced by \qdot. In~\secref{sec:results}, we test our implementation of \qdot \ to empirically verify the error bound as well as demonstrate scientific computing applications with dot products that are amenable to extreme levels of quantization and perforation. Specifically, we integrate \qdot \ with CG and Power method %, and Subspace Iteration 
implementations to demonstrate the limits of quantization and perforation that can be achieved without affecting the convergence of these applications. We now provide notation used throughout the paper.

\subsection{Notation} \label{sec:notation}
For an integer $n \geq 1$, we write $[n]$ to denote the set $\{ i \in \mathbb{N} : 1 \leq i \leq n \}$. We denote scalar values by italicized letters, $x$, and vectors by boldface letters, such as $\bm{x}$. Similarly, we denote scalar valued functions by italicized letters, $f$, and vector-valued functions by boldfaced letters, $\bm{f}$. We write $\odot$ to denote the Hadamard, or component-wise, product of two vectors so that $( \bm{x} \odot \bm{y} )_i = x_i y_i$. We write $\otimes$ to denote floating point multiplication. If $\mathcal{S}$ is a subset of indices of $\bm{x}$, then we write $\bm{x}_{\mathcal{S}}$ to denote the subvector of $\bm{x}$ with indices in $\mathcal{S}$. We use the function $\fl : \mathbb{R} \to \mathbb{Z}$ defined by
\begin{align}
\fl(x) := \left\{
   \begin{array}{lcl}
      \lfloor \log_2 (|x|) \rfloor &:& |x| \geq 1 \\
      \lceil \log_2 (|x|) \rceil &:& |x| < 1
   \end{array}
   \right. . \label{def:exp2}
\end{align}
When $x$ is representable as a floating-point number, $\fl(x)$ is the unbiased exponent of $x$. For $\bm{x} \in \mathbb{R}^n$ we naturally extend this notation by writing $\fl(\bm{x})$ to denote the vector with $i$th component $\fl(x_i)$. For $\bm{x} \in \mathbb{R}^n$ write $e_{\max}(\bm{x})$ and $e_{\min}(\bm{x})$ to denote the maximum and minimum value of $\fl$ over all components of $\bm{x}$. These can be defined mathematically as
\begin{align}
    e_{\max}(\bm{x}) := \max_{1 \leq i \leq n} \ \fl(x_i) \ \quad \text{and} \quad \
    e_{\min}(\bm{x}) := \min_{1 \leq i \leq n} \ \fl(x_i).
\end{align}

\section{Error Bounded Approximate Kernels}
\label{sec:error-bounded-ac}
Suppose we are given a kernel described as a function $\bm{k}: \mathcal{D} \to \mathcal{R}$, where the domain and range of $\bm{k}$ are normed vector spaces.
Our goal is to develop a parametric approximation of $\bm{k}$ for which suitable parameter choices result in bounded error approximations to the output of $\bm{k}$. Depending on the complexity of the original kernel $\bm{k}$, the level of parameterization required to achieve bounded error in the approximate kernel could easily become non-trivial. Hence, in order to promote usability of any approximate kernel with guaranteed error bounds, we also propose that they should be accompanied by a function for identifying parameter choices that ensure the error will be bounded at a specified tolerance. 
%\purple{We now mathematically formalize this approach. Note that we will write $\mathcal{P}$ to denote the set of all possible parameter configurations for the approximate kernel.} 

Let $\mathcal{P}$ denote the set of all possible parameter configurations for the approximate kernel, then we can define an approximate kernel $\bm{\tilde{k}}: \mathcal{D} \times \mathcal{P} \to \mathcal{R}$ and a parameter selection function $\Theta: \mathcal{D} \times \mathbb{R} \to  \mathcal{P}$ such that for any input $\bm{X} \in \mathcal{D}$ and approximation error tolerance $\varepsilon > 0$ we have a bound on the relative approximation error given by
\begin{align}
\| \bm{k} (\bm{X}) - \bm{\tilde{k}} (\bm{X}; \Theta (\bm{X}, \varepsilon)) \| 
&\leq \varepsilon \| \bm{k} (\bm{X}) \|. \label{eq:bounded-approx}
\end{align}

\noindent %\red{As we are defining the approximate kernel $\bm{\tilde{k}}$ to be a parameterized function, there are many potential choices for parameter settings to use within the approximate kernel. The parameterization should be defined so that $\bm{\tilde{k}}$ can introduce varying amounts of approximation by choosing different parameter settings. Hence, depending on the use case and the level of approximation an application can handle, certain parameter settings can corrupt an application if too much approximation is introduced. Hence, the function $\Theta$ is responsible for ensuring that the error due to approximation is controllable in any setting.}
It is important to formalize the roles that $\bm{\tilde{k}}$ and $\Theta$ play in achieving approximation in practice. To this end, we establish the following definitions that emphasize the design goals for error-bounded approximate computing kernels and simplify our discussion of them.

\begin{definition}
Let $\bm{k}: \mathcal{D} \to \mathcal{R}$ be a kernel, $\bm{\tilde{k}}: \mathcal{D} \times \mathcal{P} \to \mathcal{R}$ be a parameterized approximation of $\bm{k}$, and $\Theta: \mathcal{D} \times \mathbb{R} \to  \mathcal{P}$ a parameter selection function for $\bm{\tilde{k}}$.
Let $\varepsilon > 0$ and $\bm{X} \in \mathcal{D}$ be given. 

The \textbf{approximation potential} of $\bm{\tilde{k}}$ is how close the relative approximation error can be made to the tolerance $\varepsilon$ for any choice of parameter $\theta \in \mathcal{P}$. When $\| \bm{k}(\bm{X}) \| \neq 0$ the approximation potential can be quantified by
\begin{align}
\varepsilon - \frac{\| \bm{k} (\bm{X}) - \bm{\tilde{k}} (\bm{X}; \theta^*) \|}{\| \bm{k} (\bm{X}) \|} \label{eq:approx-potential}
\end{align} 
where $\theta^*$ is a solution to the optimization problem
\begin{align}
\begin{array}{cc}
\displaystyle \min_{\theta \in \mathcal{P}} & \varepsilon \| \bm{k} (\bm{X}) \| - \| \bm{k} (\bm{X}) - \bm{\tilde{k}} (\bm{x}; \theta) \| \\
\text{s.t.} & \| \bm{k} (\bm{X}) - \bm{\tilde{k}} (\bm{X}; \theta) \| \leq \varepsilon \| \bm{k} (\bm{X}) \|
\end{array} \label{prob:potential}
\end{align}

The \textbf{approximation effectiveness} of $\bm{\tilde{k}}$ and $\Theta$ is how close the relative approximation error is to the tolerance $\varepsilon$ when using the parameter $\Theta(\bm{X}, \varepsilon)$ given by the parameter selection function. When $\| \bm{k}(\bm{X}) \| \neq 0$ the approximation effectiveness can be quantified by
\begin{align}
\varepsilon - \frac{\| \bm{k} (\bm{X}) - \bm{\tilde{k}} (\bm{X}; \Theta (\bm{X}, \varepsilon)) \|}{\| \bm{k} (\bm{X}) \|}. \label{eq:approx-effect}
\end{align} 
If the relative approximation error exceeds the tolerance $\varepsilon$ with the selected parameter $\Theta (\bm{X}, \varepsilon)$, we say that the approximation is ineffective.

The \textbf{approximation efficiency} of $\Theta$ is the ratio of the time spent computing the original kernel, $\bm{k}(\bm{X})$, to the time spent selecting parameters $\Theta (\bm{X}, \varepsilon)$ and running the original kernel. That is,
\begin{align}
    \frac{\text{Asymptotic Runtime of } \bm{k}(\bm{X})}{\text{Asymptotic Runtime of } \Theta(\bm{X}, \varepsilon) + \text{Asymptotic Runtime of } \bm{k}(\bm{X})}. \label{eq:approx-efficiency}
\end{align}
The approximation efficiency takes on values in the interval $(0,1)$. Values close to 0 indicate that more time will be required to identify the parameters (inefficient) while values close to 1 indicate that the time spent identifying parameters is fast compared to evaluating the original kernel (efficient).

The \textbf{approximation speedup} of $\bm{\tilde{k}}$ is the ratio of the time spent computing the original kernel, $\bm{k}(\bm{x})$, to the time spent computing the approximate kernel. That is,
\begin{align}
    \frac{\text{Runtime of } \bm{k}(\bm{X})}{\text{Runtime of } \bm{\tilde{k}}(\bm{X}, \Theta(\bm{X}, \varepsilon))}. \label{eq:approx-speedup}
\end{align}
Note that for this definition we do not include the runtime of $\Theta(\bm{X}, \varepsilon)$ as this is to serve as a measure of the maximum possible speedup achievable by the approximate kernel $\bm{\tilde{k}}$.
\end{definition}
%\purple{With these definitions established, we now highlight how $\bm{\tilde{k}}$ and $\Theta$ affect the approximation potential, effectiveness, efficiency, and speedup of the approximate kernel.} 
%\red{We note that the efficiency and speedup are not designed to be numbers for comparing approximate kernels against each other. Instead, they serve to indicate whether the approximate kernel and parameter selection algorithm are usable in an application at runtime, or if it is more appropriate to use the parameter selection algorithm as a preprocessing tool so that the application will only require computation of the approximate kernel at runtime using the parameters determined from preprocessing.}

The design of the parametric approximate kernel $\bm{\tilde{k}}$, in particular the complexity of the parameterization, 
is solely responsible for determining the approximation potential and approximation speedup. The potential is more of a theoretical value, since an optimal parameter, $\theta^*$, will often be difficult to obtain for most $\bm{\tilde{k}}$. We can think of the approximation potential to be the best case approximation achievable by $\bm{\tilde{k}}$, provided we have unlimited resources to identify the best choice of parameters. The approximation speedup, on the other hand, indicates how fast the computation of $\bm{\tilde{k}}$ is compared to the original kernel, $\bm{k}$. It does not include the cost of selecting parameters, and one can consider this as the speedup in executing $\bm{\tilde{k}}$, if the parameter selection function, $\Theta$, is only used as a preprocessing tool. 
%\purple{In contrast to a measure of global speedup, which would measure the time to select parameters and run the approximate kernel, approximation speedup provides a measure of how much speedup the computation of $\bm{\tilde{k}}$ provides without selecting parameters. This corresponds to the best possible speedup $\bm{\tilde{k}}$ could provide, or the cost of executing $\bm{\tilde{k}}$ if the parameter selection function, $\Theta$, is only used as a preprocessing tool. }
%\red{Hence, approximation speedup acts much like an upper bound on the speedup that $\bm{\tilde{k}}$ can achieve for a given parameter choice, thereby informing whether the approximation technique is capable of reducing execution time of the kernel.}
Values greater than 1 indicate that $\bm{\tilde{k}}$ provides a practical speedup over the original kernel.

On the other hand, the design of the parameter selection function $\Theta$ determines the approximation effectiveness and approximation efficiency. We can interpret the approximation effectiveness to be the level of approximation actually realized by replacing $\bm{k}(\bm{X})$ with $\bm{\tilde{k}} (\bm{X}; \Theta (\bm{X}, \varepsilon))$ in an application. Hence, $\Theta$ has the important role of identifying a parameter choice for computing accurately and tightly bounded error in an application containing the kernel $\bm{k}$.
An approximate kernel will have low approximation efficiency when the asymptotic time to select parameters is much longer than the time to run the original kernel. 
%\red{As the importance of approximation effectiveness and efficiency can be dependent from application to application, the design and choice of an approximate kernel is dependent on the needs of the application.} 
In order to have a error-bounded kernel with high approximation effectiveness, the user will likely have to sacrifice some level of efficiency. The approximation efficiency defined in (\ref{eq:approx-efficiency}) provides an indication of the overhead in the parameter selection phase required for the design of the approximate kernel $\bm{\tilde{k}}$, with respect to the cost of executing the original kernel $\bm{k}$. This overhead can be defined as $\displaystyle \frac{\text{Asymptotic Runtime of } \Theta(\bm{X}, \varepsilon)}{\text{Asymptotic Runtime of } \bm{k}(\bm{X})}$ so that (\ref{eq:approx-efficiency}) is equivalent to
%Note that while we could make the case to include the runtime of $\bm{\tilde{k}} (\bm{x}; \Theta (\bm{x}, \varepsilon))$ in (\ref{eq:approx-efficiency}), this value is designed only to measure the cost of selecting parameters required for the approximate kernel to have bounded error. Thus, approximation efficiency serves to capture the cost of bounding the relative approximation error when running $\bm{\tilde{k}}$ and, hence, $\Theta$ is the determining factor in efficiency of the approximation. The definition of approximation efficiency is motivated by parallel efficiency \cite{grama2003introduction} of a parallel algorithm which measures the fraction of time for which a processor is doing useful work. %While we could define the efficiency as $\displaystyle \frac{\text{Asymptotic Runtime of } \Theta(\bm{x}, \varepsilon)}{\text{Asymptotic Runtime of } \bm{k}(\bm{x})}$, 
%Following from this motivation, the value in (\ref{eq:approx-efficiency}) is equivalent to 
%$\displaystyle \frac{1}{1 + \frac{\text{Asymptotic Runtime of } \Theta(\bm{x}, \varepsilon)}{\text{Asymptotic Runtime of } \bm{k}(\bm{x})}}$ 
$\left(1 + \frac{\text{Asymptotic Runtime of } \Theta(\bm{X}, \varepsilon)}{\text{Asymptotic Runtime of } \bm{k}(\bm{X})} \right)^{-1}$, which is bounded in the interval $(0,1)$. This formulation of approximation efficiency is motivated by the definition of parallel efficiency \cite{grama2003introduction} of a parallel algorithm, which measures the fraction of time for which a processor is doing useful work. Approximation efficiency values close to 1 indicate that the approximate kernel will spend most of the time doing useful work. 
%\red{In particular, the efficiency should be greater than $1/2$ and closer to 1 for error-bounded approximation techniques used at runtime.} 
Efficiency values less than $1/2$ indicate that $\Theta$ will not be performant enough for runtime-based error-bounded approximation. 

The design of the approximate kernel and the parameter selection function go hand-in-hand to form an application that will be effective, efficient, and provide speedup in practice. A well-designed approximate kernel with a poor parameter selection function will result in poor approximation in practice, since the effectiveness of the approximation may not reach the potential. On the other hand, a poorly designed approximate kernel will typically result in ineffective approximation regardless of the design of $\Theta$, since the approximation potential may be limited. Further, it is important to note that an over-parameterized approximate kernel may result in difficulty defining efficient parameter selection functions. 
%\red{These observations provide general guidelines for designing approximate kernels that work well in practice.}

%\red{We now briefly discuss choosing a suitable parameter selection function. Then the choice for $\Theta$ that yields approximation effectiveness equal to approximation potential is taking $\Theta (\bm{X}, \varepsilon)$ to be a global minimizer of problem (\ref{prob:potential}). However, for the majority of kernels worth approximating this definition of $\Theta$ will likely be very inefficient or intractable. As there will often be many approaches that can be used to select parameters, the definition of $\Theta$ should be made in a manner most suitable for the application. That is, the parameter selection function should be designed keeping in mind if the approximation benefits most from effectiveness, efficiency, or a balance of both.}

Based on our theoretical formulation of the problem, we summarize our framework for designing error bounded approximate kernels in two phases:
\begin{enumerate}
    \item \textit{Parameter Selection Phase}: Define a function or algorithm, $\Theta$, that takes a kernel input $\bm{X}$ and an error tolerance $\varepsilon$ and returns approximate kernel parameters that ensure that the relative error of the approximation $\bm{\tilde{k}}$ evaluated on the input $\bm{X}$ is below the tolerance $\varepsilon$. %\red{$\Theta$ should be designed to take into account desired levels of effectiveness and efficiency for the application.}
    \item \textit{Computation Phase}: Use parameters returned by $\Theta$ in approximate kernel with input $\bm{X}$ and error tolerance $\varepsilon$. 
\end{enumerate}
It is clear from this general framework that the cost of ensuring bounded error in the approximate kernel is based on the approximation effectiveness and efficiency of $\Theta$. %and data preprocessing phases. 
%However, without these steps the kernel will be unable to ensure the approximation error is below the user's required tolerance. 
%In light of this, the goal of the information collection %and data preprocessing phases 
%phase should be to collect the minimal amount of meaningful information that can be used to bound the approximation error while still providing benefits from approximation in the computation phase. 

%The parameter selection phase is any process for specifying parameters for the approximate kernel. We note that many parameter selection algorithms are available and have been used in existing approximate computing techniques. In our discussion, we are only interested in parameter selection phases that guarantee the error tolerance is satisfied, whenever possible. This results in designing a parameter selection phase that (a) is input and tolerance dependent and (b) yields a parameter selection for which the approximation error is bounded. 

Before applying this general framework to design an approximate kernel for the dot product, we briefly consider two examples of existing approximate computing techniques and demonstrate how they closely match this framework. First, loop perforation \cite{mittal2016survey} is an approximate computing technique where iterates in a loop are skipped during execution of the code. For a loop of length N, the parameters for this approximate kernel are subsets of $[N]$ that indicate which iterates of the loop should be skipped. Popular choices for these parameters are selecting random subsets of $[N]$ or the last $k$ iterates of the loop independent of the input $\bm{X}$. These choices are often made for efficiency as, in this case, the parameter selection function has asymptotic runtime of $O(1)$, while the original kernel has runtime $O(N)$. Hence, the parameter selection function is very efficient. However, for many inputs the approximation error will be unbounded, making the kernel ineffective. %these choices of parameters for loop perforation cannot ensure a guaranteed bound on the approximation error introduced. 
%\red{As a fitting example, consider loop perforation applied to the dot product given by the kernel $\bm{k} (\bm{x}, \bm{y}) = \sum_{i=1}^N x_i y_i$. For simplicity take $N = 2$ and let $\theta = \{ 2 \}$ denote the set of indices to be skipped from the summation used in computing the dot product. Taking $\bm{x} = [ 1, 1e8]$ it follows that $\bm{k} (\bm{x}, \bm{x}) = 1 + 1e16$ and $\bm{\tilde{k}}(\bm{x}, \bm{x}; \theta) = 1$. Hence, the resulting approximation is ineffective as the error is unbounded and thereby has the potential to be unbounded for additional choices for the input. This toy example demonstrates that a parameter selection function that fails to take information about the input data into account carries the risk of being ineffective for certain inputs.} 
For applications that are error-sensitive, which is the target of this paper, these types of parameter selection functions are infeasible. 
Second, approximate computing strategies such as those based on precision tuning and significance-aware methods can be effective, since they are designed to satisfy some error threshold. However, the parameter selection phase of these strategies can be quite expensive, rendering them inefficient for approximate computing at run time. Hence, they are typically run as an offline analysis tool on a representative set of inputs prior to the actual application run. Examples of techniques that make use of such an approach are Precimonious \cite{gonzalez2013precimonious}, FPTuner \cite{chiang2017rigorous}, and ADAPT \cite{menon2018}. %With respect to loop perforation, loop perforation space exploration techniques exist which can provide guidance on selecting appropriate loop perforation parameters \cite{rinard2011managing}, however, they may not be suitable for HPC applications. 

By framing the error-bounded approximate kernels in this two phase approach, it also provides a clear technique for making use of existing unbounded approximate computing methods. In particular, to make unbounded methods feasible in error sensitive applications, we simply need to replace the existing parameter selection function with one that ensures the approximation error is bounded. We now use this general framework to design and test an error bounded approximate kernel for the dot product. 

\section{QDOT: Quantized Dot Product Kernel} \label{sec:q-dot}
%\purple{In this section, we provide motivation and pseudocode for the approximate dot product kernel \textbf{qdot}. Our primary goal in the design of \textbf{qdot} is to develop an approximate kernel that is effective. In this work, we are interested in understanding how much approximation can be introduced into scientific computing applications without compromising the fidelity of the quantities of interest computed by the application. }
The design of the approximate dot product kernel, \textbf{qdot}, is motivated by our interest in understanding how much approximation can be tolerated by scientific computing applications, without compromising the fidelity of the quantities of interest computed by the application. Enabling easy integration of \textbf{qdot} into existing applications and providing tuneable error bounds, allows for easy evaluation of the approximate kernel in various scientific computing applications. To this end, our primary goal in the design of \textbf{qdot} is to develop an approximate kernel that is effective.
%\green{, with approximation efficiency as a secondary goal}.
%\purple{Since the dot product is a very low level kernel in many scientific applications, this approximate kernel provides an investigation and insight into how much approximation can be achieved by approximating a low level operation in scientific applications. By making \textbf{qdot} easy to integrate into existing applications and providing tuneable error bounds will allow for easy testing in various scientific computing applications. As it is unclear a priori how much approximation can be introduced into this low level kernel in HPC application, the design of \textbf{qdot} will emphasize approximation effectiveness. By focusing on approximation effectiveness, we can consider various levels of approximation and ascertain the sensitivity of each application to approximation. Thus, for this work, approximation efficiency is a secondary goal.} 
If the application can handle large amounts of approximation, then \textbf{qdot} can be used as starting point to develop a more efficient approach.
%We focus on approximation effectiveness for two reasons. Firstly, we wish to determine the impact of approximating low level kernels like the dot product, on scientific applications that rely on such kernels. First, as the dot product is a very low level kernel, we want to see how much approximation can be achieved by approximating a low-level operation in scientific computing applications. Secondly, we are unsure how much approximation can be introduced into scientific computing applications without negative effects. In particular, it is not clear if even small levels of approximation introduced by dot product operations will effectively break our error-sensitive applications. As such, making use of an approximate kernel with high approximation effectiveness will allow us to test tighter levels of approximation in our applications and identify their sensitivity. As such, efficiency is secondary goal. If the applications can handle large amounts of approximation then \textbf{qdot} can be used as starting point to develop a more efficient approach. 

\textbf{qdot} can be described as a combination of variable precision computing and loop perforation, applied to a full precision dot product kernel. At a high-level, \textbf{qdot} takes the vectors $\bm{x}, \bm{y} \in \mathbb{R}^n$ and partitions $\bm{x}$ and $\bm{y}$ into sub-vectors of different precisions. Then, for each sub-vector we compute the dot product in its corresponding precision. The dot products from each sub-vector are then accumulated in the same precision as the input, resulting in an approximation of the full-precision dot product kernel. 
Hence, our approximate algorithm requires two sets of parameters:
\begin{enumerate}
    \item Disjoint bins $\{ B_k \}_{k = 1}^p$ specifying indices for $p$ sub-vectors of $\bm{x}$ and $\bm{y}$, where $\displaystyle \bigcup_{k=1}^p B_k = [n]$.
    \item Bin precisions $\{ \pi_k \}_{k = 1}^p$, where $\pi_k$ specifies the precision for the sub-vectors $\bm{x}_{B_k}$ and $\bm{y}_{B_k}$.
\end{enumerate}
For our implementation, the precisions $\pi_k$ can take on IEEE Floating Point Precisions \hpr, \spr, \dpr, \qpr \ as well as \ppr. Note that \ppr \ denotes all indices that are to be excluded from the computation of the quantized dot product. In essence, we can consider \ppr \ to be components that have been quantized to a 0-bit representation.  Hence, a parameter selection algorithm for \textbf{qdot} should identify bins and the associated precisions for each bin, so that the approximate dot product has bounded error. 
%\red{From a theoretical standpoint, the parameter selection algorithm could identify five (possibly empty) sub-vectors each corresponding to one of the five precisions. However, there are practical considerations that arise during an implementation when converting information from high to low precision (e.g., overflow, underflow) which are necessary to account for in the computation phase of \textbf{qdot}. We take this into account in the design of our parameter selection algorithm $\Theta$ so that the bins and precisions identified by $\Theta$ are used directly by the computation phase.}

%\purple{\textbf{qdot} is implemented using C++ and CUDA. The parameter selection and computation phases both run on a GPU to take advantage of parallelism and \hpr \ precision in hardware. Many steps of the parameter selection phase, as well as the entire computation phase, are amenable to parallelization. In what follows, we present a detailed discussion of each of these phases.}
%\purple{Since the parameter selection algorithm is dependent on the parameterization of the approximate kernel, in what follows, we first provide motivation and pseudocode for the computation phase of \textbf{qdot} in~\secref{sec:comp-phase}. We then provide pseudocode for the parameter selection algorithm in~\secref{sec:param-phase} and a discussion on its approximation effectiveness and efficiency. Finally, in~\secref{sec:qdot-full-alg} we provide details on the implementation of \textbf{qdot}.}

\subsection{Computation Phase} \label{sec:comp-phase}
%To reduce time spent in the information collection phase, we introduce a strategy called \emph{exponent based binning}. Before providing an outline for this approach we establish some definitions and notation that will be useful in our discussion and analysis.

Given vectors $\bm{x}, \bm{y} \in \mathbb{R}^n$, the objective of \textbf{qdot} is to approximate the value of $\bm{x}^{\intercal} \bm{y}$. The design of \textbf{qdot} was motivated by loop perforation but refined to take advantage of other levels of approximation that loop perforation does not account for. For example, given $\bm{x}$ and $\bm{y}$ in \dpr \ precision, loop perforation can only select indices to exclude in the approximation of $\bm{x}^{\intercal} \bm{y}$. The use of only perforation as an option to introduce approximation will likely result in a kernel that has limited approximation potential in instances where a small approximation error tolerance $\varepsilon$ is required by the application. However, we can improve the approximation potential in these instances by including intermediate levels of approximation between \dpr \ and \ppr \ by determining component-wise products that can be computed at \spr \ and \hpr \ precisions, together with component-wise products that can be perforated. To see this, consider the process of computing all of the component-wise products in the dot product before accumulating the values. Let $\bm{z} = \bm{x} \odot \bm{y}$, where each component of $\bm{z}$ is computed at the input precision. Then 
all components have the same number of mantissa bits, say $\mu$, but potentially varying exponent values given by $\fl(\bm{z})$. %For simplicity, supposing each $\bm{z} \geq \bm{0}$ 
Now, whenever $\fl(z_j) < e_{\max}(\bm{z})$, the component $z_j$ is a candidate for reduced precision or perforation since the least significant mantissa bits in $z_j$ may contribute information outside of the required tolerance. Thus, perforation is suitable only when the dynamic range of the exponents of the entries in $\bm{z}$ is sufficiently large, or a large tolerance, $\varepsilon$, is acceptable. \spr \ and \hpr \, however, allow the introduction of approximation when the dynamic range is smaller, or a smaller value of $\varepsilon$ is required. 

The computation phase of \qdot \ computes the dot product of disjoint sub-vectors of the input arrays $\bm{x}$ and $\bm{y}$ at some specified level of precision. The bins representing the disjoint sub-vectors, and the precision of each sub-vector, are prescribed by the parameter selection algorithm, and passed as input to the computation phase. 
%\purple{These parameters are provided as inputs to the computation phase and are identified based on the input arrays and a specified tolerance $\varepsilon$.} 
Pseudocode for the computation phase of \textbf{qdot} is provided in Algorithm~\ref{alg:qdot-comp}. In order to simplify the presentation of the error analysis for Algorithm~\ref{alg:qdot-comp} and to motivate suitable choices for parameters that will result in bounded error, we provide the following definition:
\begin{definition} \label{def:exp-bin}
Let $\bm{x} \in \mathbb{R}^n$. An \textbf{$(\ell,u)$-exponent bin of $\bm{x}$}, denoted $B_{\ell, u} (\bm{x})$, is the set of indices %$i$ such that $\fl(x_i) \in (\ell, u]$, for some $\ell, u \in \mathbb{Z}$ with $\ell < u$. 
$B_{\ell, u} (\bm{x}) := \left\{ i \in [n] : \ell < \fl(x_i) \leq u \right\}$. 
%\label{eq:exp-bin}
\end{definition}
We now establish a bound on the approximation error introduced by Algorithm~\ref{alg:qdot-comp}.

\begin{center}
\begin{algorithm}[t]
\begin{algorithmic}[1]
\STATE{\textit{Inputs}: Arrays $\bm{x}, \bm{y} \in \mathbb{R}^n$, parameters $\theta = \{ B_k, \pi_k \}_{k = 1}^p$, where $B_k$ contains indices for sub-vector $k$ and $\pi_k$ denotes the precision of bin $B_k$.}
\STATE{\textit{Output}: Approximate dot product $z$}
\FOR{$k = 1$ to $p$}
\STATE{$(\bm{x}_k, \bm{y}_k) \gets Precision_{\pi_k} (\bm{x}_{B_k}, \bm{y}_{B_k})$ \hfill Initialize sub-vectors with precision $\pi_k$}
\STATE{$\displaystyle z_{k} \gets \bm{x}_k^{\intercal} \bm{y}_k$ \hfill Compute dot product of sub-vector $k$ at precision $\pi_k$ \ }
\ENDFOR
\STATE{$\displaystyle z \gets \sum_{k = 1}^{p} z_k$, where addition is at precision of $\bm{x}$ and $\bm{y}$ \hfill Accumulate across sub-vectors \ }
\end{algorithmic}
\caption{\textbf{qdot} Computation Phase}
\label{alg:qdot-comp}
\end{algorithm}
\end{center}

\iffalse %%%%%%%%%%%%%%%%%%%%%%%%%%%%%%%
In order to allow for the error analysis to generalize across any choice of a binning algorithm, we specify an assumption that a binning algorithm must satisfy in order for our error bound analysis to hold. 
\begin{assumption} \label{assmp:partition}
    Let $\bm{x}, \bm{y} \in \mathbb{R}^n$. A partition of the interval $[e_{\min}, e_{\max}]$ satisfies the following condition:
    \begin{itemize}
        \item[$(a)$] Cover all possible exponent values: $\displaystyle [e_{\min}, e_{\max}] \subseteq \bigcup_{k=1}^{N_{x,y}} (\ell_k, u_k]$.
        %\item[$(b)$] Pairwise disjoint intervals: $\displaystyle (\ell_i, u_i] \cap (\ell_j, u_j] = \emptyset$, for all $i \neq j$.
    \end{itemize}
\end{assumption}
\fi %%%%%%%%%%%%%%%%%%%%%%%%%%%%%%%
%In order to simplify the analysis, we avoid accumulating the error using floating point addition. %We are aware that this adds an additional amount of approximation that can be found in existing literature [\textcolor{red}{ADD REFERENCE}]. \textcolor{red}{Note that it might be worth just using some existing results to bound our error more accurately. Consider doing this for completeness.}

\begin{theorem} \label{thm:bin-dot-kernel}
Suppose $\bm{x}, \bm{y}$ are floating point arrays of size $n$ with machine epsilon $\varepsilon_{machine}$. Let $\{ (\ell_k, u_k] \}_{k \geq 1}$ be a partition of $[e_{\min}(\bm{x} \odot \bm{y}), e_{\max}(\bm{x} \odot \bm{y})]$ %satisfying Assumption~\ref{assmp:partition} 
and let $\theta = \{ B_{\ell_k, u_k} (\bm{x} \odot \bm{y}), \pi_k \}_{k = 1}^p$ denote parameters for Algorithm~\ref{alg:qdot-comp}. If $\bm{\tilde{k}}(\bm{x}, \bm{y}; \theta)$ is the output of Algorithm~\ref{alg:qdot-comp} then 
\begin{align}
\left| \bm{x}^{\intercal} \bm{y} - \bm{\tilde{k}}(\bm{x}, \bm{y}; \theta) \right|
&\leq \sum_{k = 1}^{p} M_k 2^{u_k+1} \varepsilon (k), \label{result:bin-dot-kernel.1}
\end{align}
where $\varepsilon(k)$ denotes the machine precision for precision $\pi_k$ and $\bsize$ denotes the cardinality of bin $B_{\ell_k,u_k} (\bm{x} \odot \bm{y})$, for $k \in [p]$.
\end{theorem}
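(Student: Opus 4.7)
The plan is to reduce the global error bound to a sum of per-bin errors and then bound each per-bin error by combining the exponent-range restriction implicit in $B_{\ell_k, u_k}(\bm{x}\odot\bm{y})$ with a standard floating-point error analysis of a length-$M_k$ inner product carried out in precision $\pi_k$.

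First, because $\{(\ell_k, u_k]\}_{k=1}^{p}$ partitions $[e_{\min}(\bm{x}\odot\bm{y}), e_{\max}(\bm{x}\odot\bm{y})]$, Definition~\ref{def:exp-bin} guarantees that the sets $B_k := B_{\ell_k, u_k}(\bm{x}\odot\bm{y})$ are pairwise disjoint and cover $[n]$. Hence the exact dot product splits as $\bm{x}^{\intercal}\bm{y} = \sum_{k=1}^{p}\sum_{i\in B_k} x_i y_i$, and Algorithm~\ref{alg:qdot-comp} produces $\bm{\tilde{k}}(\bm{x},\bm{y};\theta) = \sum_{k=1}^{p} z_k$. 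A single application of the triangle inequality gives
$$\bigl|\bm{x}^{\intercal}\bm{y} - \bm{\tilde{k}}(\bm{x},\bm{y};\theta)\bigr| \;\le\; \sum_{k=1}^{p}\Bigl|\,z_k - \sum_{i\in B_k} x_i y_i\Bigr|,$$
so it suffices to bound each summand. The final accumulation on line 7 is performed in the input precision (the highest among the $\pi_k$), so its rounding contribution is of a smaller order than those already accounted for per bin and may be absorbed.

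Second, I would use the definition of $\fl$ in (\ref{def:exp2}) to show that $i \in B_k$ implies $|x_i y_i| < 2^{u_k+1}$. In both branches of (\ref{def:exp2}), the condition $\fl(x_i y_i) \le u_k$ forces $\log_2 |x_i y_i| < u_k + 1$, which yields the uniform componentwise magnitude bound on the bin. Consequently,
$$\sum_{i \in B_k} |x_i y_i| \;<\; M_k \cdot 2^{u_k+1}.$$

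Third, I would invoke the standard textbook error analysis for the floating-point inner product in precision $\pi_k$ (incorporating the cast $Precision_{\pi_k}$ on line 4 together with the dot product on line 5): this produces, to first order in $\varepsilon(k)$,
$$\Bigl|\,z_k - \sum_{i\in B_k} x_i y_i\,\Bigr| \;\le\; \varepsilon(k)\sum_{i\in B_k}|x_i y_i| \cdot (1+\text{lower order})$$
with a small multiplicative constant that I would fold in consistently with the paper's conventions. Combining this with the magnitude bound from the previous step gives the per-bin estimate $M_k \cdot 2^{u_k+1}\varepsilon(k)$, and summing over $k$ yields (\ref{result:bin-dot-kernel.1}).

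The main obstacle is the bookkeeping in the per-bin floating-point analysis: three distinct sources of rounding error appear (the cast to precision $\pi_k$, the inner product accumulated at $\pi_k$, and the cross-bin accumulation at the input precision), and the clean bound in (\ref{result:bin-dot-kernel.1}) contains no $(M_k+2)$ or $\gamma_{M_k}$ factors or higher-order $\varepsilon(k)^2$ remainders. Justifying that these constants and higher-order terms can be absorbed into the stated first-order bound — and in particular arguing that $M_k$ (rather than $M_k+c$) suffices — is the delicate step; everything else is essentially a bookkeeping reduction to per-bin estimates via the exponent-bin structure.
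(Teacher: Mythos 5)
Your decomposition and your two key estimates --- splitting $\bm{x}^{\intercal}\bm{y}$ over the exponent bins and deriving $|x_j y_j| < 2^{u_k+1}$ for $j \in B_{\ell_k,u_k}(\bm{x}\odot\bm{y})$ from Definition~\ref{def:exp-bin} --- are exactly the paper's argument, so the approach is essentially the same. The one substantive difference is your third step. Where you invoke the full textbook inner-product analysis (treating the cast to $\pi_k$, the within-bin accumulation, and the cross-bin accumulation as separate rounding sources, with the attendant $\gamma_{M_k}$-type constants), the paper adopts a much simpler error model: it applies the fundamental axiom of floating point arithmetic once per componentwise product, writing $x_j y_j - x_j \otimes y_j = -x_j y_j \varepsilon_j$ with $|\varepsilon_j| \leq \varepsilon(k)$, then asserts that the entire kernel error equals $-\sum_{k}\sum_{j} x_j y_j \varepsilon_j$ --- i.e., every summation, both within bins and across bins in line 7 of Algorithm~\ref{alg:qdot-comp}, is treated as exact. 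Consequently the ``delicate step'' you flag at the end (absorbing the $(M_k + c)$ factors and higher-order $\varepsilon(k)^2$ remainders into the clean bound $M_k 2^{u_k+1}\varepsilon(k)$) is not resolved in the paper either; it is sidestepped by the choice of error model. Under that model your proof and the paper's coincide; under the fully rigorous accounting you describe, the stated constant would indeed have to grow roughly like $\gamma_{M_k}$ rather than $M_k\,\varepsilon(k)$, so your instinct that this is where the real work lies is correct --- you have, if anything, identified a point where the paper's proof is less careful than your proposal.
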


\begin{proof}
Let $\bm{z} = \bm{x} \odot \bm{y}$. As $\{ (\ell_k, u_k] \}_{k = 1}^p$ is a partition of $[e_{\min}(\bm{z}), e_{\max}(\bm{z})]$, we have that
\begin{align}
\bm{x}^{\intercal} \bm{y}
&= \sum_{i=1}^n x_i y_i
= \sum_{k=1}^{p} \sum_{j \in B_{\ell_k,u_k} (\bm{z})} x_j y_j. \label{eq:bin-dot-kernel.1}
\end{align}
We momentarily consider each inner summation in (\ref{eq:bin-dot-kernel.1}). Let $\varepsilon (k)$ denote machine epsilon that will be used for the components at the indices in bin $B_{\ell_k,u_k} (\bm{z})$, for $k \in [p]$, where we require each $\varepsilon (k) \geq \varepsilon_{machine}$. By the fundamental axiom of floating point arithmetic \cite{trefethen97} and our choice of $\varepsilon (k) \geq \varepsilon_{machine}$, we have that
$x_j y_j - x_j \otimes y_j = - x_j y_j \varepsilon$, for $j \in B_{\ell_k,u_k} (\bm{z})$ and some $|\varepsilon| \leq \varepsilon (k)$. 
%\label{eq:bin-dot-kernel.2}
By definition of Algorithm~\ref{alg:qdot-comp}, it now follows that
\begin{align}
\bm{x}^{\intercal} \bm{y} - \bm{\tilde{k}}( \bm{x}, \bm{y}; \theta )
&= - \sum_{k=1}^{p} \sum_{j \in B_{\ell_k,u_k} (\bm{z})} x_j y_j \varepsilon_j, \label{eq:bin-dot-kernel.3}
\end{align}
where $| \varepsilon_j | \leq \varepsilon(k)$ for $j \in B_{\ell_k,u_k} (\bm{z})$. By definition of $B_{\ell_k,u_k} (\bm{z})$, we have that
$\left| x_j y_j \right| < 2^{u_k+1}$, for $j \in B_{\ell_k,u_k} (\bm{z})$, %\label{eq:bin-dot-kernel.4} 
which combined with (\ref{eq:bin-dot-kernel.1}) and (\ref{eq:bin-dot-kernel.3}) yields
\begin{align}
\left| \bm{x}^{\intercal} \bm{y} - \bm{\tilde{k}}( \bm{x}, \bm{y}; \theta ) \right|
&\leq \sum_{k=1}^{p} \sum_{j \in B_{\ell_k,u_k} (\bm{z})} | x_j y_j | | \varepsilon_j | 
\leq \sum_{k=1}^{p} \sum_{j \in B_{\ell_k,u_k} (\bm{z})} 2^{u_k+1} \varepsilon (k) 
\leq \sum_{k=1}^{p} \bsize 2^{u_k+1} \varepsilon (k), \label{eq:bin-dot-kernel.5}
\end{align}
which is the desired result, (\ref{result:bin-dot-kernel.1}).
\end{proof}

Theorem~\ref{thm:bin-dot-kernel} establishes that the use of $(\ell,u)$-exponent bins for the bin parameters can be used to formulate a closed-form bound on the absolute approximation error. Further, the error is dependent on the size, maximum exponent, and precision of each $(\ell,u)$-exponent bin. This allows us to limit the bin parameter search space for the parameter selection algorithm to only $(\ell,u)$-exponent bins. By including one additional assumption, we can use this result to establish a relative error bound for Algorithm~\ref{alg:qdot-comp}.

\begin{theorem} \label{thm:rel-bin-dot-kernel}
Let $\bm{x}$, $\bm{y}$, and $\theta$ be as in Theorem~\ref{thm:bin-dot-kernel}. Suppose that $e_{\max} (\bm{x} \odot \bm{y}) \leq \fl \left( \bm{x}^{\intercal} \bm{y} \right)$ and that $\{ (\ell_k, u_k] \}_{k = 1}^p$ is a partition of $[e_{\min}(\bm{x} \odot \bm{y}), e_{\max}(\bm{x} \odot \bm{y})]$. If $\bm{\tilde{k}}(\bm{x}, \bm{y}; \theta)$ is the output of Algorithm~\ref{alg:qdot-comp} then we have the relative error bound
\begin{align}
\left| \bm{x}^{\intercal} \bm{y} - \bm{\tilde{k}}(\bm{x}, \bm{y}; \theta) \right|
&\leq \left| \bm{x}^{\intercal} \bm{y}  \right| \sum_{k=1}^{p} \bsize 2^{u_k-e_{\max}+1} \varepsilon (k),   \label{result:bin-dot-kernel.2}
\end{align}
where $\varepsilon(k)$ denotes the machine precision for bin precision $\pi_k$ and $\bsize$ denotes the cardinality of bin $B_{\ell_k,u_k} (\bm{x} \odot \bm{y})$, for $k \in [p]$.
\end{theorem}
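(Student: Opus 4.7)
The plan is to obtain the relative error bound by post-processing the absolute error bound of Theorem~\ref{thm:bin-dot-kernel} using the additional hypothesis on $e_{\max}(\bm{x} \odot \bm{y})$. The essential observation is that the bound in (\ref{result:bin-dot-kernel.1}) depends on the upper bin endpoints only through the factor $2^{u_k+1}$, so introducing an artificial factor $2^{-e_{\max}} \cdot 2^{e_{\max}}$ will convert this into the desired $2^{u_k - e_{\max} + 1}$ (which is the quantity appearing in (\ref{result:bin-dot-kernel.2})) times a residual $2^{e_{\max}}$. All that remains is to bound this residual by $|\bm{x}^{\intercal} \bm{y}|$.

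Concretely, first I would invoke Theorem~\ref{thm:bin-dot-kernel} verbatim (its hypotheses are identical to those carried forward into Theorem~\ref{thm:rel-bin-dot-kernel}) to obtain
\begin{align*}
\left| \bm{x}^{\intercal} \bm{y} - \bm{\tilde{k}}(\bm{x}, \bm{y}; \theta) \right|
&\leq \sum_{k=1}^{p} M_k 2^{u_k+1} \varepsilon(k)
= 2^{e_{\max}(\bm{x} \odot \bm{y})} \sum_{k=1}^{p} M_k 2^{u_k - e_{\max}(\bm{x} \odot \bm{y}) + 1} \varepsilon(k).
\end{align*}
Then I would show that the standing hypothesis $e_{\max}(\bm{x} \odot \bm{y}) \leq \fl(\bm{x}^{\intercal} \bm{y})$ implies $2^{e_{\max}(\bm{x} \odot \bm{y})} \leq |\bm{x}^{\intercal} \bm{y}|$. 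Substituting this into the display above yields exactly (\ref{result:bin-dot-kernel.2}).

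The main (and really the only) technical step is justifying $2^{\fl(\bm{x}^{\intercal} \bm{y})} \leq |\bm{x}^{\intercal} \bm{y}|$. This comes directly from the definition (\ref{def:exp2}): $\fl$ returns the unbiased exponent of its argument, so if we write any nonzero floating-point $s$ as $s = \pm m \cdot 2^e$ with $1 \leq m < 2$, then $\fl(s) = e$ and $|s| = m \cdot 2^e \geq 2^e = 2^{\fl(s)}$. Equivalently, in the regime $|s| \geq 1$ we have $\fl(s) = \lfloor \log_2 |s| \rfloor \leq \log_2 |s|$, and for $|s| < 1$ the unbiased exponent $e$ still satisfies $2^e \leq |s|$. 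Applying this with $s = \bm{x}^{\intercal} \bm{y}$ and monotonicity of $t \mapsto 2^t$ then gives $2^{e_{\max}(\bm{x} \odot \bm{y})} \leq 2^{\fl(\bm{x}^{\intercal} \bm{y})} \leq |\bm{x}^{\intercal} \bm{y}|$, completing the argument.

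I expect no real obstacle beyond verifying the $\fl$-inequality; everything else is a one-line algebraic rearrangement of Theorem~\ref{thm:bin-dot-kernel}. The hypothesis on $e_{\max}$ is exactly what is needed to pass from absolute to relative error, and the proof will be correspondingly short.
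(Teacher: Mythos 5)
Your proposal is correct and follows essentially the same route as the paper: both convert the absolute bound of Theorem~\ref{thm:bin-dot-kernel} into a relative one using $2^{e_{\max}(\bm{x}\odot\bm{y})} \leq 2^{\fl(\bm{x}^{\intercal}\bm{y})} \leq |\bm{x}^{\intercal}\bm{y}|$, the paper merely factoring out $2^{\fl(\bm{x}^{\intercal}\bm{y})}$ where you factor out $2^{e_{\max}(\bm{x}\odot\bm{y})}$, which is immaterial. Your justification of $2^{\fl(s)} \leq |s|$ via the unbiased-exponent interpretation is in fact slightly more careful than the paper, which simply asserts $|\bm{x}^{\intercal}\bm{y}| \geq 2^{\fl(\bm{x}^{\intercal}\bm{y})}$ without comment.
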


\begin{proof}
Let $E = \fl \left( \bm{x}^{\intercal} \bm{y} \right)$. By our hypothesis that $e_{\max} := e_{\max} (\bm{x} \odot \bm{y}) \leq E$, it follows that $2^{-E} \leq 2^{-e_{\max}}$ which together with the property that $| \bm{x}^{\intercal} \bm{y} | \geq 2^{E}$ yields
\begin{align}
\sum_{k=1}^{p} \bsize 2^{u_k+1} \varepsilon (k)
&= 2^E \sum_{k=1}^{p} \bsize 2^{u_k-E+1} \varepsilon (k)
%&\leq \left| \langle \bm{x}, \bm{y} \rangle \right| \sum_{\substack{k \leq M \\ k \equiv M mod \ell}} \left| B_{k,\ell} (\bm{x}, \bm{y}) \right| 2^{k-E+1} \varepsilon (k). \nonumber \\
\leq \left| \bm{x}^{\intercal} \bm{y} \right| \sum_{k=1}^{p} \bsize 2^{u_k-e_{\max}+1} \varepsilon (k). \label{eq:bin-dot-kernel.6}
\end{align}
Combining (\ref{eq:bin-dot-kernel.6}) with (\ref{eq:bin-dot-kernel.5}) yields the desired result (\ref{result:bin-dot-kernel.2}).
\end{proof}

Theorem~\ref{thm:rel-bin-dot-kernel} provides a way forward for designing the parameter selection phase for \textbf{qdot}. In particular, the parameter selection phase will need to determine the appropriate precisions for each exponent bin that results in the summation in (\ref{result:bin-dot-kernel.2}) being less than the required tolerance. The only practical issue with this result is the required hypothesis $e_{\max} (\bm{x} \odot \bm{y}) \leq \fl \left( \bm{x}^{\intercal} \bm{y} \right)$. Of course, this will hold when computing the norm $\| \bm{x} \|_2^2 = \bm{x}^{\intercal} \bm{x}$ as all of the componentwise products will be nonnegative. 
%\red{One could establish necessary conditions under which this hypothesis holds but any such conditions will likely require additional checks on the sign and exponent of the componentwise products. However, verifying these necessary conditions in an implementation would result in a less efficient parameter selection phase for \qdot.}
For completeness, we provide a relative error bound for when the condition $e_{\max} (\bm{x} \odot \bm{y}) > \fl \left( \bm{x}^{\intercal} \bm{y} \right)$ is satisfied. However, unsurprisingly, this result requires knowing the value of $\fl \left( \bm{x}^{\intercal} \bm{y} \right)$ to compute the error bound exactly. 
%\red{Note that this result follows from the same approach used in Theorem~\ref{thm:rel-bin-dot-kernel}.}
\begin{theorem} \label{thm:rel-bin-dot-kernel-E}
Let $\bm{x}$, $\bm{y}$, and $\theta$ be as in Theorem~\ref{thm:bin-dot-kernel} and $E = \fl \left( \bm{x}^{\intercal} \bm{y} \right)$. If $\{ (\ell_k, u_k] \}_{k = 1}^p$ is a partition of $[e_{\min}(\bm{x} \odot \bm{y}), e_{\max}(\bm{x} \odot \bm{y})]$ and $e_{\max} (\bm{x} \odot \bm{y}) > E$. Then we have the relative error bound
\begin{align}
\left| \bm{x}^{\intercal} \bm{y} - \bm{\tilde{k}}( \bm{x}, \bm{y}; \theta ) \right|
&\leq \left| \bm{x}^{\intercal} \bm{y}  \right| \sum_{k=1}^{p} \bsize 2^{u_k-E+1} \varepsilon (k),   \label{result:bin-dot-kernel-E}
\end{align}
where $\varepsilon(k)$ denotes the machine precision for precision $\pi_k$ and $\bsize$ denotes the cardinality of bin $B_{\ell_k,u_k} (\bm{x} \odot \bm{y})$, for $k \in [p]$.
\end{theorem}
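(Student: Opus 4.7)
The plan is to mirror the proof of Theorem~\ref{thm:rel-bin-dot-kernel} almost verbatim, with one step removed. Recall that in the previous proof the chain ran through $\sum_k M_k\, 2^{u_k+1}\,\varepsilon(k) = 2^E \sum_k M_k\, 2^{u_k-E+1}\,\varepsilon(k)$ and then used two facts in sequence: (i) $|\bm{x}^{\intercal}\bm{y}| \geq 2^E$, and (ii) $2^{u_k - E} \leq 2^{u_k - e_{\max}}$, the latter being the place where the hypothesis $e_{\max}(\bm{x}\odot\bm{y}) \leq E$ was invoked. In the present regime $e_{\max}(\bm{x}\odot\bm{y}) > E$, step (ii) would run the wrong way, so I would simply stop after (i) and leave $E$ in the exponent, which is precisely the form of \eqref{result:bin-dot-kernel-E}.

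Concretely, I would start from inequality \eqref{eq:bin-dot-kernel.5} supplied by Theorem~\ref{thm:bin-dot-kernel} (which holds with no assumption on the relation between $E$ and $e_{\max}(\bm{x}\odot\bm{y})$), factor $2^E$ out of each summand by writing $2^{u_k+1} = 2^{E}\cdot 2^{u_k-E+1}$, and then bound $2^E \leq |\bm{x}^{\intercal}\bm{y}|$ using the defining property of $\fl$. Substituting this inequality into each term of the sum gives the claimed relative bound in one line; no new machinery is required beyond what was already developed for Theorem~\ref{thm:rel-bin-dot-kernel}.

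There is essentially no hard step: the absolute error bound from Theorem~\ref{thm:bin-dot-kernel} does all the work, and the passage to a relative bound is algebraic. The only subtle point I would flag is that the hypothesis $e_{\max}(\bm{x}\odot\bm{y}) > E$ is not actually consumed anywhere in the derivation; it merely demarcates the complementary case to Theorem~\ref{thm:rel-bin-dot-kernel} so that, taken together, the two theorems cover every magnitude configuration of $\bm{x}^{\intercal}\bm{y}$. I would mention this explicitly after the proof, noting that the resulting bound is looser than the one in Theorem~\ref{thm:rel-bin-dot-kernel} (since $u_k - E > u_k - e_{\max}$ in this regime) and that evaluating it requires \emph{a priori} knowledge of $E = \fl(\bm{x}^{\intercal}\bm{y})$, which is the practical downside already flagged in the paragraph preceding the theorem statement.
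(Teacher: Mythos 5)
Your proposal is correct; the paper in fact states Theorem~\ref{thm:rel-bin-dot-kernel-E} without proof, and your argument is exactly the natural one: take \eqref{eq:bin-dot-kernel.5}, write $2^{u_k+1}=2^{E}\cdot 2^{u_k-E+1}$, and use $2^{E}\leq\left|\bm{x}^{\intercal}\bm{y}\right|$ --- the same first step as the paper's proof of Theorem~\ref{thm:rel-bin-dot-kernel}, simply stopping before the exponent is relaxed from $E$ to $e_{\max}$. Your side observation is also right: the hypothesis $e_{\max}(\bm{x}\odot\bm{y})>E$ is never consumed, so the bound \eqref{result:bin-dot-kernel-E} holds in either regime and merely delineates the case complementary to Theorem~\ref{thm:rel-bin-dot-kernel}, at the cost of requiring $E$ a priori.
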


%\green{A slightly modified, yet efficient, strategy can still be used to perform the error bounded approximation in cases where the hypothesis of Theorem~\ref{thm:rel-bin-dot-kernel-E} might be satisfied. In particular, by computing the largest exponent bins first then checking the resulting exponent, the approximate kernel will have a reliable estimate for $E$, say $\tilde{E}$, without computing the full kernel. If $\tilde{E} \geq e_{\max}$ then the computation can proceed with the pre-identified precisions for each bin. If $\tilde{E} < e_{\max}$ then the precisions identified for the remaining bins will need to be recalculated based on the estimate $\tilde{E}$ for $E$. We note that in our experiments using \textbf{qdot} within applications considered in here, we never encountered this issue. }

\subsection{Parameter Selection Phase} \label{sec:param-phase}
Since we are interested in approximation effectiveness in this work, the goal of the parameter selection agorithm is to identify the lowest precision that each component of the input arrays can be quantized to, while ensuring the approximation error is below the specified tolerance $\varepsilon$. 
%\red{Once we have determined how to accomplish this theoretically, we will design an algorithm for selecting the required parameters as efficiently as possible. Since we will be working with IEEE floating point representations in our implementation of \textbf{qdot}, we have limited the quantization levels in our discussion to \hpr, \spr, \dpr, and \qpr \ precision. Additionally, as mentioned earlier, we include \ppr \ as a level of quantization to indicate components that can be excluded from the computation of \textbf{qdot} while still satisfying the error tolerance. In essence, we can consider the perforated entries to be components that have been quantized to a 0-bit representation.} 
Based on Algorithm~\ref{alg:qdot-comp} and the error analysis in Section~\ref{sec:comp-phase}, the key to identifying the appropriate level of precision for each component can be achieved by partitioning the interval $[e_{\min}(\bm{x} \odot \bm{y}), e_{\max}(\bm{x} \odot \bm{y})]$ into $p$ disjoint intervals $\left\{ (\ell_k, u_k] \right\}_{k=1}^p$ and binning the indices in $(\ell_k,u_k)$-exponent bins. By using the size and maximum exponent of each exponent bin $B_{\ell_k, u_k}$, we can determine the highest level of quantization that can be applied to the indices in each bin without violating the error tolerance requirement. 

Suppose that an application requiring the dot product can effectively utilize an approximate dot product algorithm satisfying a relative error tolerance of $\varepsilon$. Then using (\ref{result:bin-dot-kernel.2}) in Theorem~\ref{thm:rel-bin-dot-kernel}, our goal would be to choose $\varepsilon(k)$, for $k \in [p]$, such that
\begin{align}
\sum_{k=1}^{p} M_k 2^{u_k-e_{\max}+1} \varepsilon (k) \leq \varepsilon, \label{eq:bin-dot-kernel-sec.1}
\end{align}
where $\bsize$ denotes the cardinality of bin $B_{\ell_k,u_k} (\bm{x} \odot \bm{y})$, for $k \in [p]$. 
%\green{Additionally, as outlined in Theorem~\ref{thm:bin-dot-kernel}, recall that $\varepsilon(k) \geq \varepsilon_{machine}$, for $k \in [p]$, where $\varepsilon_{machine}$ is the machine precision associated with the floating point representation of the input arrays $\bm{x}$ and $\bm{y}$.} 
This choice of $\varepsilon(k)$ corresponds to choosing the precision at which to compute the products for the elements in each bin. 
%\red{From a theoretical standpoint, we could think of the variables $\varepsilon(k)$ as continuous and fine tune them to choose values that optimize the effectiveness of (\ref{eq:bin-dot-kernel-sec.1}). However, from a practical standpoint we have limited our implementation to use \ppr \ and IEEE \hpr, \spr, \dpr, and \qpr \ precisions. This limitation somewhat simplifies the process for selecting the precision for each bin.} 
Note that for bins with $\pi_k = \ppr$, the value of $\varepsilon(k)$ is equal to 1.

If we follow the outline above, we can determine that the precision for each $(\ell,u)$-exponent bin is given by the function
\begin{align}
\text{Precision} (B_{\ell_k,u_k}(\bm{x} \odot \bm{y}) ) = \left\{
   \begin{array}{lcll}
      %\texttt{Octuple} &:& \beta_k \in [q, o),\ p_{x,y} \geq o \\
      %\texttt{Quadruple} &:& \sigma_k \in [m_{double}, m_{quad}), &\mu \geq m_{quad} \\
      \texttt{Double} &:& \sigma_k \in [m_{single}, m_{double}), &\mu \geq m_{double} \\
      \texttt{Single} &:& \sigma_k \in [m_{half}, m_{single}), &\mu \geq m_{single} \\
      \texttt{Half} &:& \sigma_k \in [0, m_{half}), &\mu \geq m_{half} \\
      \texttt{Perforate} &:& \sigma_k < 0
   \end{array}
   \right., \label{eq:precision-def}
\end{align}
%where \texttt{Perforate} indicates that it is not necessary to compute dot product for bin $k$,
where the condition involving $\mu$ ensures that $\varepsilon(k) \geq \varepsilon_{machine}$ is satisfied for each bin. Here, we refer to $\sigma_k$ as the \textit{bin score} and define it by
\begin{align}
    %\sigma_k := \lceil \mu - e_{\max} + \log_2 \left( | B_{\ell_k,u_k} (\bm{x} \odot \bm{y}) | \right) + u_k \rceil,
    %\sigma_k := \lceil \log_2 \left( \varepsilon \right) - e_{\max} + \log_2 \left( | B_{\ell_k,u_k} (\bm{x} \odot \bm{y}) | \right) + u_k \rceil,
    \sigma_k := \lceil \log_2(M_k) \rceil + u_k - e_{\max} - \lfloor \log_2(\varepsilon) \rfloor + 1
    \label{eq:bin-score-def}
\end{align}
for nonempty bins and $-\infty$ for empty bins. It is important to note that the value of $\mu$ is dependent on the precision of the input arrays and is defined by
\begin{align}
\mu = \left\{
   \begin{array}{lcl}
      %o &:& \text{$\bm{x}$ and $\bm{y}$ in \texttt{octuple} precision} \\
      %m_{quad} &:& \text{$\bm{x}$ and $\bm{y}$ in \texttt{quadruple} precision} \\
      m_{double} &:& \text{$\bm{x}$ and $\bm{y}$ in \texttt{double} precision} \\
      m_{single} &:& \text{$\bm{x}$ and $\bm{y}$ in \texttt{single} precision} \\
      m_{half} &:& \text{$\bm{x}$ and $\bm{y}$ in \texttt{half} precision}
   \end{array}
   \right. ,
\end{align}
and %$m_{quad}$, 
$m_{double}$, $m_{single}$, and $m_{half}$ denote the number of mantissa bits in %\texttt{quadruple}, 
\texttt{double}, \texttt{single}, and \texttt{half} precision, respectively. While these values are known and fixed, we have chosen to use this notation to represent them so that it might be clear how this could generalize to other, perhaps intermediate, levels of precision or alternative number representations \cite{cojean2020acceleration}.
%\red{representations with different numbers of significant bits.} 
%\red{As our discussion is limited to floating point representations we have only considered these four representations, however, from a theoretical standpoint one could imagine how this could be applied to representations without large gaps in the number of significant bits between each representation. Additionally, this approach is amenable to recently suggested alternative representations that could accelerate sparse linear algebra \cite{cojean2020customized}. While alternative number representations could allow for more levels of approximation in the Precision function and finer levels of approximation in computation, they are not considered in our theory or implementation as they are beyond the scope of this paper.}

%\purple{We now provide theoretical motivation for the precision function in (\ref{eq:precision-def}) and the bin score $\sigma_k$ in (\ref{eq:bin-score-def}). Specifically, we provide }
The following theorem establishes the relative error bound achieved by Algorithm~\ref{alg:qdot-comp} when (\ref{eq:precision-def}) is used to determine the bin precision parameters $\{ \pi_k \}_{k=1}^p$. %\red{We note that the formula in (\ref{eq:precision-def}) indicates that the most significant features of a bin contributing to the tightness of the relative error bound are the bin size and the maximum exponent of the bin, indicated by $u_k$.}

\begin{theorem} \label{thm:bin-precision}
Let $\bm{x}, \bm{y}$ be floating point arrays of size $n$ with machine precision $\varepsilon_{machine}$. Suppose that $e_{\max} := e_{\max} (\bm{x} \odot \bm{y}) \leq \fl \left( \bm{x}^{\intercal} \bm{y} \right)$ and $\theta = \{ B_{(\ell_k, u_k]} (\bm{x} \odot \bm{y}), \pi_k \}_{k = 1}^p$, where (\ref{eq:precision-def}) is used with tolerance $\varepsilon$ to determine the precisions $\{ \pi_k \}_{k = 1}^p$ at which the dot product for each bin is computed. If $\bm{\tilde{k}}(\bm{x}, \bm{y}; \theta)$ is the output of Algorithm~\ref{alg:qdot-comp} then there exists $\tau = O (\varepsilon)$ such that
\begin{align}
\left| \bm{x}^{\intercal} \bm{y} - \bm{\tilde{k}} (\bm{x}, \bm{y}; \theta) \right|
&\leq \tau \left| \bm{x}^{\intercal} \bm{y}  \right|. \label{result:bin-precision}
\end{align}
%where $\tau = O (\varepsilon)$ and $\varepsilon$ is the desired approximation error tolerance.
%$\tau = O (\varepsilon_{x,y})$ and $\varepsilon_{x,y}$ denotes the machine precision at which $\bm{x}$ and $\bm{y}$ are provided.
\end{theorem}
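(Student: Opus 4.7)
The plan is to reduce the statement to a per-bin estimate and then count bins. Under the hypothesis $e_{\max}(\bm{x} \odot \bm{y}) \leq \fl(\bm{x}^{\intercal} \bm{y})$, Theorem~\ref{thm:rel-bin-dot-kernel} already supplies the inequality
\begin{equation*}
\left| \bm{x}^{\intercal}\bm{y} - \bm{\tilde{k}}(\bm{x}, \bm{y}; \theta) \right|
\leq \left| \bm{x}^{\intercal}\bm{y} \right| \sum_{k=1}^{p} M_k \, 2^{u_k - e_{\max} + 1} \, \varepsilon(k),
\end{equation*}
so I only need to show that the dimensionless sum $S := \sum_{k=1}^{p} M_k \, 2^{u_k - e_{\max} + 1} \, \varepsilon(k)$ satisfies $S \leq \tau$ for some $\tau = O(\varepsilon)$ with a constant independent of $\bm{x}$, $\bm{y}$, and $\varepsilon$.

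The main step is a per-bin estimate: I will show that each summand of $S$ is at most $\varepsilon$. Using $\log_2 M_k \leq \lceil \log_2 M_k \rceil$ together with the bin-score definition (\ref{eq:bin-score-def}), the exponent of a single summand rewrites as
\begin{equation*}
\log_2\!\bigl(M_k \, 2^{u_k - e_{\max} + 1}\bigr) \leq \lceil \log_2 M_k \rceil + u_k - e_{\max} + 1 = \sigma_k + \lfloor \log_2 \varepsilon \rfloor.
\end{equation*}
I would then do a case analysis on the precision selected by (\ref{eq:precision-def}). In each of the four cases, $\varepsilon(k) = 2^{-m_{\pi_k}}$ for some $m_{\pi_k} \in \{m_{half}, m_{single}, m_{double}\}$, with the perforation case reading as $\varepsilon(k) = 1$. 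The defining inequality $\sigma_k < m_{\pi_k}$ (respectively $\sigma_k < 0$ for \ppr) then yields
\begin{equation*}
\log_2\!\bigl(M_k \, 2^{u_k - e_{\max} + 1} \, \varepsilon(k)\bigr) < \lfloor \log_2 \varepsilon \rfloor \leq \log_2 \varepsilon,
\end{equation*}
so every summand is strictly less than $\varepsilon$.

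To finish, I would bound the number of bins $p$ by a constant. Since $\{(\ell_k, u_k]\}_{k=1}^p$ partitions the integer-valued range $[e_{\min}(\bm{x} \odot \bm{y}), e_{\max}(\bm{x} \odot \bm{y})]$ and each endpoint is an unbiased exponent of the input floating point format, the number of bins is bounded above by the fixed span of representable exponents for that format, i.e., $p \leq p_{\max}$ for some constant $p_{\max}$ depending only on the format (not on $n$ or $\varepsilon$). Combining this with the per-bin bound gives $S < p_{\max}\, \varepsilon$, so Theorem~\ref{thm:rel-bin-dot-kernel} yields (\ref{result:bin-precision}) with $\tau = p_{\max}\, \varepsilon = O(\varepsilon)$.

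The main obstacle is purely one of careful bookkeeping: the $\lceil \cdot \rceil$ on $\log_2 M_k$, the $\lfloor \cdot \rfloor$ on $\log_2 \varepsilon$, and the ``$+1$'' appearing both in the bin score (\ref{eq:bin-score-def}) and in the error bound (\ref{result:bin-dot-kernel.2}) all have to align so that the strict inequalities $\sigma_k < m_{\pi_k}$ of (\ref{eq:precision-def}) translate exactly into $M_k \, 2^{u_k - e_{\max} + 1} \, \varepsilon(k) < \varepsilon$. A secondary subtlety is treating perforation as the limiting case $\varepsilon(k) = 1$, which has to be verified through the separate $\sigma_k < 0$ condition rather than through any $\sigma_k < m_{\pi_k}$ test, since there is no mantissa to associate with a perforated bin.
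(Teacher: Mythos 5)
Your proposal is correct and follows essentially the same route as the paper: start from the relative bound of Theorem~\ref{thm:rel-bin-dot-kernel}, use the bin score (\ref{eq:bin-score-def}) and the precision assignment (\ref{eq:precision-def}) to show each summand $M_k 2^{u_k-e_{\max}+1}\varepsilon(k)$ is at most $\varepsilon$, and then multiply by the number of bins. Your only (minor, and welcome) refinement is bounding the bin count by a format-dependent constant $p_{\max}$, where the paper simply writes $\tau = N_{bins}\,\varepsilon$ with $N_{bins}\leq p$.
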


\begin{proof}
Let $b_k = \log_2 \left( \bsize \right)$, for all $k \in [p]$. By our hypotheses that $e_{\max} \leq \fl \left( \bm{x}^{\intercal} \bm{y} \right)$, the result in (\ref{result:bin-dot-kernel.2}) from Theorem~\ref{thm:rel-bin-dot-kernel} holds. 
\iffalse %%%%%%%%%%%%%%%%%%%%%%%%%%%%%%
In particular, the relative error bound
\begin{align}
\left| \bm{x}^{\intercal} \bm{y} - \bm{\tilde{k}} (\bm{x}, \bm{y}; \theta) \right|
&\leq \left| \bm{x}^{\intercal} \bm{y} \right| \sum_{k=1}^p \bsize 2^{u_k - e_{\max} + 1} \varepsilon (k) \label{eq:bin-precision.1}
\end{align}
holds, where $\bsize$ denotes the cardinality of bin $B_{\ell_k,u_k} (\bm{x} \odot \bm{y})$, for $k \in [p]$. 
\fi %%%%%%%%%%%%%%%%%%%%%%%%%%%%%%%%%%
\iffalse %%%%%%%%%%%%%%%%%%%%%%%%%%%%
For $k = p$, the term in the summation in (\ref{eq:bin-precision.1}) is 
\begin{align}
\bsize 2^{e_{\max} - e_{\max} + 1} \varepsilon (p) 
%&\geq 2^{b_N} 2^{1} \varepsilon (N) 
&\geq 2^{b_p + 1} \varepsilon(p) 
\geq 2 \varepsilon (p),
\end{align}
where the final inequality holds using $b_p \geq 0$. 
\fi %%%%%%%%%%%%%%%%%%%%%%%%%%%%%%%%%
Since the goal is to 
bound the relative approximation error of $\bm{x}^{\intercal} \bm{y}$ by $\varepsilon$, we show that the choices for bin precisions given in (\ref{eq:precision-def}) provides the desired result up to a scalar multiple.
\iffalse %%%%%%%%%%%%%%%%%%%%%%%%%%%
compute the highest precision approximation of $\bm{x}^{\intercal} \bm{y}$, the choice of precision for the bin $B_{\ell_k,u_k} (\bm{x}, \bm{y})$ should be the same as the precision of $\bm{x}$ and $\bm{y}$. 
Hence, we take 
\begin{align}
\varepsilon (N) = \varepsilon_{x, y} := 2^{-\mu}. \label{eq:bin-precision.1.1}
\end{align}

We now show that the choices for bin precisions given in (\ref{eq:precision-def}) provides the desired result. 
\fi %%%%%%%%%%%%%%%%%%%%%%%%%%%%%%
Let $k \in [p]$, then we require a choice for $\varepsilon(k)$ such that 
\begin{align}
\bsize 2^{u_k - e_{\max} + 1} \varepsilon (k) 
&\leq \varepsilon.
%&\leq \left| B_{\ell_N,u_N} (\bm{x}, \bm{y}) \right| 2 \varepsilon (N). 
\label{eq:bin-precision.2}
\end{align}
Since $\bsize 2^{u_k - e_{\max} + 1} \varepsilon (k) \leq 2^{ \lceil b_k \rceil + u_k - e_{\max} + 1} \varepsilon (k)$ %\label{eq:bin-precision.3}
and 
$\varepsilon
\geq 2^{\lfloor \log_2(\varepsilon) \rfloor}$, 
%\label{eq:bin-precision.4}
we can ensure (\ref{eq:bin-precision.2}) holds by enforcing
\begin{align}
2^{\lceil b_k \rceil + u_k - e_{\max} + 1} \varepsilon (k) 
&\leq 2^{\lfloor \log_2(\varepsilon) \rfloor}
%2^{\lceil b_k \rceil + u_k - e_{\max} + 1} \varepsilon (k) \leq 2 \varepsilon(N) 
\label{eq:bin-precision.5}
\end{align}
which, when rearranged, yields the bound
$\varepsilon (k) 
\leq 2^{e_{\max} - \lceil b_k \rceil - u_k - 1 + \lfloor \log_2(\varepsilon) \rfloor}$.
%\label{eq:bin-precision.6}
Expressing $\varepsilon(k) = 2^{-\mu_k}$, where $\mu_k \in \mathbb{N}$ is to be determined,  yields the simplified inequality
$-\mu_k \leq e_{\max} + \lfloor \log_2(\varepsilon) \rfloor - \lceil b_k \rceil - u_k - 1$. %\label{eq:bin-precision.7}
Reordering and labeling components 
%in (\ref{eq:bin-precision.7}) 
allows for a meaningful interpretation of the bound on $\mu_k$ as %the range of values between the minimum significant exponent and the maximum possible exponent for the dot product in the bin, that is
\begin{align}
\mu_k 
\geq \aunderbrace[l1r]{\strut \left( \lceil b_k \rceil + u_k \right) \strut}_{\strut \text{Max reachable bit}} - 
\aunderbrace[l1r]{\strut \left( e_{\max} + \lfloor \log_2(\varepsilon) \rfloor \right) \strut}_{\strut \text{Min significant bit}} + 
\aunderbrace[l0r]{\strut 1 \strut}_{\strut \text{Min bit}}. \label{eq:bin-precision.7.1}
\end{align}

As $\mu_k$ denotes the number of mantissa bits required to achieve the desired bound, the inequality in (\ref{eq:bin-precision.7.1}) allows us to determine the minimum level of precision the components in bin $B_{\ell_k,u_k} (\bm{x} \odot \bm{y})$ can be computed at while satisfying the desired bound. 
%\red{As all of the values on the right hand side of (\ref{eq:bin-precision.7.1}) are known we can compute this value.} 
%\red{For implementation purposes, we are limiting the choice of precision to IEEE Floating Point representations. Hence, when}
When the value $\lceil b_k \rceil + u_k - e_{\max} - \lfloor \log_2(\varepsilon) \rfloor + 1$ falls between the number of mantissa bits for two different IEEE Floating Point representations, it is necessary to choose the precision with more mantissa bits to ensure the desired relative error bound is satisfied. If $\lceil b_k \rceil + u_k - e_{\max} - \lfloor \log_2(\varepsilon) \rfloor + 1 \leq 0$ then the components in bin $B_{\ell_k,u_k} (\bm{x} \odot \bm{y})$ are not required to compute the approximate dot product at the desired level of accuracy, which results in assigning a precision of \ppr \ to bin $k$. 
%\red{Additionally, by working under the assumption that no computations should be made at a precision higher than the precision used by the given vectors $\bm{x}$ and $\bm{y}$, the highest precision assigned to any bin will not exceed the input precision.} 
Following these observations yields the definition of the Precision function in (\ref{eq:precision-def}) and the bin score value in (\ref{eq:bin-score-def}).

Now using (\ref{eq:precision-def}), we have that $\bsize 2^{u_k - e_{\max} + 1} \varepsilon (k) \leq \varepsilon$,
%\label{eq:bin-precision.8} 
for all $k \in [p]$, and hence,
\begin{align}
\sum_{k=1}^{p} \bsize 2^{u_k - e_{\max} + 1} \varepsilon (k)
%&\leq \sum_{k=1}^p 2 \varepsilon_{x,y}
\leq N_{bins} \varepsilon, \label{eq:bin-precision.9}
\end{align}
where we write $N_{bins}$ to denote the number of nonempty bins and note that $N_{bins} \leq p$. Finally, substituting (\ref{eq:bin-precision.9}) into 
%(\ref{eq:bin-precision.1}) 
(\ref{result:bin-dot-kernel.2}) yields
$| \bm{x}^{\intercal} \bm{y} - \bm{\tilde{k}} (\bm{x}, \bm{y}; \theta) | \leq N_{bins} \varepsilon \left| \bm{x}^{\intercal} \bm{y}  \right|$, %\label{eq:bin-precision.10}
the desired result.
\end{proof}

%\red{The parameter selection algorithm should return the exponent bins and the precision required for each precision bin.} 
From a purely theoretical standpoint, the entire parameter selection algorithm can be summarized by using Definition~\ref{def:exp-bin} to compute the exponent bins, then using  (\ref{eq:precision-def}) to compute the precision for each bin. %\purple{However, since there are several techniques for partitioning the interval $[e_{\min}, e_{\max}]$ and computing the exponent arrays we provide more specific details of our implementation.}
Pseudocode for the parameter selection algorithm is given in Algorithm~\ref{alg:qdot-param}. 
%To accompany the presentation of this algorithm, 
%\purple{We now provide a discussion of each step together with memory and runtime complexities.}
\begin{center}
\begin{algorithm}[t]
\begin{algorithmic}[1]
\STATE{\textit{Inputs}: Arrays $\bm{x}, \bm{y} \in \mathbb{R}^n$, number of mantissa bits in precision of inputs $\mu$, approximation error tolerance $\varepsilon$, and strategy for partitioning exponent range.}
\STATE{\textit{Outputs}: Bins and corresponding Bin Precisions $\theta \gets \{ B_{\ell_k, u_k}, \pi_k \}_{k = 1}^p$}
\STATE{$\bm{e} \gets \fl (\bm{x}) + \fl (\bm{y})$ \hfill Exponent preprocessing}
\STATE{$e_{\min} \gets \min\{ e_i: 1 \leq i \leq n \}$; $e_{\max} \gets \max\{ e_i: 1 \leq i \leq n \}$ \hfill Identify min and max exponents}
\STATE{$\bm{e} \gets \texttt{sort} \ \bm{e}$ \hfill Sorted bin initialization}
\STATE{$B_{\ell_k, u_k} \gets \{ j \in [n] : \ell_k < e_j \leq u_k \}$ \hfill Identify components for each exponent bin}
\STATE{$\sigma_k \gets \lceil \log_2(M_k) \rceil + u_k - e_{\max} - \lfloor \log_2(\varepsilon) \rfloor + 1$ \hfill Compute bin scores from (\ref{eq:bin-score-def})}
\STATE{$\pi_k \gets \text{Precision} \left( B_{\ell_k,u_k} (\bm{x} \odot \bm{y}) \right)$ \hfill Determine bin precision using (\ref{eq:precision-def}) \ }
\end{algorithmic}
\caption{\textbf{qdot} Parameter Selection Algorithm}
\label{alg:qdot-param}
\end{algorithm}
\end{center}

Line 3 of Algorithm~\ref{alg:qdot-param} computes the array of exponents $\fl (\bm{x}) + \fl (\bm{y})$, which is the key piece of information required to bound the relative approximation error.
%From our analysis, we find that the key piece of information required to bound the relative approximation error is the array of exponents $\fl (\bm{x}) + \fl (\bm{y})$. This is computed in line 3 of Algorithm~\ref{alg:qdot-param}. 
In our implementation of the parameter selection phase, this step is accomplished by using a bitmask and bitshift to extract the unbiased floating point exponent of $x_i$ and $y_i$, and saving their sum in an array of integers. For inputs with precision up to \qpr, these sums can be computed using a \sint \ since the \qpr \ exponent is 15 bits long and an unsigned \sint \ has a capacity of 16 bits. Furthermore, by working in the exponent space, we can avoid introducing additional floating point operations in the parameter selection algorithm. In fact, nearly all of the operations required by our exponent preprocessing algorithm involve only integers, in particular \texttt{short int} values, and not floating points. 
%\red{If the implementation of this step in the parameter selection phase required the computation of each component-wise product $x_i y_i$ as floating point operations then in the original input precision, then this step would be as costly as computing the original dot product, and will result in an approximation that is immediately more costly.} 
This step has a complexity of $O(n)$.
%In order to efficiently identify the components for each exponent bin, 

Algorithm~\ref{alg:qdot-param} requires that the exponent array, $\bm{e}$, is sorted. In our implementation, we make use of the counting sort algorithm based on two observations. Firstly, the counting sort has a worst-case runtime of $O(n + c)$, where $n$ is the number of elements being sorted and $c$ is the number of distinct values that the $n$ elements can have. In Algorithm~\ref{alg:qdot-param}, the number of distinct values being counted corresponds to the exponent range, which is given by $c = e_{\max} - e_{\min} + 1$. Hence, the worst-case cost for using counting sort is $O(n + e_{\max} - e_{\min} + 1)$. Secondly, the size of the exponent range is bounded fairly tightly regardless of the precision used. For example, input arrays with \dpr \ precision have exponents in $[-1023, 1024]$ which yields that $e_{\max} - e_{\min} + 1 \leq 2048 = 2^{11}$. Further, input arrays with \qpr \ precision satisfy $e_{\max} - e_{\min} + 1 \leq 32768 = 2^{15}$. 
%Other popular and efficient sorting algorithms, such as merge sort, have a worst-case cost of $O(n \log_2 n)$. Comparing the complexity of counting sort to merge sort yields that counting sort should be theoretically faster than merge sort for problems of dimension $O(\texttt{1e3})$ and larger, regardless of input precision. 
Compared to sorting algorithms with runtime $O(n \log_2 n)$, counting sort should be theoretically faster for problems of dimension $n = O(\texttt{1e3})$ and larger, regardless of input precision. 
%\red{More details supporting this claim can be found in Appendix~\ref{sec:appendixc}. These observations support the use of counting sort instead of merge sort.} 
Hence, the runtime and memory complexity for this step is $O(n + e_{\max} - e_{\min} + 1)$.

In line 6 of Algorithm~\ref{alg:qdot-param}, we split the original vector into sub-vectors using exponent bins that depend on a partition of the interval $[e_{\min}, e_{\max}]$. To simplify the parameter selection algorithm, instead of explicitly requiring a partition of this range, we specify a strategy that is used to determine partition parameters $\ell_k$ and $u_k$ during the parameter selection algorithm. 
%\red{This simplifies the exponent bin construction during the Algorithm~\ref{alg:qdot-param}. In this paper, we theoretically consider three different binning strategies: \begin{enumerate}    \item {\bfseries Exact binning}: The interval $[e_{\min}, e_{\max}]$ is partitioned into singleton intervals defined by $\{ (u_k - 1, u_k] \}_{k=1}^{e_{\max} - e_{\min} + 1}$ where $u_k = e_{\min} + k - 1$.    \item {\bfseries Ranged binning}: The interval $[e_{\min}, e_{\max}]$ is partitioned into intervals of fixed-width $w$ given by $\{ (u_k - w, u_k] \}_{k \geq 1}$, where $u_k = e_{\min} + kw - 1$. The parameter $w$ is the width of each bin and is specified beforehand by the user. Exact binning corresponds to $w = 1$.    \item {\bfseries Variable-width binning} (bin splitting): The interval $[e_{\min}, e_{\max}]$ is recursively split into subintervals of varying size where the user can specify the maximum number of splits beforehand. Using a larger number of splits has the potential to identify more components that can be approximated during computation.\end{enumerate} Our implementation makes use of exact binning and follows from theoretical and practical comparisons of the three partition techniques considered. The choice of exact binning yields an effective and efficient approximation of the full-precision dot product kernel.}
After considering various strategies, we follow a straightforward approach that emphasizes approximation effectiveness: 
\begin{itemize}
    \item {\bfseries Exact binning}: The interval $[e_{\min}, e_{\max}]$ is partitioned into singleton intervals defined by $\{ (u_k - 1, u_k] \}_{k=1}^{e_{\max} - e_{\min} + 1}$ where $u_k = e_{\min} + k - 1$.
\end{itemize}
In addition to having the lowest asymptotic runtime and memory complexity compared to other binning strategies we considered, exact binning also helps our implementation avoid overflow and underflow issues when converting input data from higher precisions to lower precisions (e.g., \dpr \ to \hpr). To avoid running into these issues, we use the following technique outlined here for bin $B_{\ell_k, u_k}$. First, the data for $B_{\ell_k, u_k}$ is scaled so that the unbiased floating point exponent is 0. The scaled data is then copied into an array with precision $\pi_k$. The dot product is then computed for the components in bin $B_{\ell_k, u_k}$ at precision $\pi_k$. The resulting dot product is then converted back to the original input precision. Finally, by using exact binning, we can undo the scaling by multiplying the resulting dot product for bin $B_{\ell_k, u_k}$ by $2^{u_k}$. 
%\red{Hence, in addition to being the most theoretically effective partitioning technique we considered, exact binning also simplifies the technique to avoid overflow and underflow in \textbf{qdot}. Runtime and memory analysis comparing the three different binning methods is provided in Appendix~\ref{sec:appendixa}.}
The computational and memory complexity of exact binning is $O(n + e_{\max} - e_{\min} + 1)$.

The final steps in lines 7 and 8 of  Algorithm~\ref{alg:qdot-param} are used to identify the precision of each bin. For $p$ bins, where $p \leq n$, this step has a computational and memory cost of $O(p)$. Combining the cost for each step yields the total computational and memory cost of $O(n + e_{\max} - e_{\min} + 1)$. Since $e_{\max} - e_{\min} + 1$ is bounded by $2^{11}$ and $2^{15}$ for \dpr \ and \qpr, respectively, it follows that for even moderately sized problems the computational and memory complexity for Algorithm~\ref{alg:qdot-param} are effectively $O(n)$. It is important to emphasize that lines 3 -- 6 and line 8 make use of \texttt{short int}s. Furthermore, even though line 7 makes use of $\log_2$, in practice, we found the number of bins, $p$, to be relatively small (e.g., between 15 and 40).

%\textcolor{red}{(Add discussion of approximation efficiency for parameter selection phase here)}

Before proceeding to the full \textbf{qdot} algorithm, we consider a practical issue that may arise when using this error-bounded approximate kernel. Specifically, 
%\red{if the user's application requires the approximation error to be close to $\varepsilon_{machine}$ corresponding to the input arrays $\bm{x}$ and $\bm{y}$} 
it is possible that the parameter selection phase will recommend that all components should remain at the same precision as the inputs. In this case, our approximate kernel will be identical to the original kernel but will incur the overhead cost of the full parameter selection phase. 
%\red{To alleviate this potential issue, we note that we} 
We can establish necessary conditions for when the parameter selection algorithm will recommend using the input precision for all components based only on the values of $e_{\min}$ and $e_{\max}$ to perform early termination of the quantization. %\qdot}. 
%\red{This condition can be used to perform early termination of the parameter selection algorithm before the more costly steps are performed, as it will be known in advance that they will not yield any quantization.}

\begin{theorem} \label{thm:early-termination}
Let $\bm{x}$ and $\bm{y}$ be arrays of floating point values and let $\hat{\mu}$ denote the number of mantissa bits for the level of precision directly below the precision of the inputs $\bm{x}$ and $\bm{y}$ and let $\varepsilon$ be the tolerance for \qdot. If $e_{\max} - e_{\min} \leq -\lfloor \log_2(\varepsilon) \rfloor - \hat{\mu}$ then the level of precision specified for all bins by (\ref{eq:precision-def}) will be the same as the precision of $\bm{x}$ and $\bm{y}$.
\end{theorem}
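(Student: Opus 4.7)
The plan is to reduce the theorem to a uniform lower bound on the bin scores $\sigma_k$ and then verify that bound directly from the hypothesis. First I would make precise what ``input precision'' means in terms of the bin score: a short case split on whether the input precision is \texttt{double}, \texttt{single}, or \texttt{half} shows that in each case the Precision function in (\ref{eq:precision-def}) returns the input precision exactly when $\sigma_k \geq \hat\mu$, where $\hat\mu$ is the number of mantissa bits of the precision one step below the input. (For $\sigma_k$ values exceeding the input mantissa count $\mu$, the function is implicitly capped at the input precision, since no higher precision is available.) Thus the theorem reduces to proving that every nonempty bin $B_{\ell_k, u_k}$ produced by Algorithm~\ref{alg:qdot-param} satisfies $\sigma_k \geq \hat\mu$.

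Next I would derive a uniform lower bound on $\sigma_k$ using only the partition structure. Two elementary observations suffice. Since the bin is nonempty, $M_k \geq 1$ and hence $\lceil \log_2(M_k) \rceil \geq 0$. Moreover, because $(\ell_k, u_k]$ is a member of a partition of the integer range $[e_{\min}, e_{\max}]$ and must contain at least one exponent $e_j \geq e_{\min}$, we have $u_k \geq e_{\min}$. Plugging both inequalities into the definition (\ref{eq:bin-score-def}) yields
\begin{equation*}
    \sigma_k \;\geq\; 0 + e_{\min} - e_{\max} - \lfloor \log_2(\varepsilon) \rfloor + 1 \;=\; -(e_{\max} - e_{\min}) - \lfloor \log_2(\varepsilon) \rfloor + 1.
\end{equation*}

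Finally, substituting the hypothesis $e_{\max} - e_{\min} \leq -\lfloor \log_2(\varepsilon) \rfloor - \hat\mu$ into this lower bound gives $\sigma_k \geq \hat\mu + 1 \geq \hat\mu$ for every nonempty bin, which by the first paragraph forces the Precision function to assign the input precision. The proof is largely bookkeeping; the only step requiring any care is the opening case analysis confirming that $\sigma_k \geq \hat\mu$ is the single threshold that triggers ``input precision'' across the three input regimes (\texttt{double}, \texttt{single}, \texttt{half}) covered by (\ref{eq:precision-def}), together with the harmless convention that empty bins (which carry no components) may be assigned any precision without affecting the conclusion.
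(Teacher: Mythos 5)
Your proposal is correct and follows essentially the same route as the paper's proof: both rearrange the hypothesis and combine it with the two elementary facts $\lceil \log_2(M_k)\rceil \geq 0$ and $u_k \geq e_{\min}$ to conclude $\sigma_k \geq \hat{\mu}$ for every bin, whence (\ref{eq:precision-def}) assigns the input precision. Your version is marginally more explicit about the case analysis on the Precision function and about empty bins (and retains the $+1$, giving the slightly stronger $\sigma_k \geq \hat{\mu}+1$), but the mathematical content is identical.
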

\begin{proof}
By our hypothesis that $e_{\max} - e_{\min} \leq -\lfloor \log_2(\varepsilon) \rfloor - \hat{\mu}$ we have $\hat{\mu} \leq -\lfloor \log_2(\varepsilon) \rfloor - e_{\max} + e_{\min}$. Combining (\ref{eq:bin-score-def}) together with the fact that $\log_2 \left( \bsize \right) \geq 0$ and $u_k \geq e_{\min}$ for all $k \in [p]$, it now follows that
$\hat{\mu} \leq -\lfloor \log_2(\varepsilon) \rfloor - e_{\max} + e_{\min} \leq -\lfloor \log_2(\varepsilon) \rfloor - e_{\max} + \lceil \log_2 \left( \bsize \right) \rceil + u_k \leq \sigma_k$,
%\label{eq:early-termination.2}
for all $k \in [p]$. Since $\hat{\mu}$ is the number of mantissa bits for the level of precision directly below the precision of $\bm{x}$ and $\bm{y}$, the Precision function in (\ref{eq:precision-def}) yields that the precision of every bin will be the same as the input. 
%\red{which results in computing the full dot product at the input precision.}
\end{proof}

\iffalse %%%%%%%%%%%%%%%%%%%%%%%%%%%%%%
The speedup that can be achieved in Algorithm~\ref{alg:q-dot} when compared to a full-precision dot product kernel depends on two key factors:
\begin{enumerate}
    \item[(1)] Time spent in the parameter selection and data processing phase
    \item[(2)] Quality of parameters returned by the parameter selection phase
\end{enumerate}
Factor (1) signifies the overhead required to identify parameters for \textbf{qdot} that satisfy the error tolerance and is an overhead cost when compared to the original dot product and to prepare the mixed precision data for computation. The design of the parameter selection phase in Algorithm~\ref{alg:q-dot} attempts to minimize the time spent identify the best parameter choice while still bounding the approximation error. %The following sections will provide runtime and memory analysis that support the design of our parameter selection algorithm. Factor (2) indicates where benefits due to approximation are observed. In particular, inputs amenable to parameters with heavy amounts of half or perforated indices can benefit from speedups on GPUs and due to perforation.
\fi %%%%%%%%%%%%%%%%%%%%%%%%%%%%%%

\subsection{qdot Pseudocode and Implementation Details} \label{sec:qdot-full-alg}
Putting all the components together, pseudocode for our error-bounded approximate dot product kernel framework, \textbf{qdot}, is presented in Algorithm~\ref{alg:qdot}. \textbf{qdot} is implemented using C++ and CUDA. The parameter selection and computation phases both run on a GPU to take advantage of parallelism and \hpr \ precision in hardware. Many steps of the parameter selection phase, as well as the entire computation phase, are amenable to parallelization. 
%\purple{At this point, we have established all of the necessary components for our error-bounded approximate kernel framework and have theoretically established error bound properties for the computation phase based on the parameters identified by the parameter selection phase. We now combine the individual components to form \textbf{qdot}. From our general framework in Section~\ref{sec:error-bounded-ac}, the design of \textbf{qdot} should be clear but for completeness we include pseudocode for \textbf{qdot} in Algorithm~\ref{alg:qdot}.}
\begin{center}
\begin{algorithm}[t]
\begin{algorithmic}[1]
\STATE{\textit{Inputs}: Arrays $\bm{x}, \bm{y} \in \mathbb{R}^n$, and error tolerance $\varepsilon$}
\STATE{\textit{Output}: Approximate dot product $z$}
\STATE{\textbf{------ Parameter Selection Phase ------}}
\STATE{Use Algorithm~\ref{alg:qdot-param} with input $(\bm{x}, \bm{y})$, tolerance $\varepsilon$, and exact binning to identify parameters $\theta$}
\STATE{\textbf{------ Computation Phase ------}}
\STATE{Use Algorithm~\ref{alg:qdot-comp} with input $(\bm{x}, \bm{y})$ and parameters $\theta$ to compute $z$}
\end{algorithmic}
\caption{\textbf{qdot} -- Quantized Dot Product Kernel}
\label{alg:qdot}
\end{algorithm}
\end{center}

%%%%%%%%%%%%%%%%%%%%%%%%%%%%%%%%%%%%

%First, we consider each step of the parameter selection phase, Algorithm~\ref{alg:qdot-param}. 
%\red{To implement line 2 efficiently, we use a binary mask and bit shifts to manipulate the floating point representation of each component and save the unbiased exponent in a \sint.} 
In the implementation of the parameter selection phase, for components $x_i$ and $y_i$,  the \sint \ exponent values are summed on each thread to determine $e_i$, where each index $i \in [n]$ is operated in parallel from the other indices. The values of $e_{\min}$ and $e_{\max}$ are then computed using a reduction over the array of exponents $\bm{e}$. Here, we use \texttt{thrust::minmax\_element} from the CUDA Thrust C++ template library \cite{thrust2021doc} to perform this reduction. Next, we perform counting sort of $\bm{e}$ so that the exponent values are in non-decreasing order. We implemented our own version of counting sort in CUDA to take advantage of parallelism. In our implementation of the counting sort algorithm, we save the cumulative counts of each value in the range $[e_{\min}, e_{\max}]$, which makes it straightforward to define the bins.
%\green{In counting sort, the sorting is achieved by using the count of each integer in the range of possible values $[e_{\min}, e_{\max}]$ then making a second pass over the array and using the count of each element to reorder the indices. To parallelize the step where the values in $[e_{\min}, e_{\max}]$ are counted, the full array $\bm{e}$ is partitioned into $m$ different parts, the counts are computed over the individual parts in parallel, then a reduction is performed across the counts of all the parts to determine the full count of each value. To parallelize the step where the indices are reordered, we make use of the total count and the count over the individual parts. This allows us to perform the reordering of the indices in parallel over the same parts that were used to parallelize the counting step. }
%\textcolor{red}{(Add figure to demonstrate how counting sort is parallelized because description may be confusing).} 
%The process of identifying the bins is actually completed during the counting sort step as we save the cumulative counts of each value in the range $[e_{\min}, e_{\max}]$. 
%\textcolor{red}{which line in the algorithm does this correspond to?}
%\red{This is an additional benefit of using counting sort over an alternative sorting option.} 
All of the remaining steps of the parameter selection algorithm are applied to each bin independently, and hence, are easily parallelizable on the GPU. 
%Note that the number of nonempty bins in experiments with scientific computing applications was typically small (e.g., between 15 and 40).

While the computation phase is theoretically straightforward, we had to carefully handle computation in lower precisions to avoid overflow and underflow. As noted previously in Section~\ref{sec:param-phase}, this requires scaling the input data prior to performing the quantization, and re-scaling the result back after computing dot product of the individual bins. %\purple{First, before converting the data from the input precision to lower precisions, we create a rescaled copy of the input arrays where all of the floating point exponents are modified by applying a bitmask so that the resulting exponent corresponds to an unbiased exponent with value 0. This allows us to copy the rescaled data to any precision without encountering overflow or underflow. We then make copies of the rescaled data in the lower precisions required for the computation of each precision bin. For \dpr \ precision inputs $\bm{x}$ and $\bm{y}$, this requires a rescaled copy of $\bm{x}$ and $\bm{y}$ then copies of the required regions of these arrays at \spr \ and \hpr \ for the computations. This results in a total of at most 3 copies per array, depending on the precisions identified during the parameter selection algorithm. }
%\red{It is possible that the use of rescaled copies will not always be necessary as we observed in several instances where the computation was correct without rescaling. However, we elected to include this rescaling in all of our experiments since the primary focus of our implementation was on approximation effectiveness and not efficiency.}
%\purple{After the computation of each precision bin is completed, the resulting scalar value for each bin is converted to the input precision then rescaled using the exponent $u_k$ corresponding to the exact bin for which the computation was performed. Specifically, this amounts to multiplying the resulting dot product for each bin $B_{\ell_k, u_k}$ by $2^{u_k}$. }
%\red{This is where the use of exact bins helps our implementation avoid issues with overflow and underflow while also providing a straightforward rescaling and unscaling process.} 
Each bin dot product is computed using \texttt{cublas} functions from the CUDA Basic Linear Algebra Subroutine library \cite{cublas2021doc}. In particular, as the higher precision values, such as \dpr, do not need to be scaled in our experiments, we compute all of the bins identified as \dpr \ using a single call to \texttt{cublasDdot}. The lower precision bins, \hpr \ and \spr, need to be scaled individually so we use strided and batched \texttt{cublas} functions to efficiently compute the dot products for each bin. This way, they can be re-scaled individually before accumulating. For \spr \ bins, we use \texttt{cublasSgemmStridedBatched}, and for \hpr \ bins, we use \texttt{cublasGemmStridedBatchedEx}. To avoid overflow when computing the dot product for large bins in \hpr \ precision, we make use of Tensor Core Units \cite{cuda2021guide, zachariadis2020accelerating} when calling the function \texttt{cublasGemmStridedBatchedEx}. This is a hardware efficient operation for performing the products in \hpr \ precision while performing the accumulation in \spr \, which avoids overflow. When the Tensor Core Units were not used, we observed that overflow occurred in some cases where the input arrays were large in dimension.
\section{Numerical Experiments}
\label{sec:results}
In this section, we perform numerical experiments using \textbf{qdot} to verify the theoretical error bound and to test the level of approximation that can be used in scientific computing applications. The first set of experiments is performed using synthetic data and serves to empirically verify the error bound and compute the approximation effectiveness, efficiency, and speedup of \textbf{qdot}. Then we integrate \textbf{qdot} into two scientific computing applications to investigate the level of approximation that can be introduced into the dot product kernel without compromising the fidelity of each application. 
%\purple{The applications we have chosen to test serve as representatives for the large classes of (a) linear solvers and (b) eigenvalue computation techniques. We choose a fundamental technique to serve as a representative from each class for our experiments. In particular, we use Conjugate Gradient (CG) as the linear solver representative and the Power method as the eigenvalue application representative.}
The applications we consider are the Conjugate Gradient (CG) method for solving linear systems, and the Power Method for computing extreme eigenvalues of linear systems.

\subsection{Tightness of Error Bound}
\label{sec:experiment-errorbound}
In the first set of experiments, we used \textbf{qdot} to approximate a \dpr \ precision dot product for arrays of varying dimension sampled from two different distributions. 
%\red{In these experiments, we will vary the array dimension $n$, the exponent range $[e_{\min}, e_{\max}]$ in the sample data, and the error tolerance $\varepsilon$ for \textbf{qdot}.} 
Since the level of approximation that can be introduced is dependent on the exponent range $[e_{\min}, e_{\max}]$, the floating point exponents for the components of the arrays are sampled from  uniform or normal distributions, parameterized by a single value $t$:
\begin{enumerate}
    \item Distribution $\mathcal{A}$: Uniformly Distributed Floating Point Exponents. Array components of the form $s * 2^p$, where $s \sim \mathcal{U}[0.5,1)$ and $p \sim \mathcal{U}[-0.5*t, 0.5*t]$.
    \item Distribution $\mathcal{B}$: Normally Distributed Floating Point Exponents. Array components of the form $s * 2^p$, where $s \sim \mathcal{U}[0.5,1)$ and $p \sim \mathcal{N}(0,0.5 * t)$.
\end{enumerate}
The theoretical analysis, suggests that larger exponent ranges $[e_{\min}, e_{\max}]$ provides the potential for more components to be quantized to lower precisions, or be perforated even when small \qdot \ tolerance values, $\varepsilon$, are used.

In the first experiment, we vary the problem dimension while keeping the \textbf{qdot} tolerance fixed at $\varepsilon = \texttt{1e-16}$, in order to investigate the tightness of the error bound. For each problem dimension, we sample 1900 array pairs from distribution $\mathcal{A}$ and 1500 array pairs from distribution $\mathcal{B}$. These pairs are sampled with varying values for the distribution parameter $t$ in order to capture a wide range of input arrays with each distribution. For distribution $\mathcal{A}$, $t$ is taken to be values from the set $\{ 5i: 2 \leq i \leq 20 \}$ with 100 pairs sampled for each $t$. For distribution $\mathcal{B}$, $t$ is taken to be values from the set $\{ 2i : 1 \leq i \leq 15 \}$ with 100 pairs sampled for each $t$. %Larger values of $t$ for either distribution resulted in an overwhelming majority of the components being perforated. 
For each array pair, we use \textbf{qdot} to compute an approximate dot product value and the cublas function \texttt{cublasDdot} to compute the \dpr \ precision dot product. The relative approximation error, denoted by \emph{relerr} in the figures that follow, is equal to the quantity $| \texttt{cublasDdot}(\bm{x}, \bm{y}) - \textbf{qdot}(\bm{x}, \bm{y}; \varepsilon) |/| \texttt{cublasDdot}(\bm{x}, \bm{y}) |$. The relative approximation error bound for \textbf{qdot} follows from (\ref{result:bin-precision}). Since the number of non-empty bins, $N_{bins}$, is known prior to determining the bin precision, the bin score formula in (\ref{eq:bin-score-def}) is slightly adjusted to use $\varepsilon / N_{bins}$ in place of $\varepsilon$. This is done to ensure that the relative approximation error bound is satisfied for the desired $\varepsilon$. To capture the tightness of the \textbf{qdot} error bound, we compute the difference between the \textbf{qdot} error bound, denoted $qdot_{bound}$, and the relative approximation error of \textbf{qdot}. For each problem dimension from the set $\{ \texttt{1e2}, \texttt{1e3}, \texttt{1e4}, \texttt{1e5}, \texttt{1e6}, \texttt{1e7} \}$, we generated bar and whisker plots of this difference over all of the samples in Figure~\ref{fig:verify-bound}. We find that data where the exponents are sampled from a normal distribution yields instances where the error is tightly bounded by the error bound even with increasingly large array sizes. On the other hand, the data in which the floating point exponents are uniformly distributed is not captured tightly by the error bound as the problem size increases. %\purple{It is not entirely clear why the bound becomes looser as the problem size increases in instances where the floating point exponents are sampled from a uniform distribution.}

\begin{figure}[!ht]
  \begin{subfigure}[b]{0.48\textwidth}
    \includegraphics[width=\textwidth]{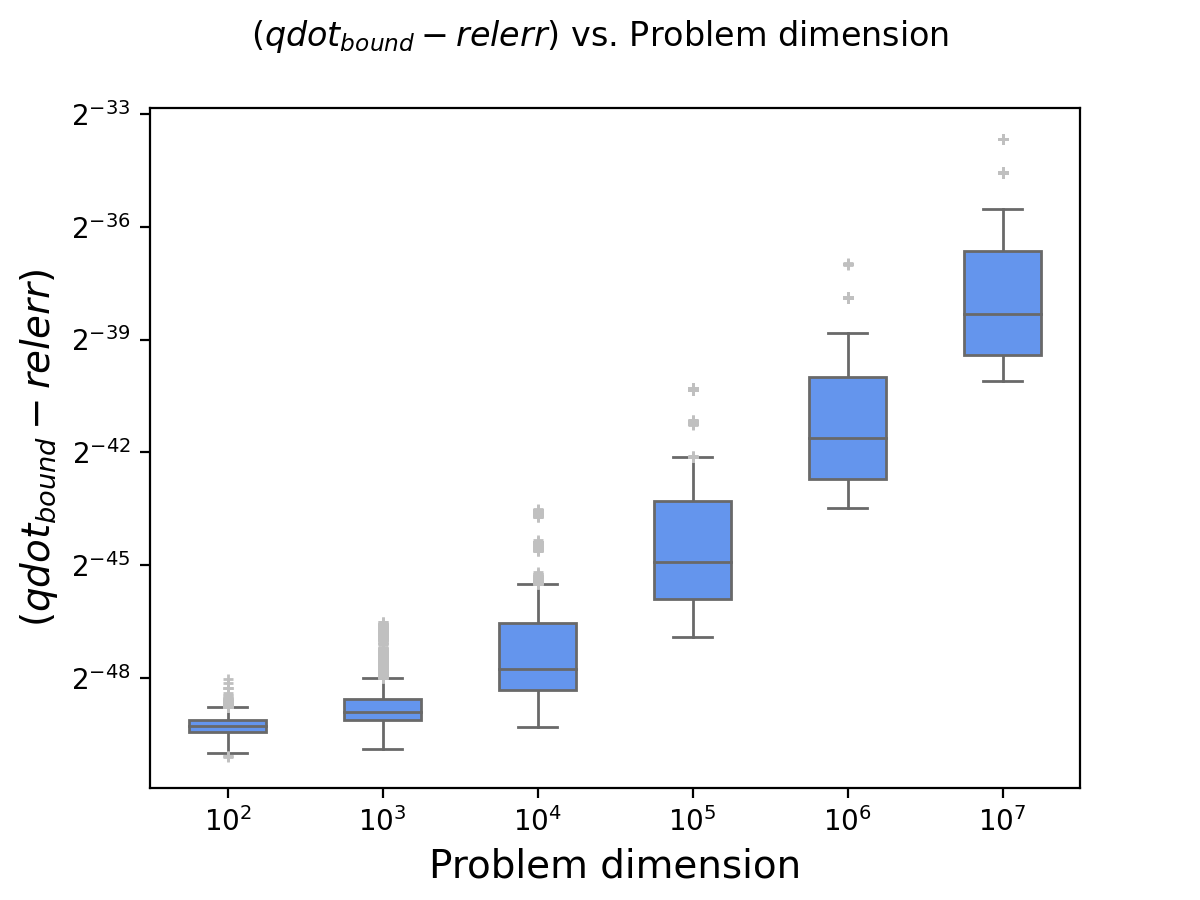}
    \caption{Distribution $\mathcal{A}$. For each problem dimension, bar and whisker plots are generated from using \qdot \ with 1900 sampled array pairs.}
    \label{fig:verify-uniform}
  \end{subfigure}
  \hfill
  \begin{subfigure}[b]{0.48\textwidth}
    \includegraphics[width=\textwidth]{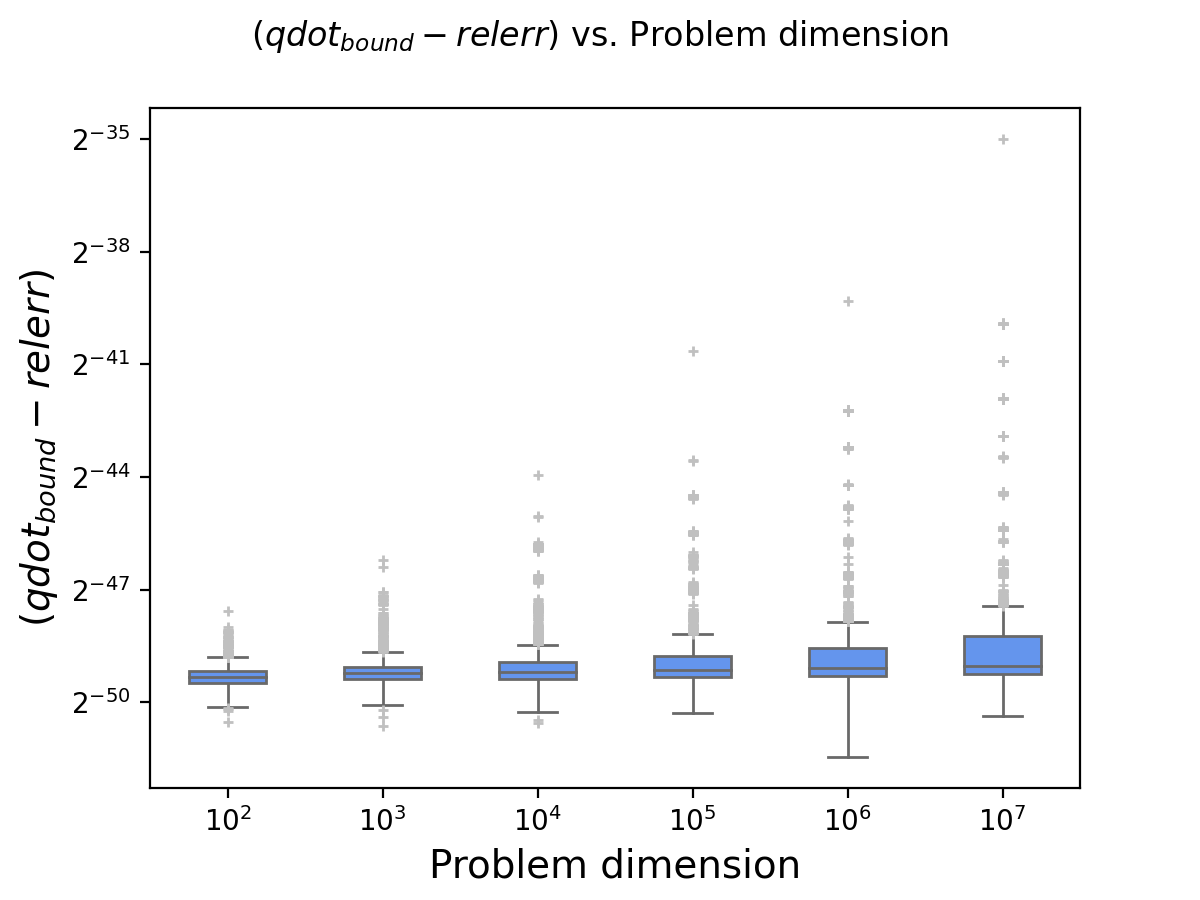}
    \caption{Distribution $\mathcal{B}$. For each problem dimension, bar and whisker plots are generated from using \qdot \ with 1500 sampled array pairs.}
    \label{fig:verify-normal}
  \end{subfigure}
  \caption{\textbf{Empirically Verifying qdot Error Bound}. For each problem dimension and distribution, we provide a box and whisker plot for the value $(qdot_{bound} - relerr)$ after using \qdot \ to approximate the dot product and \texttt{cublasDdot} to compute the \dpr \ precision dot product for each sampled array.}
  \label{fig:verify-bound}
\end{figure}

In the following experiments, we evaluate approximation effectiveness, efficiency and speedup on our sampled data problem. We vary the \textbf{qdot} tolerance $\varepsilon$, from $\texttt{1e-15}$ to $\texttt{1e-1}$ successively by a factor of $10$, for data sampled from distributions $\mathcal{A}$ and $\mathcal{B}$, each using three different parameter values, $t \in \{5, 9, 13\}$. The experiments were carried out for problems of dimension $\{ \texttt{1e5}, \texttt{1e6}, \texttt{1e7}, \texttt{1e8}\}$, and for each \textbf{qdot} tolerance value, we sample 50 pairs of arrays. Here, the results were similar for both distributions so we omit results for $\mathcal{A}$. 
%For each problem dimension we sampled 50 array pairs from the distribution in 2, approximated their dot product with \textbf{qdot}, and computed the relative approximation error, denoted $relerr$. 
%We plot the average value of $relerr$ as well as the minimum and maximum value of $relerr$ observed over all samples when the qdot tolerance value is varied from $[\texttt{1e-16}, \texttt{1e-1}]$. 

Figure~\ref{fig:effectiveness} illustrates the results for approximation effectiveness, obtained by computing the relative error, $relerr$, between \textbf{qdot} and the \dpr \ precision dot product. For the plot of each problem dimension, the solid line represents the mean and the shaded region of the same color represents the min and max corresponding to that \textbf{qdot} tolerance. In each of these plots, we include a dashed line denoting the qdot tolerance. The approximation effectiveness is then the distance, or gap, between the \textbf{qdot} tolerance and these plots. While the approximation effectiveness appears to be somewhat dependent on the array dimension, particularly for small $t$, we find that the distribution parameter, $t$, plays a larger role in the approximation effectiveness. In particular, when sampling from a wide enough exponent range, the approximation effectiveness approaches the optimal value and is essentially the same for all of the array dimensions we tested.

\begin{figure}[!t]
  \begin{subfigure}[b]{0.32\textwidth}
    \includegraphics[width=\textwidth]{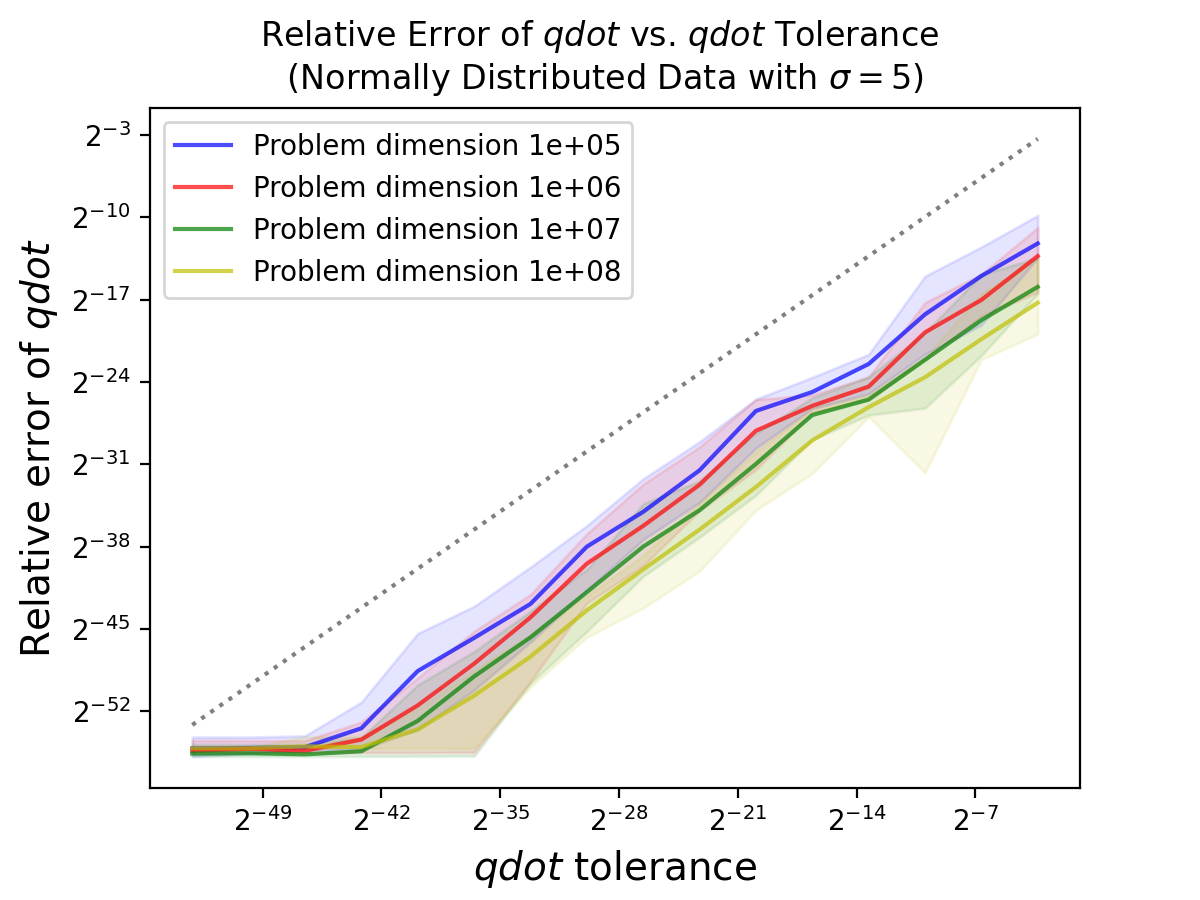}
    \caption{Distribution $\mathcal{B}$ with $t = 5$.}
    \label{fig:effect-normal-a}
  \end{subfigure}
  \hfill
  \begin{subfigure}[b]{0.32\textwidth}
    \includegraphics[width=\textwidth]{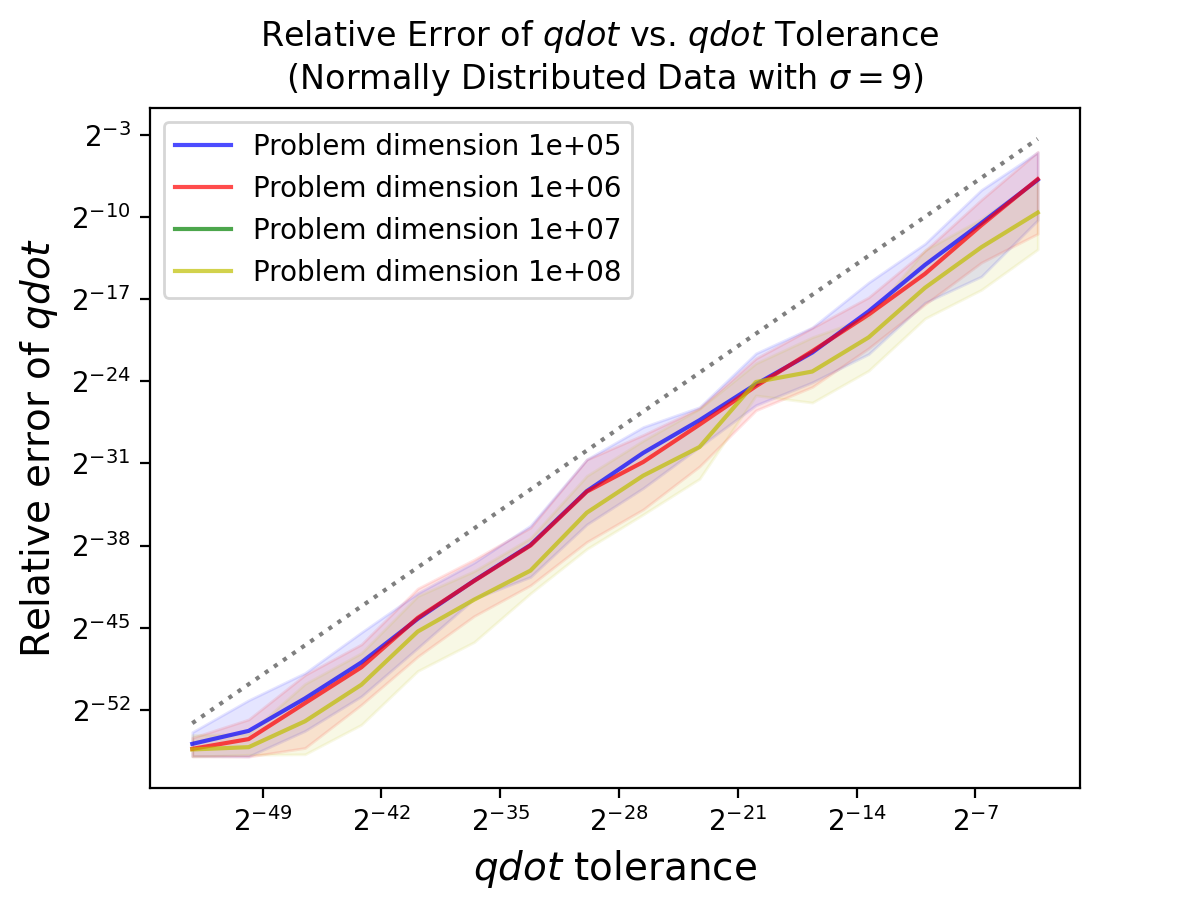}
    \caption{Distribution $\mathcal{B}$ with $t = 9$.}
    \label{fig:effect-normal-b}
  \end{subfigure}
  \hfill
  \begin{subfigure}[b]{0.32\textwidth}
    \includegraphics[width=\textwidth]{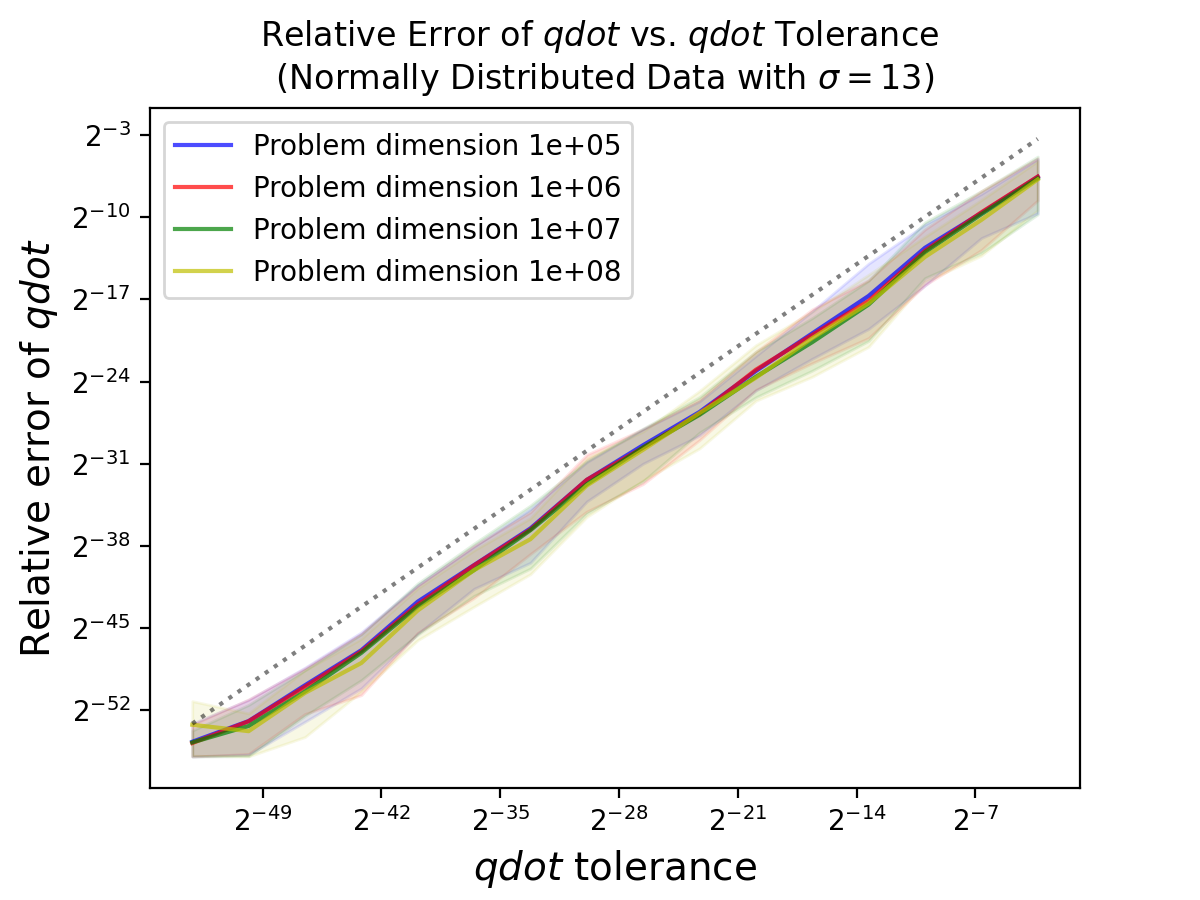}
    \caption{Distribution $\mathcal{B}$ with $t = 13$.}
    \label{fig:effect-normal-c}
  \end{subfigure}
  \caption{\textbf{qdot Approximation Effectiveness Primarily Dependent on Distribution of Floating Point Exponents}. Plots of the relative approximation error as a function of qdot tolerance are provided for varying problem dimensions across different array distributions. The dashed line represents the value of the qdot tolerance. The gap between the dashed line and each solid line corresponds to the approximation effectiveness. As the floating point exponents are sampled from distributions with a wider range, the relative error approaches the qdot tolerance value (dashed line).}
  \label{fig:effectiveness}
\end{figure}

Figure~\ref{fig:speedup} illustrates the results for approximation speedup. From the definition in (\ref{eq:approx-speedup}), this value is given by $\frac{\text{\dpr \ precision dot product run time}}{\text{\textbf{qdot} computation phase run time}}$ in our experiments. 
%\purple{The approximation speedup indicates an upper bound on the speedup achievable by \textbf{qdot} if the parameter selection algorithm did not have any cost.} 
As expected by the design of our approximate kernel, the approximation speedup is dependent on the distribution of the floating point exponents, array size, and \textbf{qdot} tolerance. The results further indicate that the computation phase of \textbf{qdot} is capable of providing speedup over the \dpr \ precision dot product. 

In evaluating the approximation efficiency, we observed that for all the test cases considered in these experiments, the computed approximation efficiency was essentially constant at roughly $\frac{1}{40} = 0.025$. As a result, we omit the figures here. This value of approximation efficiency indicates that the parameter selection algorithm is expensive compared to the \dpr \ precision dot product, and precludes its practical use in applications at runtime.
However, there is potential to improve the efficiency of \textbf{qdot} by designing alternative parameter selection algorithms. The most costly step of the current parameter selection algorithm is the sorting step in line 5 of Algorithm~\ref{alg:qdot-param}. Designing a parameter selection algorithm that either reduces this cost or does not rely on sorting could result in a more efficient approximation technique, albeit at some cost to its approximation effectiveness.
%However, we note that the cost of the parameter selection phase dominates the total time of applying \textbf{qdot}. In particular, we observed that running the full \textbf{qdot} algorithm is between 25 and 40 times slower than the \dpr \ precision dot product depending on the problem dimension and distribution from which the floating point exponents are sampled. This is not surprising as approximation efficiency was not the primary focus in the design of our approach. However, %alternative techniques could be developed to design a more efficient parameter selection algorithm.
%However, as the goal of this parameter selection phase was focused on effectiveness 
%we believe there is potential to improve the efficiency of \textbf{qdot} by designing alternative parameter selection algorithms. The most costly step of the current parameter selection algorithm is the sorting step in line 5 of Algorithm~\ref{alg:qdot-param}. Designing a parameter selection algorithm that either reduces this cost or does not rely on sorting could result in a more efficient approximation technique, albeit at the cost of effectiveness.

\begin{figure}[!t]
\iffalse %%%%%%%%%%%%%%%%%%%%%%%%%%%%%%%
  \begin{subfigure}[b]{0.32\textwidth}
    \includegraphics[width=\textwidth]{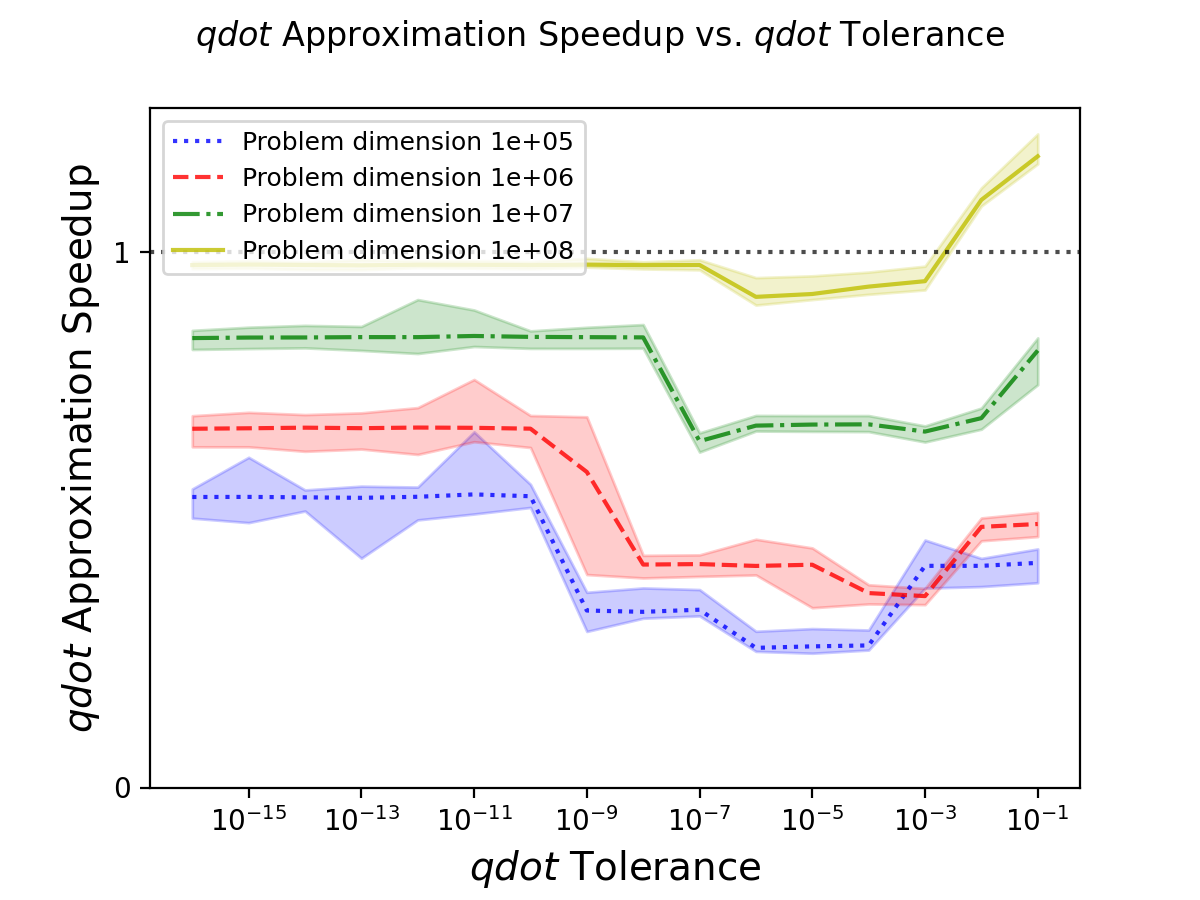}
    \caption{Distribution $\mathcal{A}$ with $t = 10$.}
    \label{fig:speedup-uniform-a}
  \end{subfigure}
  \hfill
  \begin{subfigure}[b]{0.32\textwidth}
    \includegraphics[width=\textwidth]{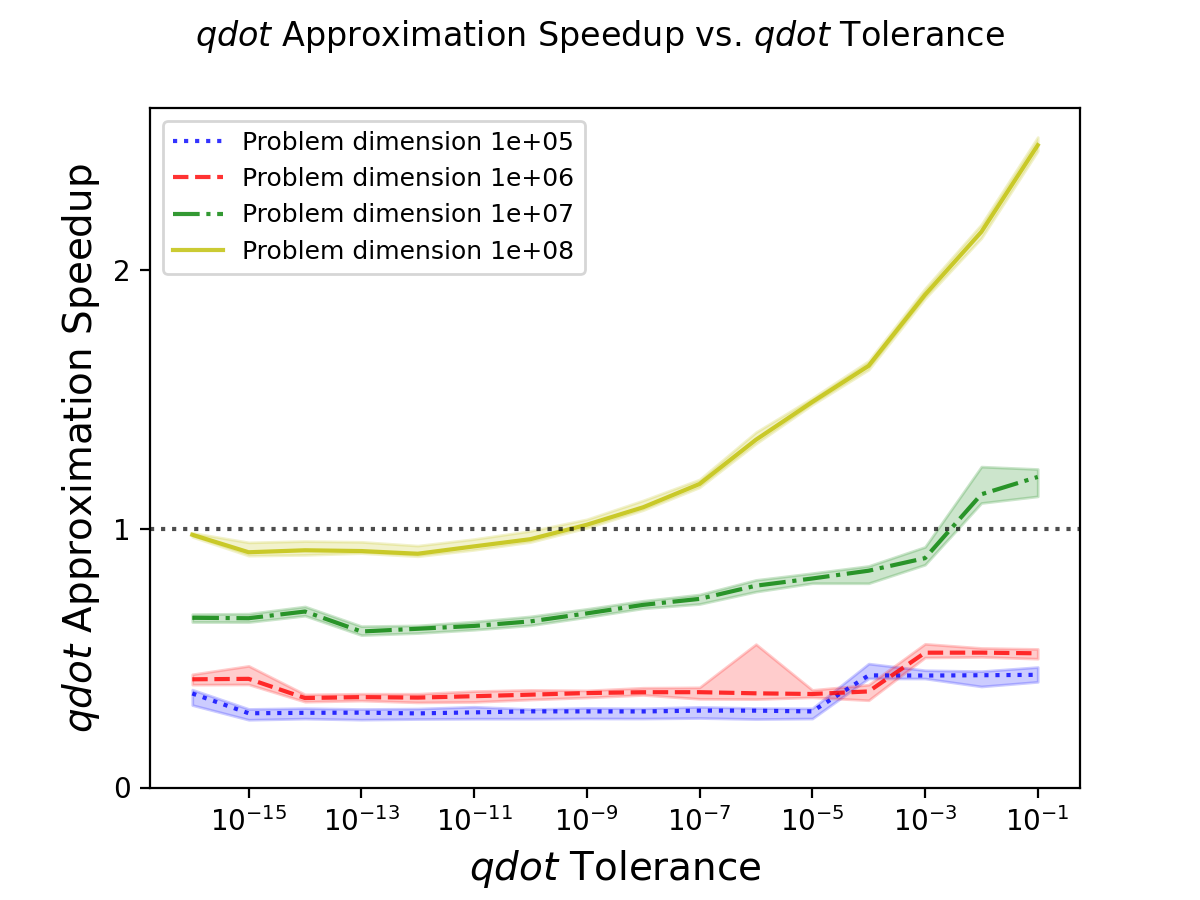}
    \caption{Distribution $\mathcal{A}$ with $t = 25$.}
    \label{fig:speedup-uniform-b}
  \end{subfigure}
  \hfill
  \begin{subfigure}[b]{0.32\textwidth}
    \includegraphics[width=\textwidth]{zfp_stability_paper copy/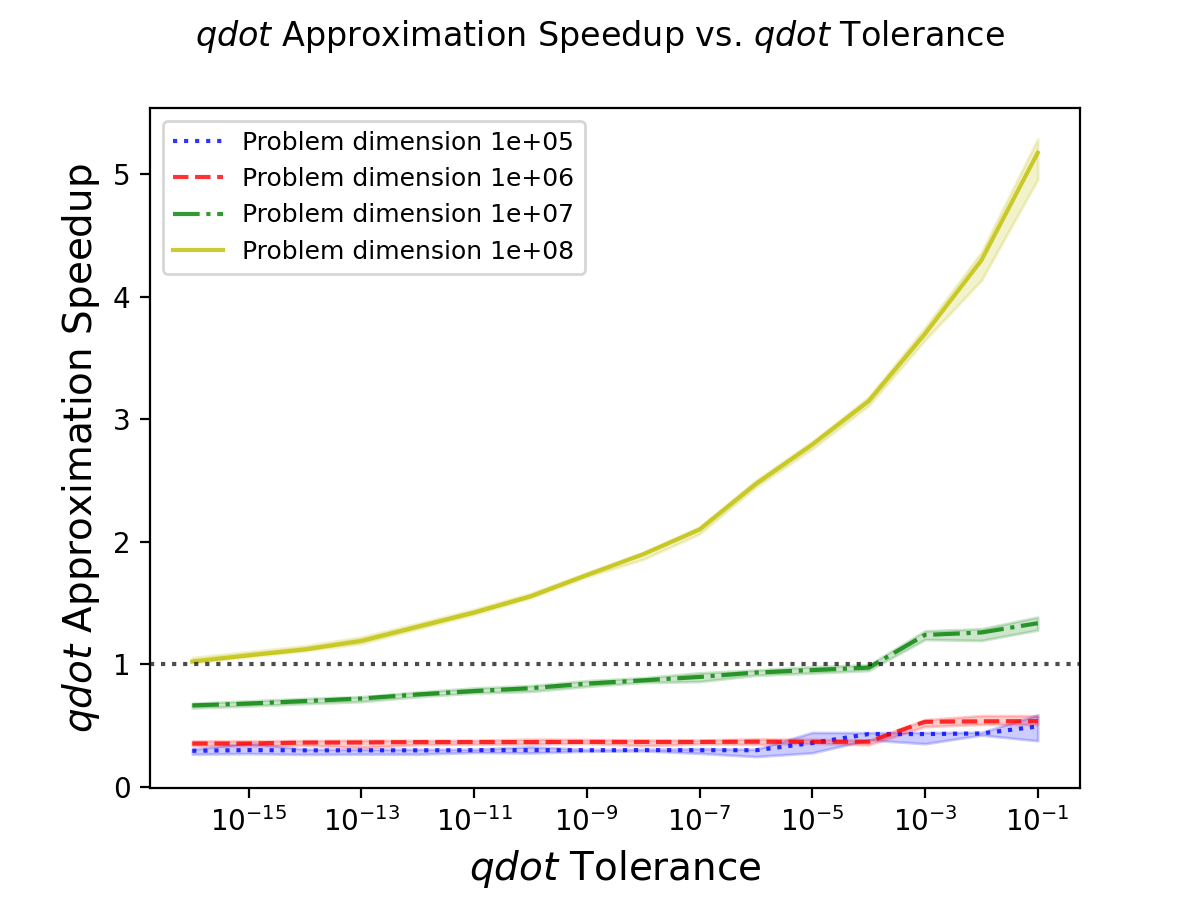}
    \caption{Distribution $\mathcal{A}$ with $t = 40$.}
    \label{fig:speedup-uniform-c}
  \end{subfigure}
  \hfill
\fi %%%%%%%%%%%%%%%%%%%%%%%%%%%%%%%%%%%
  \begin{subfigure}[b]{0.32\textwidth}
    \includegraphics[width=\textwidth]{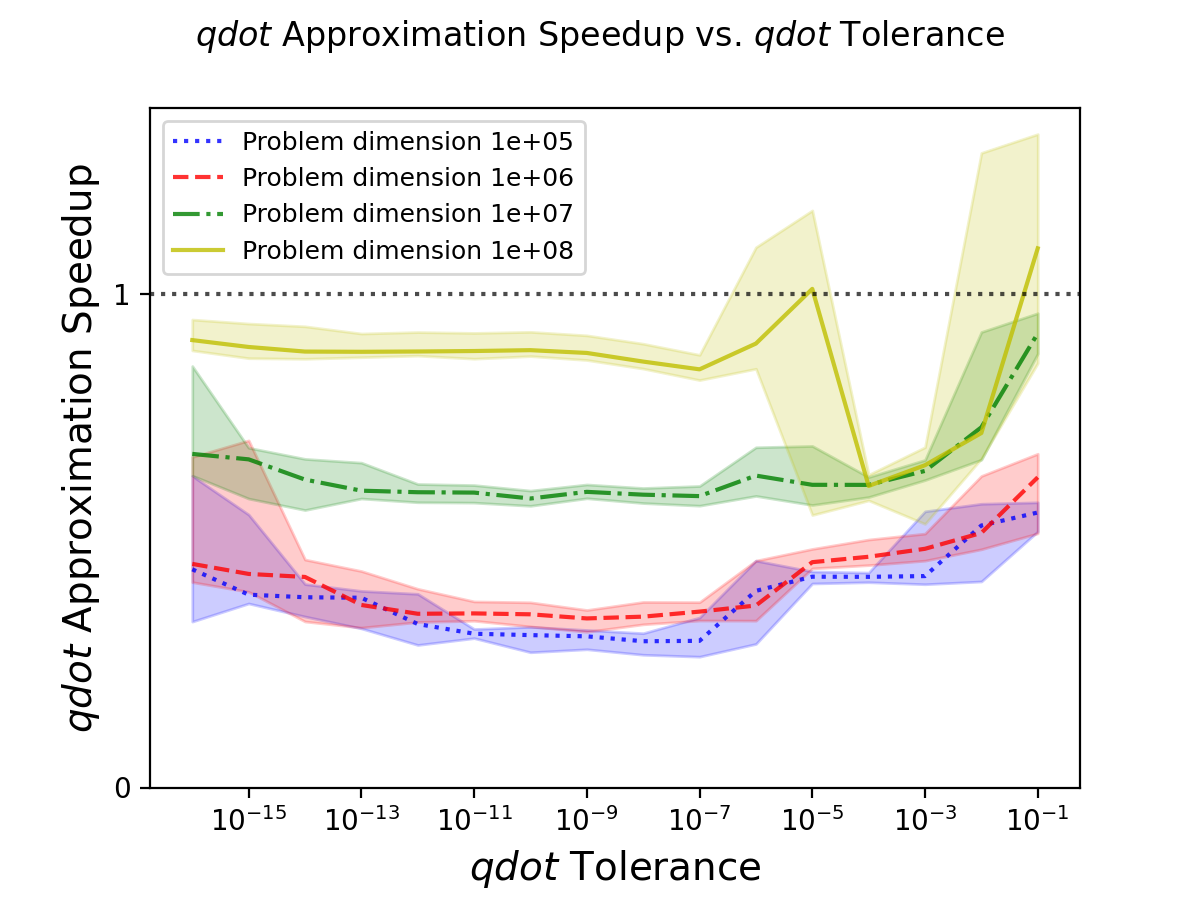}
    \caption{Distribution $\mathcal{B}$ with $t = 5$.}
    \label{fig:speedup-normal-a}
  \end{subfigure}
  \hfill
  \begin{subfigure}[b]{0.32\textwidth}
    \includegraphics[width=\textwidth]{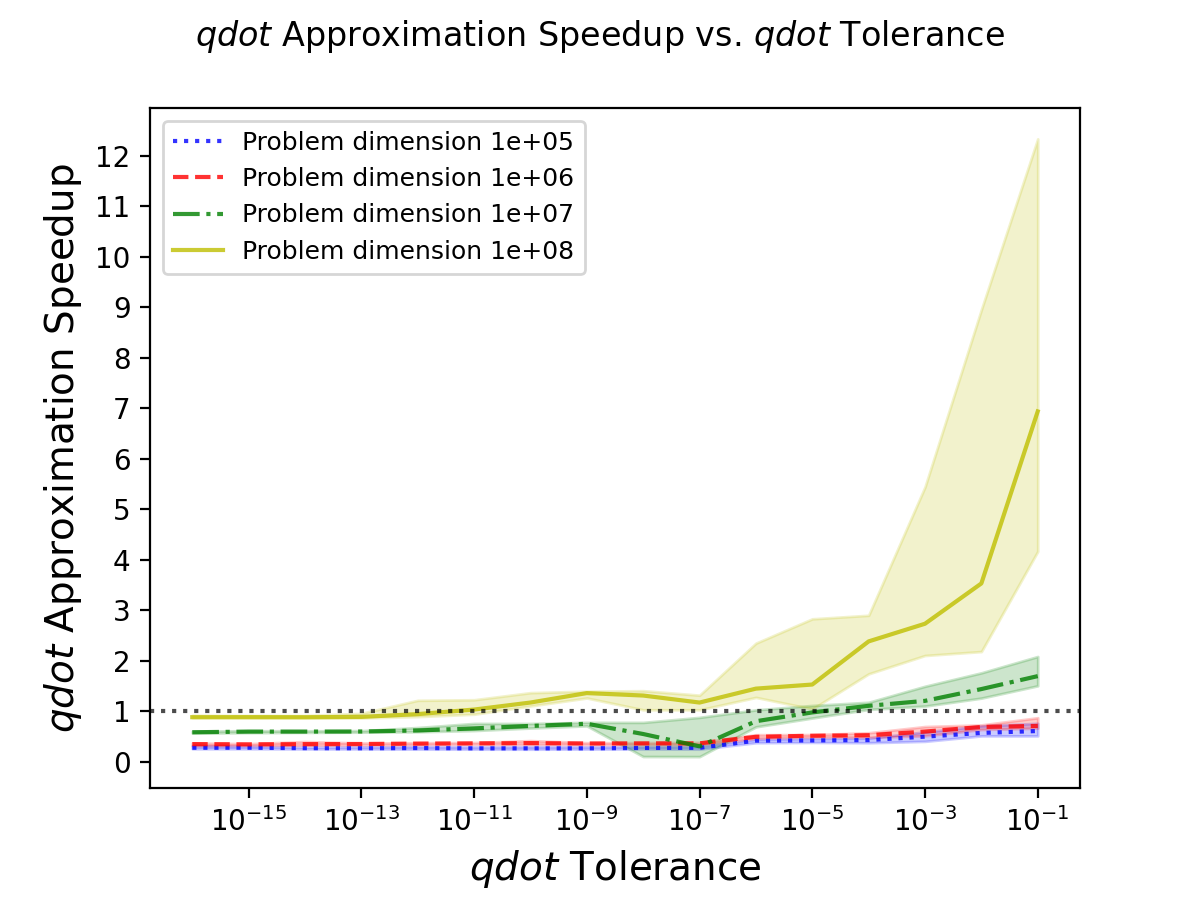}
    \caption{Distribution $\mathcal{B}$ with $t = 9$.}
    \label{fig:speedup-normal-b}
  \end{subfigure}
  \hfill
  \begin{subfigure}[b]{0.32\textwidth}
    \includegraphics[width=\textwidth]{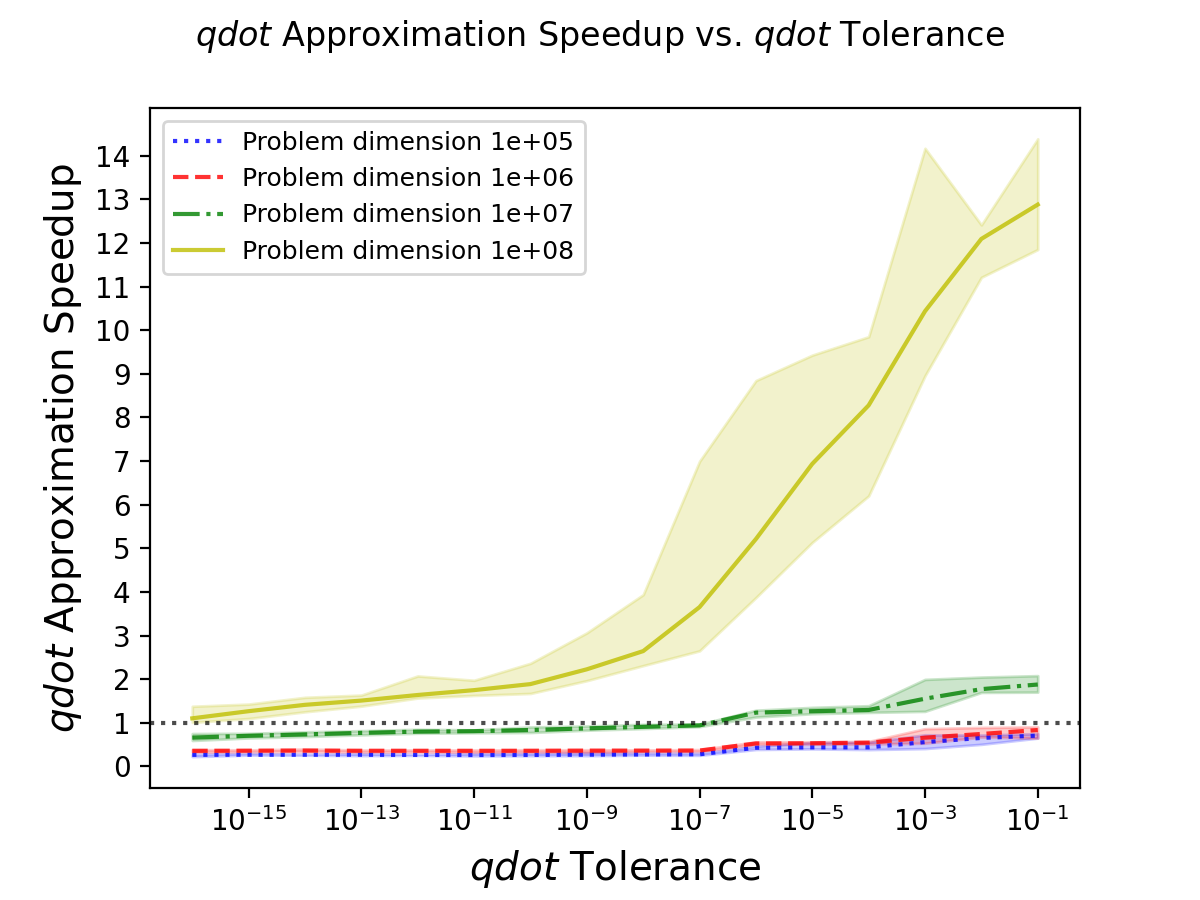}
    \caption{Distribution $\mathcal{B}$ with $t = 13$.}
    \label{fig:speedup-normal-c}
  \end{subfigure}
  \caption{\textbf{qdot Approximation Speedup Dependent on Problem Size and Exponent Distribution}. By varying problem size, \qdot \  tolerance, and distribution parameter, we can visualize how the approximation speedup of \qdot \ is dependent on these three factors. For each problem dimension, the line corresponds to the mean approximation speedup while the shaded region indicates the minimum and maximum approximation speedup at each \qdot \ tolerance value.}
  \label{fig:speedup}
\end{figure}

\subsection{qdot with Conjugate Gradient}
\label{ex:experiment-cg}
Conjugate Gradient (CG) \cite{golub13,Hestenes1952MethodsOC} is a popular iterative technique for solving the linear system $\bm{A} \bm{x} = \bm{b}$, given a positive definite matrix $\bm{A} \in \mathbb{R}^{n \times n}$ and a vector $\bm{b} \in \mathbb{R}^n$. Recent works have focused on introducing approximation into linear solvers, such as CG, without compromising the performance or convergence properties of the solver \cite{menon2018}. 
%, approx. CG/lin solvers}. 
In the following experiments, we evaluate the impact of introducing approximation into CG
%In these experiments, we test an approximate linear solver where approximation is introduced 
by integrating \textbf{qdot} in a \dpr \ precision CG application. Excluding the matrix vector product, each iteration of CG requires the computation of two dot products. By replacing these dot products with \textbf{qdot}, we can tune the amount of approximation incorporated into the resulting approximate linear solver kernel by varying the value of the \textbf{qdot} error tolerance. %We use \textbf{qdot} within the HPCCG benchmark application to approximate the computation of two dot products in each iteration. 
Algorithm~\ref{alg:acg} presents pseudocode for our Approximate Conjugate Gradient (ACG) kernel, specifying the exact operations computed using \textbf{qdot}.
%and can be found in Algorithm~\ref{alg:acg}.
The implementation of Algorithm~\ref{alg:acg} was obtained by using our implementation of \textbf{qdot} in place of the \dpr \ precision dot products in HPCCG, a publicly available CG benchmark for a 3D chimney domain \cite{mantevo}. 

In our experiments with ACG, we solve problems of varying dimension with varying \textbf{qdot} and CG tolerances and visualize how the precision utilized by \textbf{qdot} affects the number of iterations required for convergence. We use the HPCCG application code to generate four different problems: Two 2D problems with dimension $100 \times 100 \times 1$ and $1000 \times 1000 \times 1$ and two 3D problems with dimension $100 \times 100 \times 10$ and $1000 \times 1000 \times 10$. Note that the matrix $\bm{A}$ generated for each problem is of dimension $n \times n$, where $n$ is equal to the product of the problem dimensions. Hence, solving these problems using CG requires computing dot products for arrays of dimension \texttt{1e4} and \texttt{1e6} for the 2D problems and \texttt{1e5} and \texttt{1e7} for the 3D problems. 
%\green{The underlying problem solved by HPCCG is rather simple since the benchmark is designed to evaluate performance on parallel architectures, rather than the complexity of the underlying problem.} 
For the examples evaluated here, a preconditioner was not needed as the matrices were relatively well-conditioned. 
%\purple{The condition number for the matrices in the 2D problems is $O(\texttt{1e0})$ and the condition number for the matrices in the 2D problems is $O(\texttt{1e2})$. }
%We used the \texttt{condest} function in MATLAB to estimate the condition number for each matrix $\bm{A}$ generated by HPCCG. For the two 2D problems the condition number of both matrices is $1.8421$ and for the two 3D problems the condition number of both matrices is $36.1133$. 
In all of these experiments, we used a fixed linear solver convergence tolerance of $\tau = \texttt{1e-8}$. These experiments seek to ascertain how much approximation can be introduced into the CG algorithm, using \textbf{qdot}, while preserving its convergence properties. Thus, we vary the \textbf{qdot} tolerance successively by a factor of $10$ from $\texttt{1e-16}$ to $\texttt{1e3}$, to identify the largest tolerance that resulted in ACG converging in the same number of iterations as the \dpr \ precision CG solver, which we denote simply by CG. 
%\purple{While tolerance values exceeding \texttt{1} seem rather extreme, results presented in  Section~\ref{sec:experiment-errorbound} demonstrated that this is not unusual, and could lead to an approximation with acceptable relative error. This is due to the fact that the error bound captures the worst-case scenario error for all arrays of size $n$. Thus, in some cases, the \textbf{qdot} tolerance could be loose, and yet provide an acceptable amount of approximation. }
%By varying the \textbf{qdot} tolerance parameter up to $\texttt{1e3}$ in these experiments, we ensured that we would satisfy feasible amounts of approximation in ACG.
\begin{center}
\begin{algorithm}[t]
\begin{algorithmic}[1]
\STATE{\textit{Inputs}: Array $\bm{b} \in \mathbb{R}^n$, symmetric positive definite matrix $\bm{A} \in \mathbb{R}^{n \times n}$, initial guess for solution $\bm{x}_0 \in \mathbb{R}^n$, CG error tolerance $\tau$, \textbf{qdot} error tolerance $\varepsilon$.}
\STATE{\textit{Output}: Array $\bm{x} \in \mathbb{R}^n$ satisfying $\| \bm{b} - \bm{A} \bm{x} \| \leq \tau$.}
\STATE{$\bm{r}_0 \gets \bm{b} - \bm{A} \bm{x}_0$, $\bm{p}_0 \gets \bm{r}_0$, $k \gets 0$ \hfill Initialize residual value and index}
\STATE{$c_0 \gets \textbf{qdot}(\bm{r}_0, \bm{r}_0, \varepsilon)$ \hfill Approximate dot product of residual using \textbf{qdot}}
\WHILE{$\| \bm{r}_{k} \| > \tau$}
\STATE{$\bm{q}_k \gets \bm{A} \bm{p}_k$ \hfill Compute matrix vector product}
\STATE{$\alpha_k \gets c_k / \textbf{qdot}(\bm{p}_k, \bm{q}_k, \varepsilon)$ \hfill Compute $\alpha_k$ using \textbf{qdot}}
\STATE{$\bm{x}_{k+1} \gets \bm{x}_{k} + \alpha_k \bm{p}_k$ \hfill Update guess for solution}
\STATE{$\bm{r}_{k+1} \gets \bm{r}_{k} - \alpha_k \bm{q}_k$ \hfill Update residual}
\STATE{$c_{k+1} \gets \textbf{qdot}(\bm{r}_{k+1}, \bm{r}_{k+1}, \varepsilon)$ \hfill Approximate dot product of residual using \textbf{qdot}}
\STATE{$\beta_k \gets c_{k+1} / c_k$ \hfill Compute $\beta_k$}
\STATE{$\bm{p}_{k+1} \gets \bm{r}_{k+1} + \beta_k \bm{p}_k$ \hfill Update $\bm{p}_k$}
\STATE{$k \gets k+1$ \hfill Increment $k$ \ }
\ENDWHILE
\end{algorithmic}
\caption{\textbf{ACG}: Approximate Conjugate Gradient Kernel with \textbf{qdot}}
\label{alg:acg}
\end{algorithm}
\end{center}
%Using the previous plots, we can now identify the \textbf{qdot} tolerance value for each problem size and \textbf{qdot} tolerance that results in the same performance as the original CG application. 

By choosing the largest \textbf{qdot} tolerance, $\varepsilon$, yielding the same number of iterations as CG, the resulting ACG solver utilizes the maximum amount of approximation that  \textbf{qdot} can introduce, without impacting the convergence of the original CG solver. For the 2D problems of dimension $100 \times 100 \times 1$ and $1000 \times 1000 \times 1$, this corresponded to $\varepsilon = \texttt{1e0}$ and $\varepsilon = \texttt{1e3}$, respectively. For both 3D problems, of dimension $100 \times 100 \times 10$ and $1000 \times 1000 \times 10$, this corresponded $\varepsilon$ $\texttt{1e2}$. We note that \textbf{qdot} tolerance values exceeding \texttt{1.0} are not unusual since the error bound for \textbf{qdot} captures the worst-case scenario error for all arrays of size $n$. Thus, in some cases, the \textbf{qdot} tolerance could be loose, and yet provide an acceptable amount of approximation.

To illustrate the approximation introduced by \textbf{qdot}, we plot a breakdown of the percentage of each precision used by \textbf{qdot} at each iteration of Algorithm~\ref{alg:acg}. Figure~\ref{fig:hpccg-precision}(a) - (d) presents results for computing $\bm{r^Tr}$ and $\bm{p^TAp}$ for the 2D problems and Figure~\ref{fig:hpccg-precision}(e) - (h) presents corresponding results for the 3D problems. For the 2D problems, the results indicate that a significant fraction of the dot product computation can be performed in \hpr \ precision for the smaller dimension problem. For the larger dimension 2D problem, the results suggest a larger fraction could be \ppr. For the 3D problems, we observe that a majority of the computation can be performed in \hpr \  precision or \ppr. As the problem size increased, we observe the need to compute more components of the dot product in \spr \ precision. We note that while these results are not a rigorous analysis of the effect of approximating operations in CG, they provide insights into how approximating the dot product operation may impact CG. In particular, they underscore the importance of quantization in preserving the convergence properties of CG. For example, it is easy to see from the results that for larger 3D problems, one cannot guarantee that the convergence rate of CG would be preserved by using solely \hpr \ precision dot products.

%Additionally, for the problem of dimension $100 \times 100 \times 1$, we plot the precision corresponding used by each component at each iteration when transformed back to the problem mesh of dimension $100 \times 100 \times 1$. This plot provides insight into which regions of the mesh are amenable to higher levels of quantization.

\begin{figure}[!tbp]
  \begin{subfigure}[b]{0.47\textwidth}
    \includegraphics[width=\textwidth]{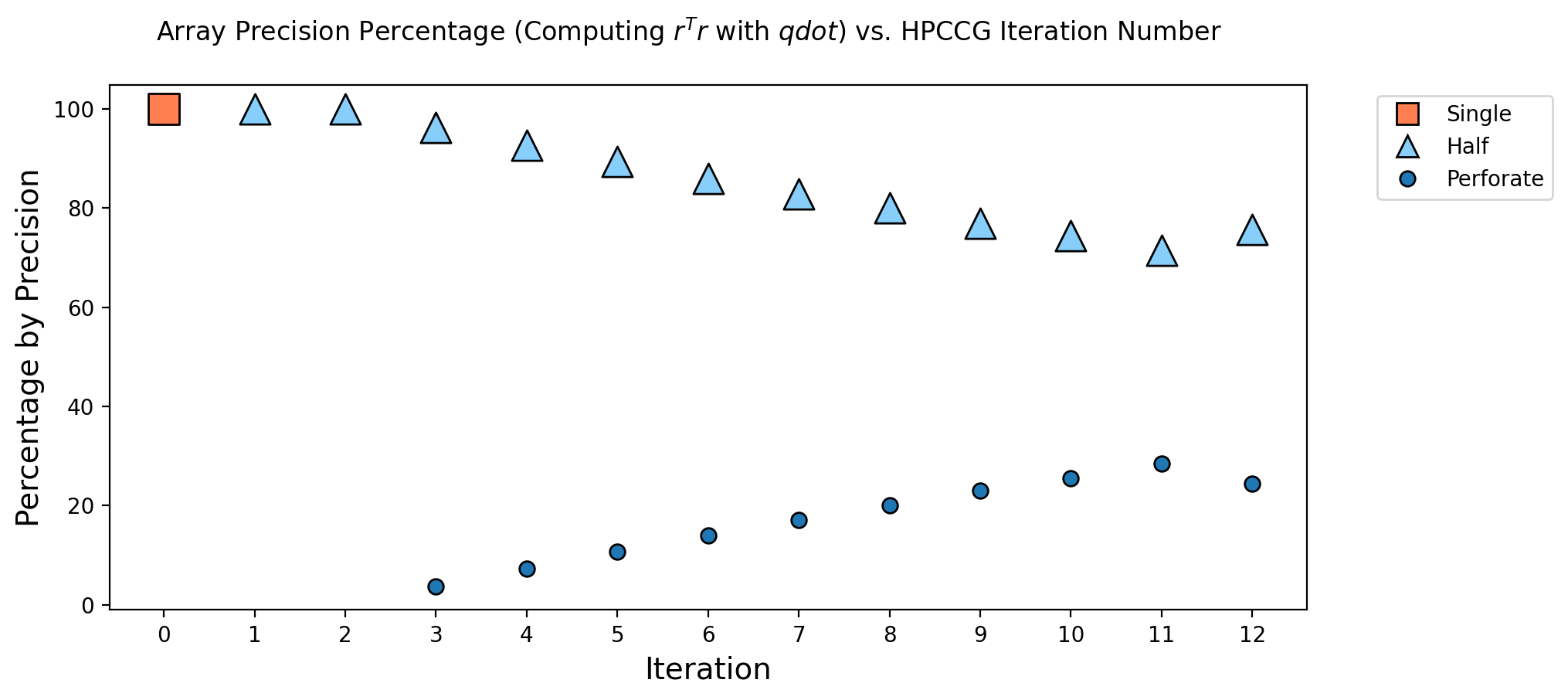}
    \caption{Computation of $\bm{r}^{\intercal} \bm{r}$ in ACG for problem\\ of Dimension $100 \times 100 \times 1$.}
    \label{fig:rtr-10000}
  \end{subfigure}
  \hfill
  \begin{subfigure}[b]{0.47\textwidth}
    \includegraphics[width=\textwidth]{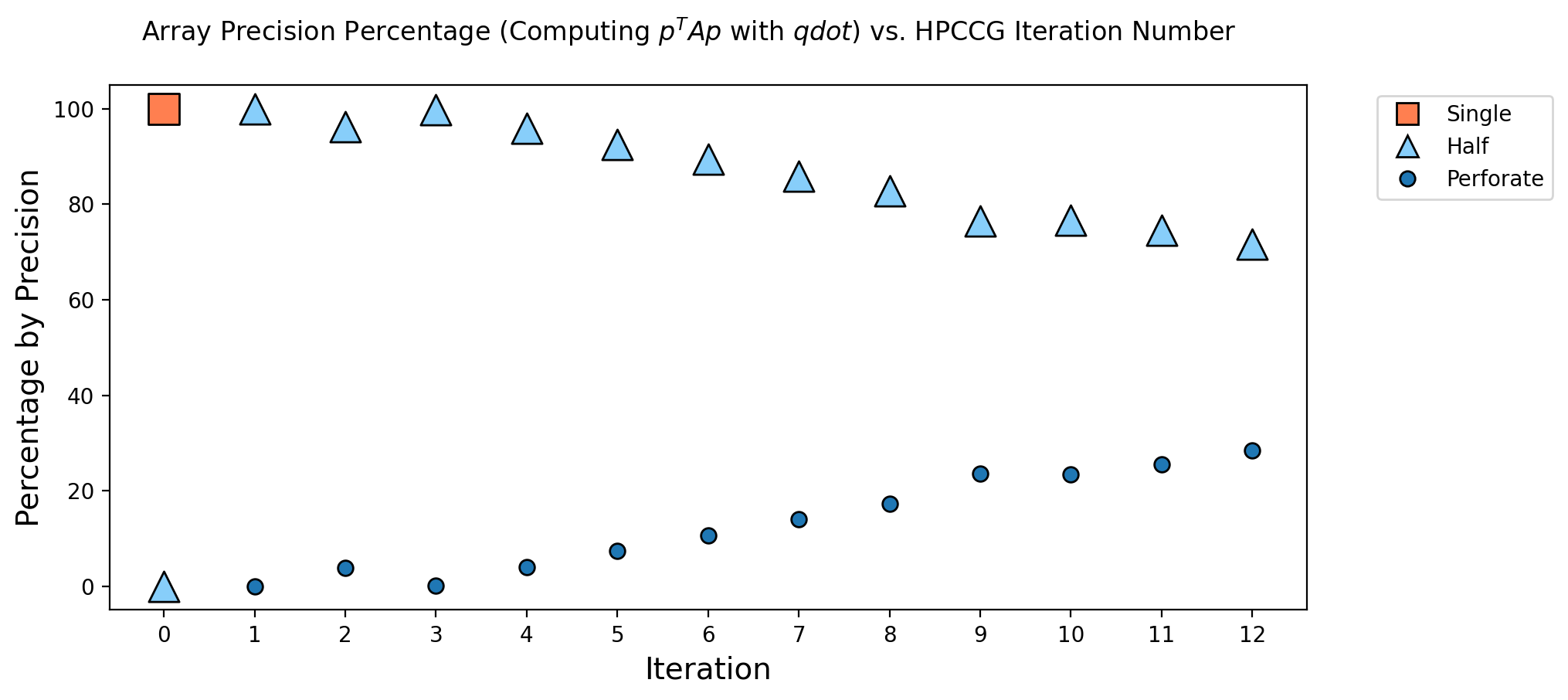}
    \caption{Computation of $\bm{p}^{\intercal} \bm{A} \bm{p}$ in ACG for problem\\ of Dimension $100 \times 100 \times 1$.}
    \label{fig:pAp-10000}
  \end{subfigure}
  \hfill
  \begin{subfigure}[b]{0.47\textwidth}
  \includegraphics[width=\textwidth]{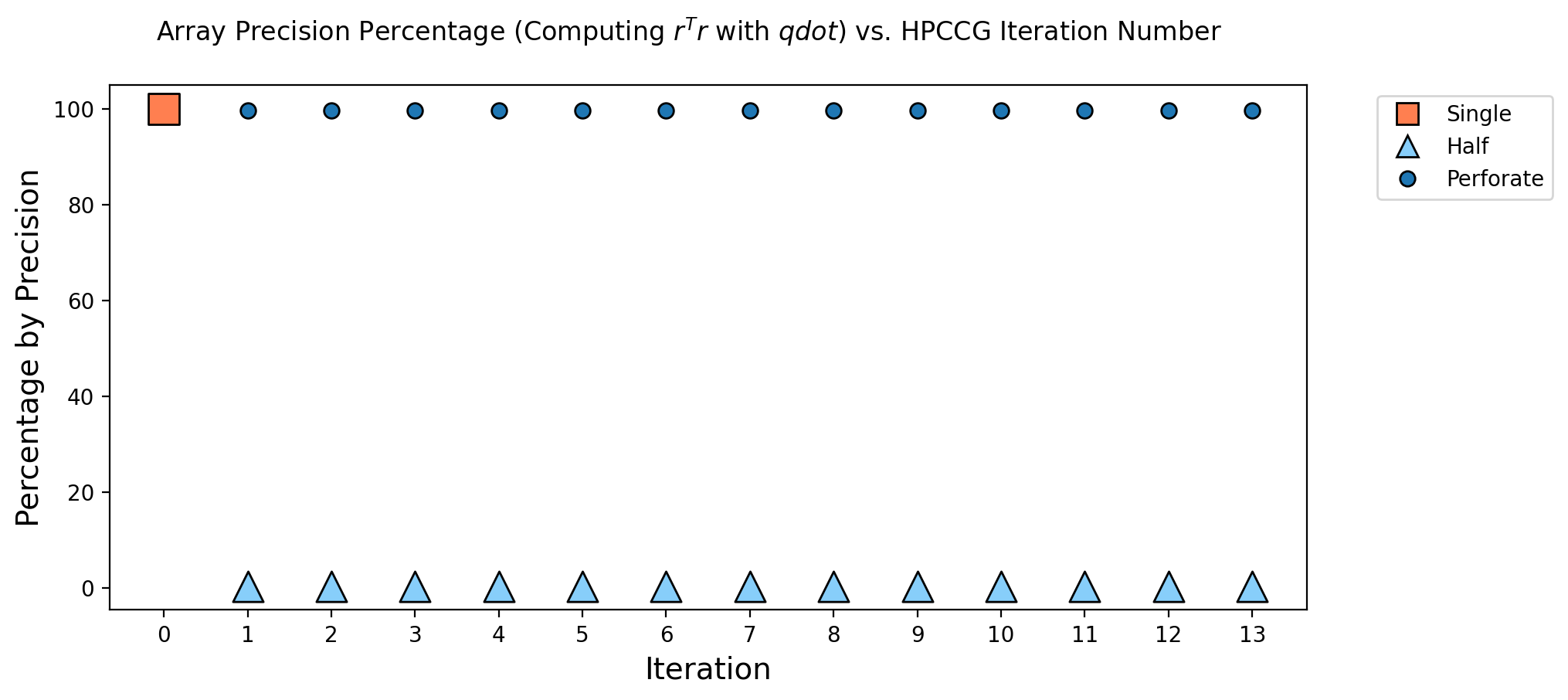}
    \caption{Computation of $\bm{r}^{\intercal} \bm{r}$ in ACG for problem\\ of Dimension $1000 \times 1000 \times 1$.}
  \end{subfigure}
  \hfill
  \begin{subfigure}[b]{0.47\textwidth}
    \includegraphics[width=\textwidth]{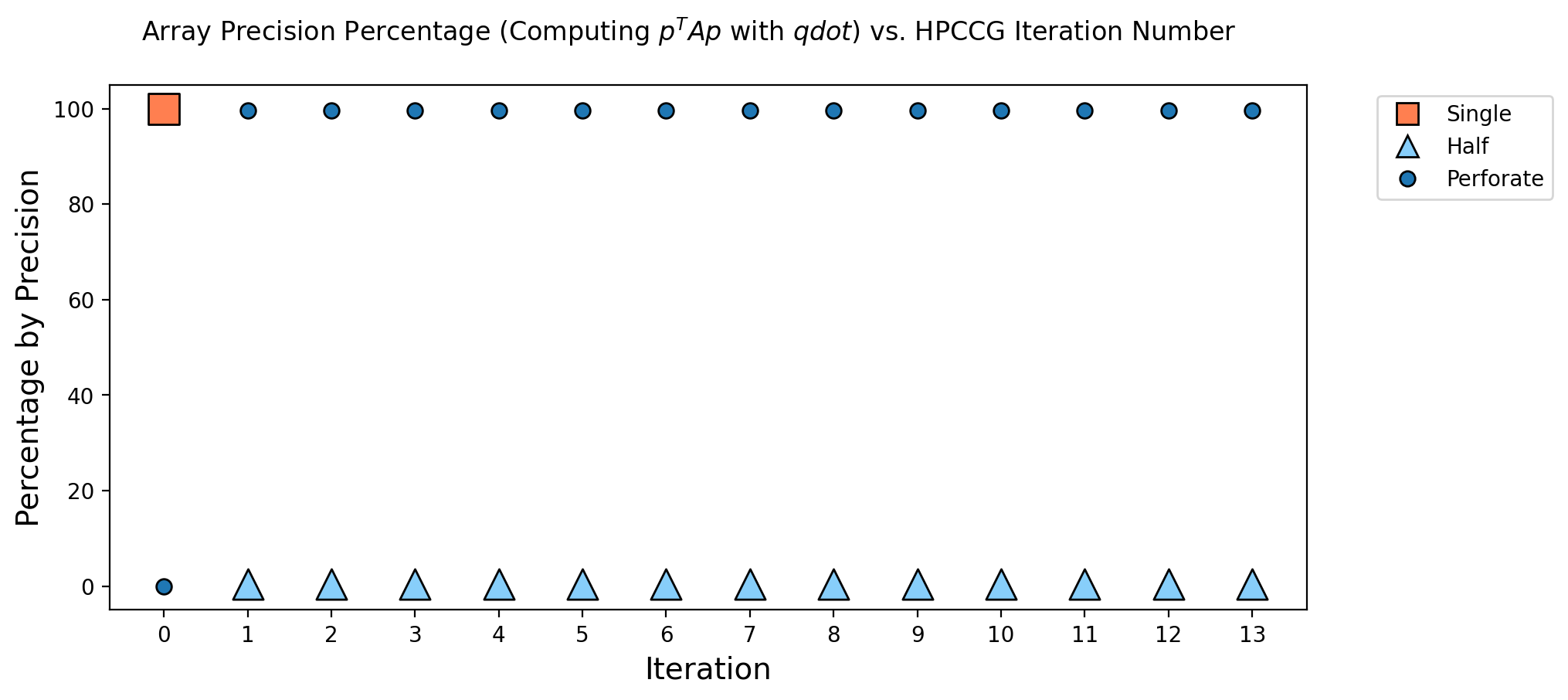}
    \caption{Computation of $\bm{p}^{\intercal} \bm{A} \bm{p}$ in ACG for problem\\ of Dimension $1000 \times 1000 \times 1$.}
  \end{subfigure}
  \hfill
  \begin{subfigure}[b]{0.47\textwidth}
  \includegraphics[width=\textwidth]{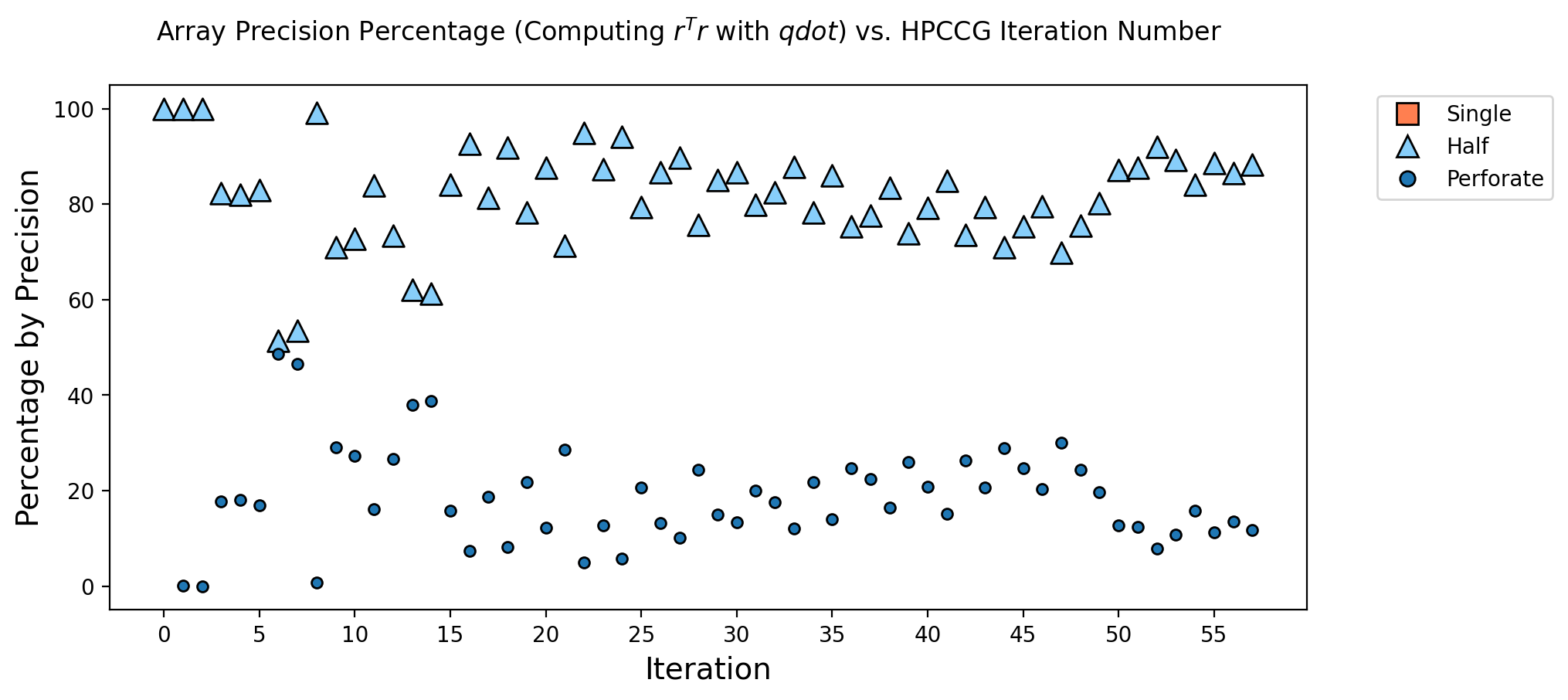}
    \caption{Computation of $\bm{r}^{\intercal} \bm{r}$ in ACG for problem\\ of Dimension $100 \times 100 \times 10$.}
  \end{subfigure}
  \hfill
  \begin{subfigure}[b]{0.47\textwidth}
    \includegraphics[width=\textwidth]{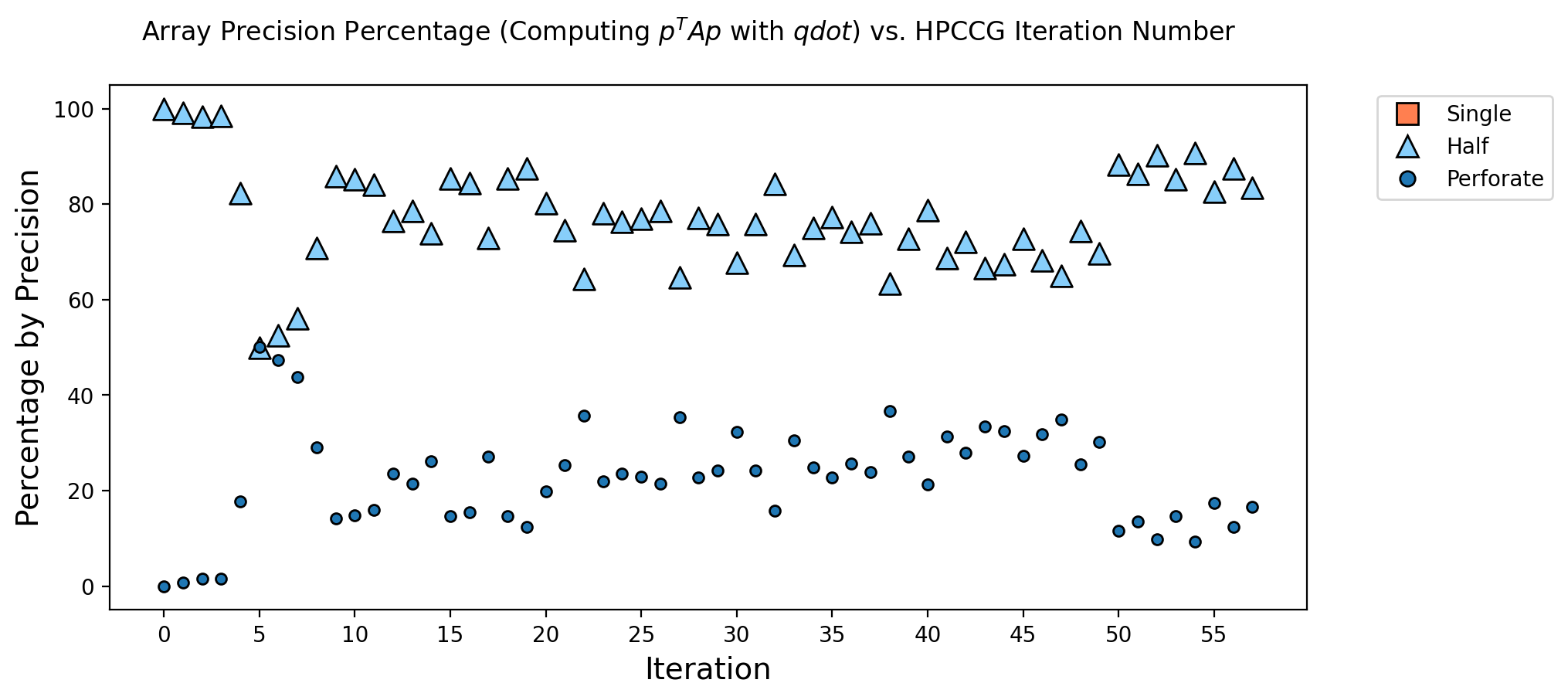}
    \caption{Computation of $\bm{p}^{\intercal} \bm{A} \bm{p}$ in ACG for problem\\ of Dimension $100 \times 100 \times 10$.}
  \end{subfigure}
  \hfill
  \begin{subfigure}[b]{0.47\textwidth}
  \includegraphics[width=\textwidth]{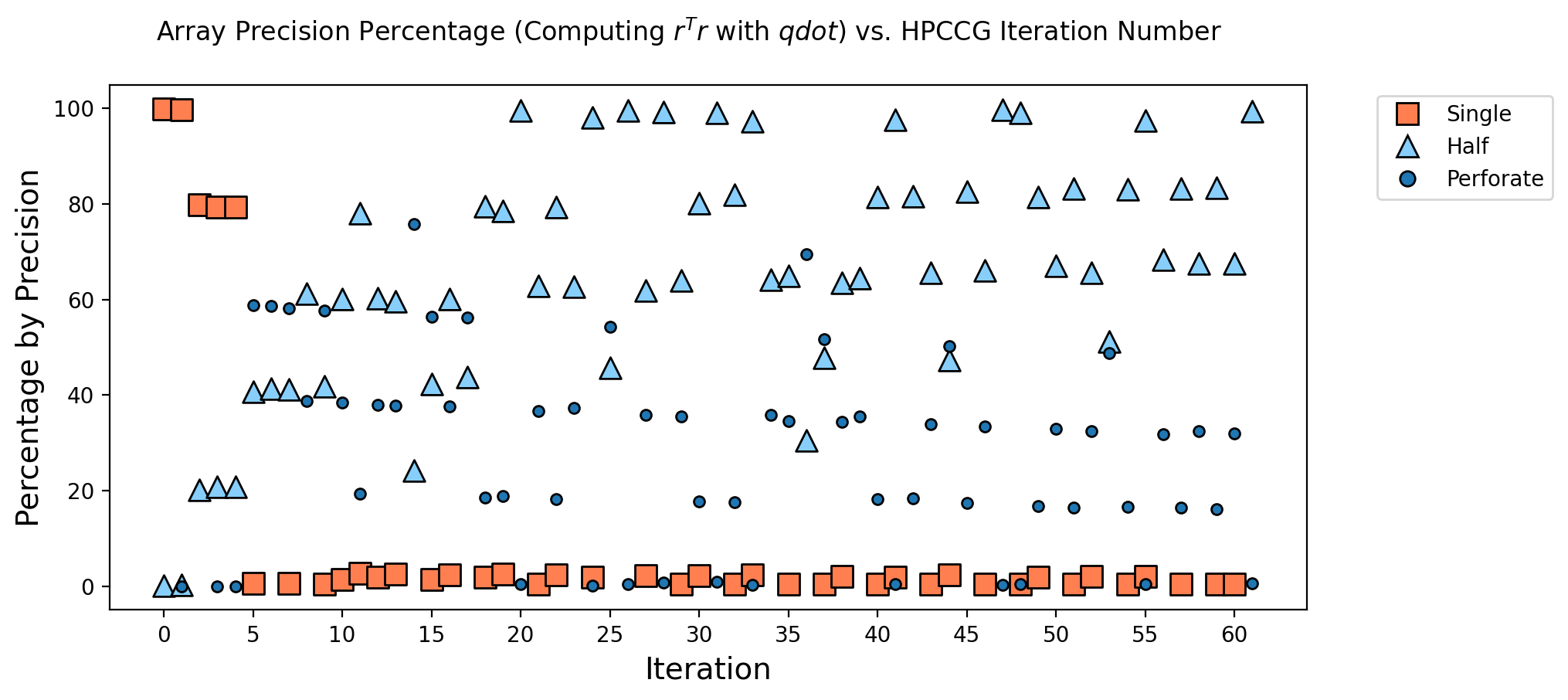}
    \caption{Computation of $\bm{r}^{\intercal} \bm{r}$ in ACG for problem\\ of Dimension $1000 \times 1000 \times 10$.}
  \end{subfigure}
  \hfill
  \begin{subfigure}[b]{0.47\textwidth}
    \includegraphics[width=\textwidth]{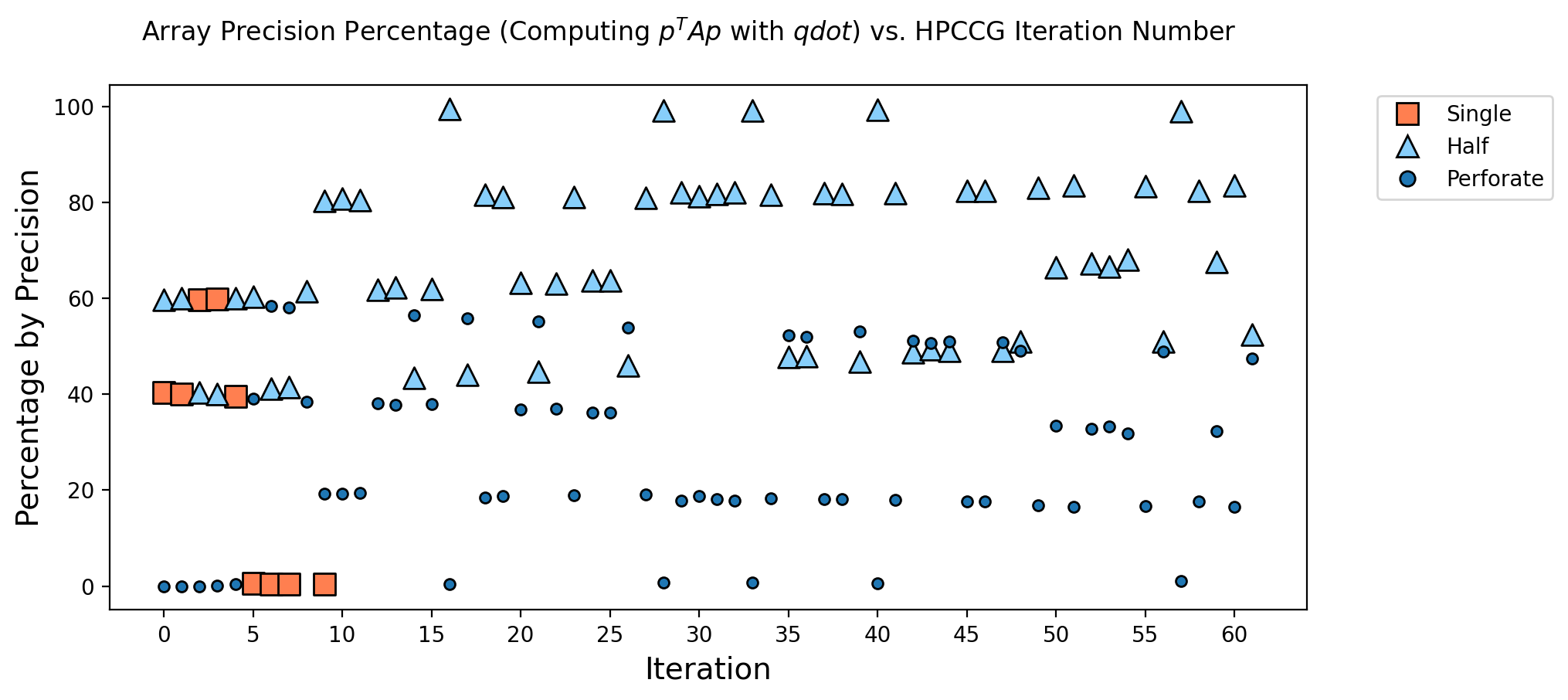}
    \caption{Computation of $\bm{p}^{\intercal} \bm{A} \bm{p}$ in ACG for problem\\ of Dimension $1000 \times 1000 \times 10$.}
  \end{subfigure}
  \caption{\textbf{qdot Precision at Each Iteration of ACG}. 
  We plot the percentage of components at each precision within \textbf{qdot} at each iteration of ACG when using the largest \qdot \ tolerance for which ACG converged in the same number of iterations as CG. Plots in (a) -- (d) correspond to 2D problems while (e) -- (h) correspond to 3D problems. In any iteration where zero components were assigned a precision, that precision is not plotted at that iteration (to differentiate from very small percentages).}
  \label{fig:hpccg-precision}
\end{figure}

\begin{figure}[!t]
  \begin{subfigure}[b]{\textwidth}
    \includegraphics[width=\textwidth,trim={0 14cm 0 0}]{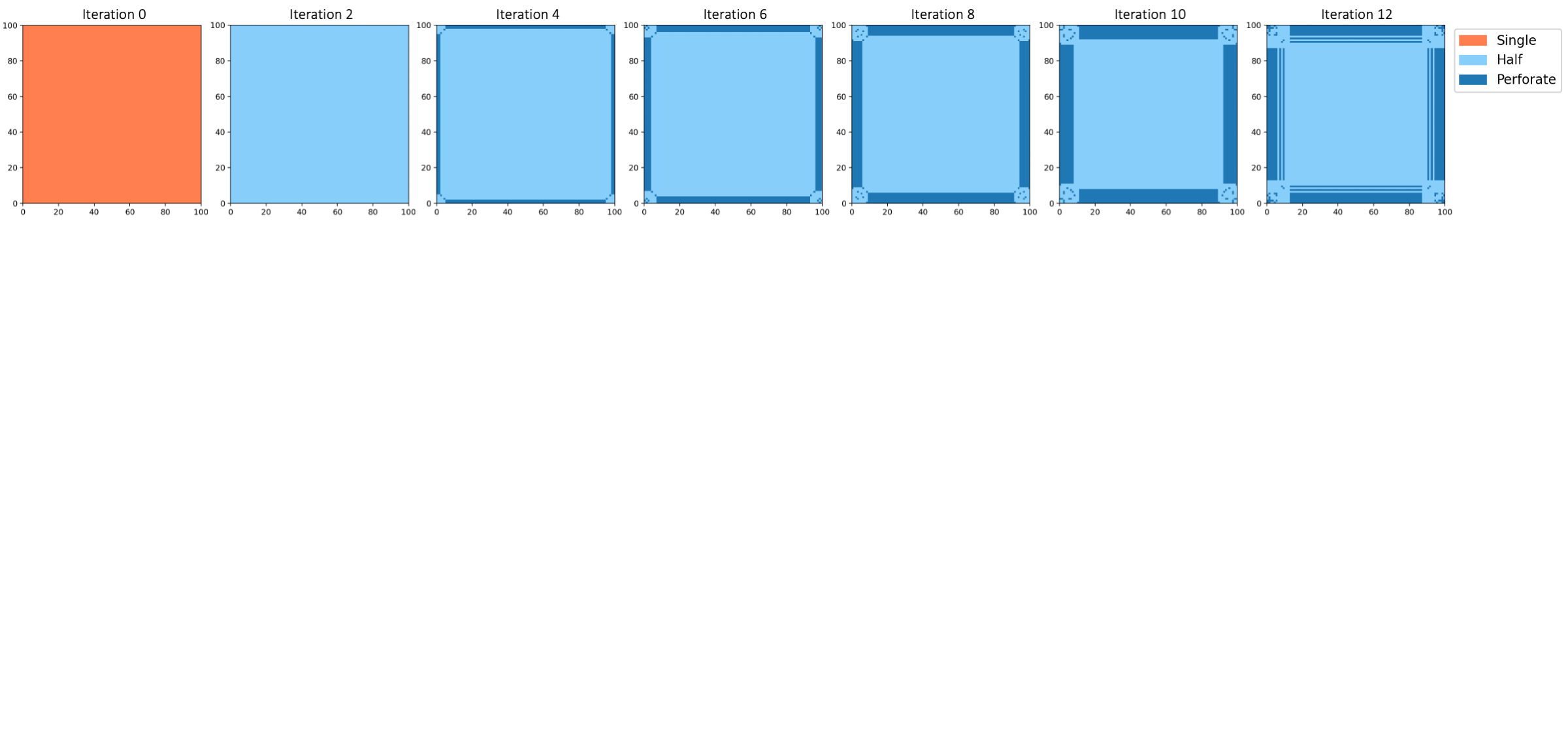}
    \caption{Precision in \qdot \ computation of $\bm{r}^{\intercal} \bm{r}$ in ACG.}
    \label{fig:rtr-qdot-timelapse}
  \end{subfigure}
  \begin{subfigure}[b]{\textwidth}
    \includegraphics[width=\textwidth,trim={0 14cm 0 0}]{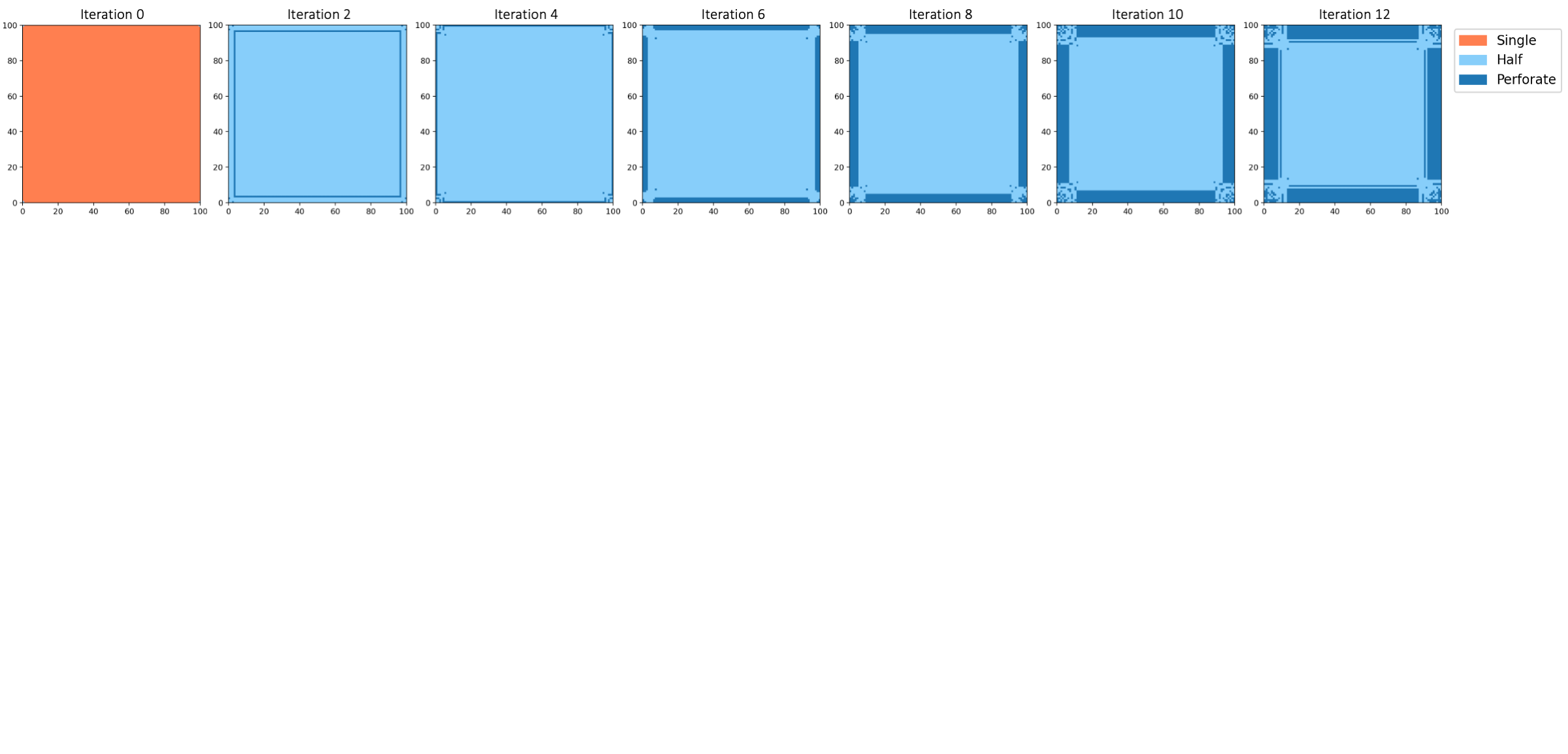}
    \caption{Precision in \qdot \ computation of $\bm{p}^{\intercal} \bm{A} \bm{p}$ in ACG.}
    \label{fig:pAp-qdot-timelapse}
  \end{subfigure}
  \caption{\textbf{Timelapse of Precision Identified by qdot in ACG}. These plots provide a snapshot of the precision used for each component in \textbf{qdot} in the computation of (a) $\bm{r}^{\intercal} \bm{r}$ and (b) $\bm{p}^{\intercal} \bm{A} \bm{p}$ corresponding to it's position on the $100 \times 100$ mesh, at progressive iterations of the CG problem of dimension $100 \times 100 \times 1$. This timelapse corresponds to the precision distribution plot in Figures~\ref{fig:rtr-10000} and \ref{fig:pAp-10000}.
  }
  \label{fig:qdot-timelapse}
\end{figure}

To complete the CG experiments, we provide time lapses of the precision corresponding to each component of the two \textbf{qdot} calls in ACG, on the 2D problem with dimension $100 \times 100 \times 1$.
Figure~\ref{fig:rtr-qdot-timelapse} provides results for computing $\bm{r}_k^{\intercal} \bm{r}_k$ in line 10 of Algorithm~\ref{alg:acg}, and Figure~\ref{fig:pAp-qdot-timelapse} illustrates computing $\bm{p}_k^{\intercal} \bm{q}_k$ line 7 of Algorithm~\ref{alg:acg}.
%The visualization corresponding to precision used by \qdot \ when computing $\bm{r}_k^{\intercal} \bm{r}_k$ in line 10 of Algorithm~\ref{alg:acg} is provided in Figure~\ref{fig:rtr-qdot-timelapse} while the precision used by \qdot \ when computing $\bm{p}_k^{\intercal} \bm{q}_k$ line 7 of Algorithm~\ref{alg:acg} is provided in Figure~\ref{fig:pAp-qdot-timelapse}. 
These figures reveal some structure in the precision distribution of the data used to compute the approximate dot product, as the CG iterations progress. For this particular linear system under consideration, we observe bands of \ppr \ components that increase in size from the boundary of the mesh, as the iteration count increases. As ACG approaches the final iteration some \hpr \ precision begins to mix with the bands perforated components. 
%\textcolor{red}{(James: Can you confirm what we are visualizing in these time-lapse plots? I suspect it is the residual for the $\bm{r}^{\intercal} \bm{r}$ plot, but I am not sure what is for the $\bm{p}^{\intercal} \bm{A} \bm{p}$ plot. Also, did the bands stop growing from the boundary after some iterations, or did they continue to grow (for both figures)? This will help determine if it is a property of CG or a property of the problem (ie. boundary conditions).)}

%These figures indicate that the structure of the problem may provide information on which components are more significant for approximating the dot products required in CG as bands of perforated components increase in size from the boundary of the mesh as the iteration count increases. \textcolor{red}{(Daniel: Maybe mention some of your thoughts about boundary conditions affecting/providing the structure of the different precisions found in the timelapse plots)} We note that similar trends and patterns hold for larger dimensional CG problems, however, due to the larger mesh sizes and increased number of iterations a timelapse is difficult to visualize in a compact format.

%\textcolor{red}{Add timelapse plot for $100 \times 100 \times 1$ problem}

\subsection{qdot with Power Method}
\label{ex:experiment-power-method}
For the final experiment, we integrate \textbf{qdot} into the Power Method \cite{mises1929practice, wilkinson1988algebraic} to compute the eigenvector-eigenvalue pair corresponding to the largest eigenvalue of a randomly generated graph Laplacian matrix. Like the CG method, the Power Method is an iterative technique where two dot products occur at each iteration. Hence, we experiment with an Approximate Power Method (APM) that is designed by replacing the dot products in the Power Method with \textbf{qdot}. For clarity, we provide pseudocode for APM in Algorithm~\ref{alg:apm}. 
\begin{center}
\begin{algorithm}[t]
\begin{algorithmic}[1]
\STATE{\textit{Inputs}: Array $\bm{b} \in \mathbb{R}^n$, matrix $\bm{A} \in \mathbb{R}^{n \times n}$, initial guess for eigenvector $\bm{x}_0 \in \mathbb{R}^n$ with $\| \bm{x}_0 \| = 1$, Power Method error tolerance $\tau$, \textbf{qdot} error tolerance $\varepsilon$.}
\STATE{\textit{Output}: Approximate eigenvalue $\lambda \in \mathbb{R}$ and eigenvector $\bm{x} \in \mathbb{R}^n$ for $\bm{A}$.}
\STATE{$\bm{x}_1 \gets \bm{A} \bm{x}_0 / \| \bm{A} \bm{x}_0 \|$, $\lambda_1 \gets \textbf{qdot}(\bm{x}_0, \bm{x}_{1}, \varepsilon) / \sqrt{\textbf{qdot}(\bm{x}_0, \bm{x}_0, \varepsilon)}$, $k \gets 1$ \hfill Initialize $\lambda$ and index}
\WHILE{$| \lambda_{k} - \lambda_{k-1} | > \tau$}
\STATE{$\bm{x}_{k+1} \gets \bm{A} \bm{x}_k$ \hfill Update guess for eigenvector}
\STATE{$c \gets \textbf{qdot}(\bm{x}_{k+1}, \bm{x}_{k+1}, \varepsilon)$ \hfill Approximate dot product required for normalization}
\STATE{$\bm{x}_{k+1} \gets \bm{x}_{k+1} / \sqrt{c}$ \hfill Normalize guess for eigenvector}
\STATE{$\lambda_{k+1} \gets \textbf{qdot}(\bm{x}_k, \bm{x}_{k+1}, \varepsilon) / c$ \hfill Update guess for eigenvalue}
\STATE{$k \gets k+1$ \hfill Increment $k$ \ }
\ENDWHILE
\end{algorithmic}
\caption{\textbf{APM}: Approximate Power Method Kernel with \textbf{qdot}}
\label{alg:apm}
\end{algorithm}
\end{center}
We fix the convergence tolerance of the power method to $\tau = \texttt{1e-6}$ and perform a maximum of $300$ iterations. 
%\purple{vary the \textbf{qdot} $\varepsilon$ from $\texttt{1e-16}$ to $\texttt{1e0}$ by powers of 10. }
Figures~\ref{fig:power-1e4} and \ref{fig:power-1e5} present various results for problems of dimension \texttt{1e4} and \texttt{1e5}, respectively.

For each problem dimension, we vary the \textbf{qdot} $\varepsilon$, and plot the corresponding number of iterations required for convergence. 
%we plot the number of iterations required for convergence versus the \textbf{qdot} tolerance value to visualize how the tolerance of \textbf{qdot} affects the APM. 
This allows us to identify the highest level of approximation that can be introduced for the same convergence rate as the \dpr \ precision Power Method application, which we will denote simply by PM. Figures~\ref{fig:power-1e4-a} and \ref{fig:power-1e5-a} illustrate the results. In Figures~\ref{fig:power-1e4-b} and \ref{fig:power-1e5-b}, we plot the corresponding absolute error between the eigenvalue estimate computed by APM and the eigenvalue estimate computed by PM. This illustrates the effect of $\varepsilon$ on the quality of the computed eigenvalue using APM. 
%\red{Note that to improve the presentation, the $y$-axis on these plots is split to use a log-scale for the larger absolute error values, and a symmetric log scale for small values.}
%into two parts. This is because log-scale is best for visualising the absolute error of the eigenvalue for larger \qdot \ tolerance values but for smaller \qdot \ tolerance values the absolute approximation error value is 0. Hence, the lower portion of the $y$-axis is in symlog-scale while the upper portion is in log-scale.
The results generally indicate that introducing more approximation by increasing the value of $\varepsilon$ can lead to differences between the computed eigenvalue using APM and PM. While this behavior generally makes sense, it is also understandable since the stopping criterion used in APM and PM is based on the difference between the eigenvalues computed at successive iterates of the respective algorithms (see Algorithm~\ref{alg:apm}).
%\purple{In particular, we found that even for \textbf{qdot} tolerance values for which the number of iterations of APM closely match the number of iterations required by PM, the eigenvalue computed by APM could differ from the eigenvalue computed by PM by more than the power method tolerance $\tau = \texttt{1e-6}$.}

%current and previous estimate for the eigenvalue. This is in contrast to the ACG experiments which did not degrade the quality of the solution when increasing the \textbf{qdot} tolerance as the ACG convergence criterion was satisfied for larger \textbf{qdot} tolerance values thereby allowing for higher level of approximation without compromising the quality of the solution. If \textbf{qdot} was more efficient than the \dpr \ dot product then it would be possible to realize speedup when computing the same quality eigenvalue by using APM instead of PM with a \textbf{qdot} tolerance of \texttt{1e-7}.

Figures~\ref{fig:power-1e4-c} -- \ref{fig:power-1e4-d} and Figures~\ref{fig:power-1e5-c} -- \ref{fig:power-1e5-d} present the distribution of the percentage of components at each precision, corresponding to $\varepsilon = \texttt{1e-7}$. This is the largest \textbf{qdot} tolerance that resulted in an eigenvalue estimate with an absolute error within the accepted tolerance $\tau = \texttt{1e-6}$.
%\purple{Figures~\ref{fig:power-1e4-e} -- \ref{fig:power-1e4-f} and Figures~\ref{fig:power-1e5-e} -- \ref{fig:power-1e5-f} present the distribution corresponding to the \textbf{qdot} tolerance that yielded the fewest number of iterations. This corresponds to $\varepsilon = \texttt{1e-4}$ and $\varepsilon = \texttt{1e-3}$ for problems with graphs of dimension $\texttt{1e4}$ and $\texttt{1e5}$, respectively.}
For these plots, note that we only show the nonzero percentages of the precisions that are represented in the components. Interestingly, the results indicate that a very low percentage of components are needed in \dpr \ precision after the initial iterations. 
%\purple{When the value of $\varepsilon$ is small, we observe a larger percentage of \spr \ precision components compared to when the value is larger. This is to be expected, since the larger value of $\varepsilon$ allows more approximation in the dot product.}

\begin{figure}[!tbp]
  \begin{subfigure}[b]{0.48\textwidth}
    \includegraphics[width=\textwidth]{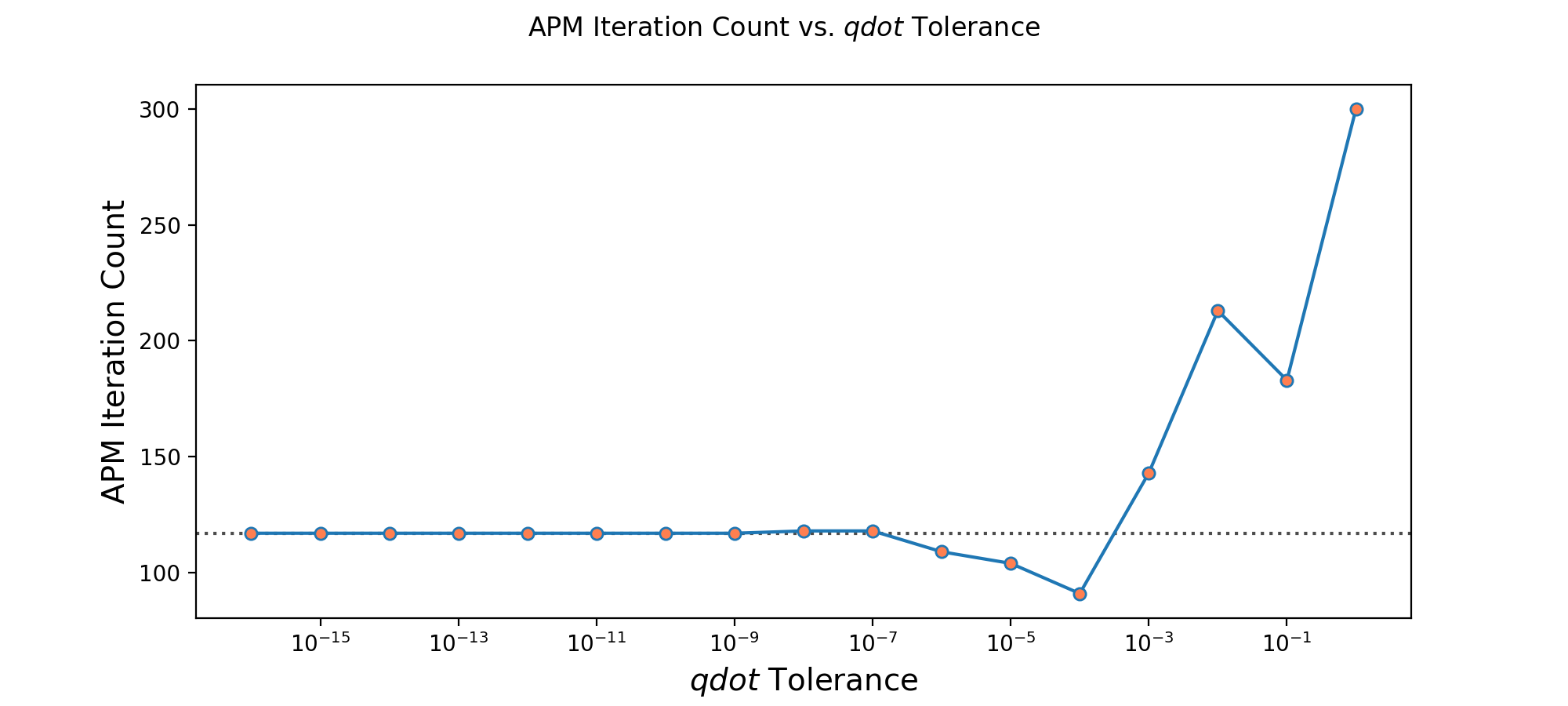}
    \caption{\textit{APM Total Iteration Count}. The dashed line indicates the iteration count of the \dpr \ precision Power Method used with the same graph Laplacian matrix.}
    \label{fig:power-1e4-a}
  \end{subfigure}
  \hfill
  \begin{subfigure}[b]{0.48\textwidth}
    \includegraphics[width=\textwidth]{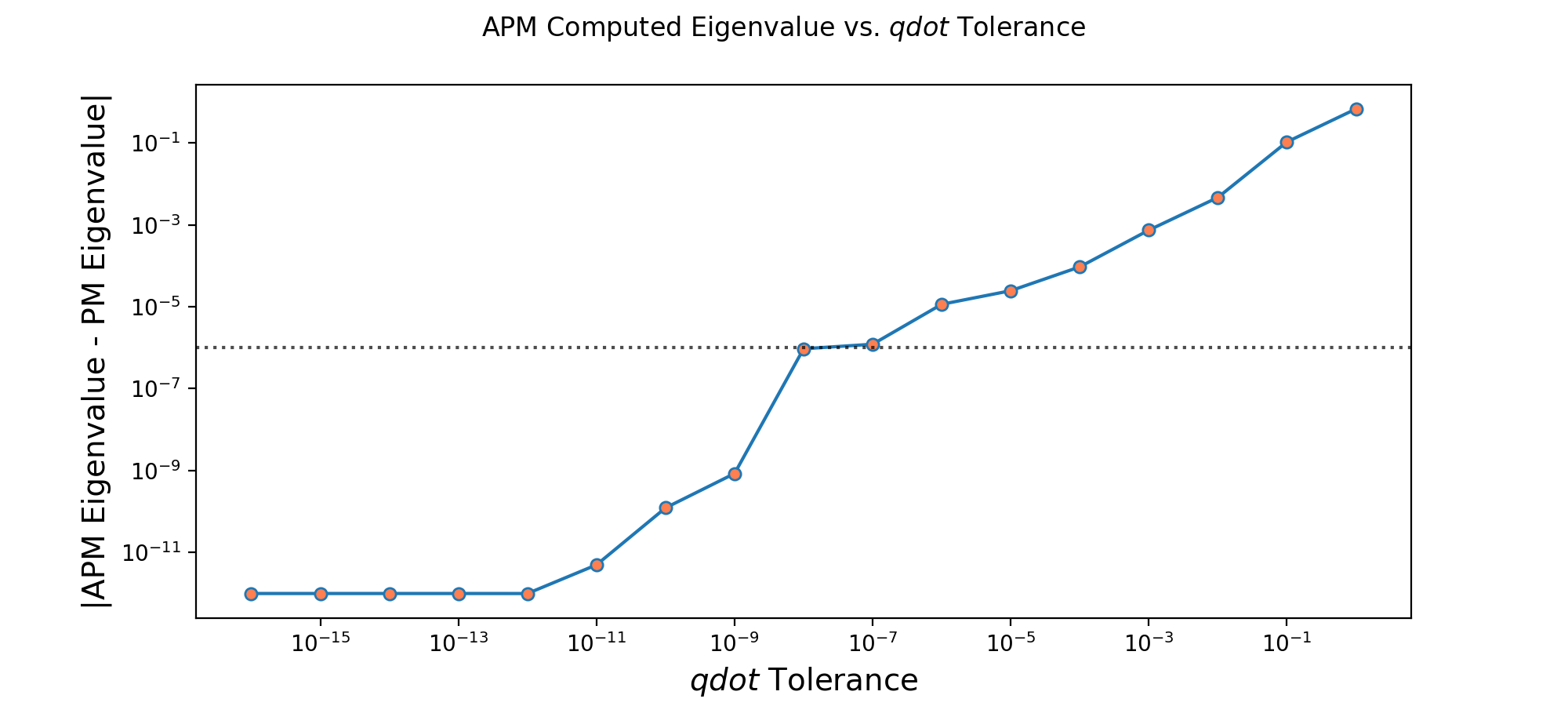}
    \caption{\textit{$|$APM Eigenvalue $-$ PM Eigenvalue$|$ vs. qdot Tolerance}. The dashed line represents the value of the Power Method convergence tolerance $tau$.}
    \label{fig:power-1e4-b}
  \end{subfigure}
  \hfill
  \begin{subfigure}[b]{0.48\textwidth}
  \includegraphics[width=\textwidth]{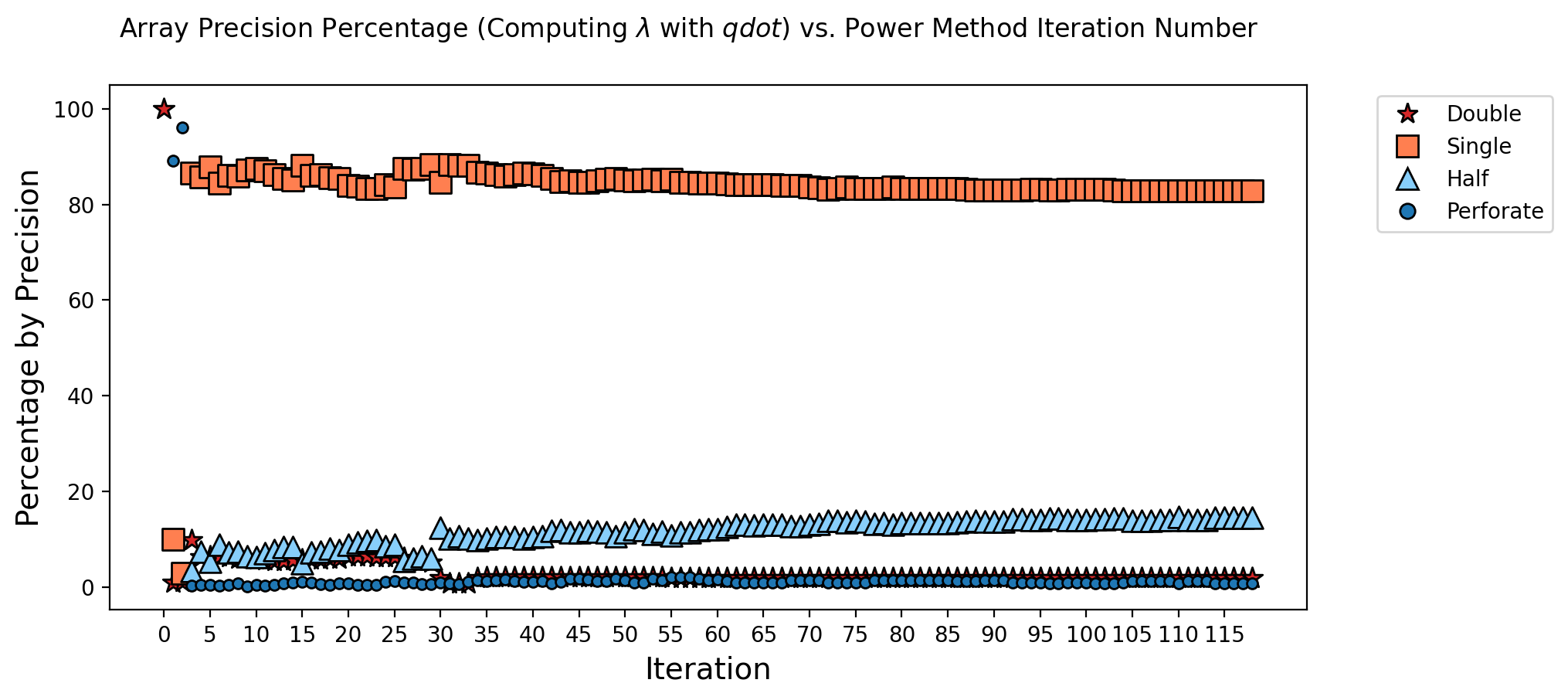}
    \caption{qdot precision percentages during computation of $\lambda$ in APM with qdot tolerance of $\texttt{1e-7}$.}
    \label{fig:power-1e4-c}
  \end{subfigure}
  \hfill
  \begin{subfigure}[b]{0.48\textwidth}
    \includegraphics[width=\textwidth]{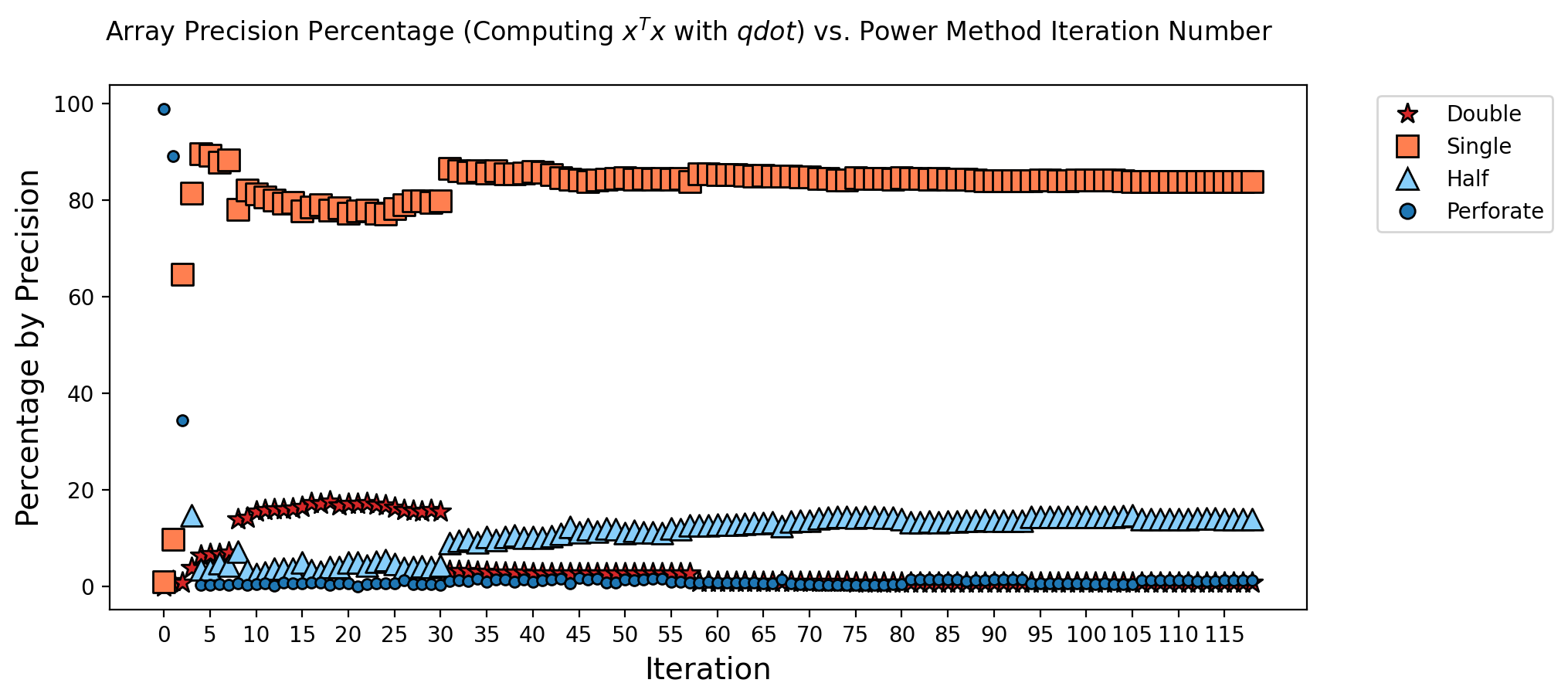}
    \caption{qdot precision percentages during computation of $\bm{x}^{\intercal} \bm{x}$ in APM with qdot tolerance of $\texttt{1e-7}$.}
    \label{fig:power-1e4-d}
  \end{subfigure}
\iffalse %%%%%%%%%%%%%%%%%%%%  
  \hfill
  \begin{subfigure}[b]{0.48\textwidth}
  \includegraphics[width=\textwidth]{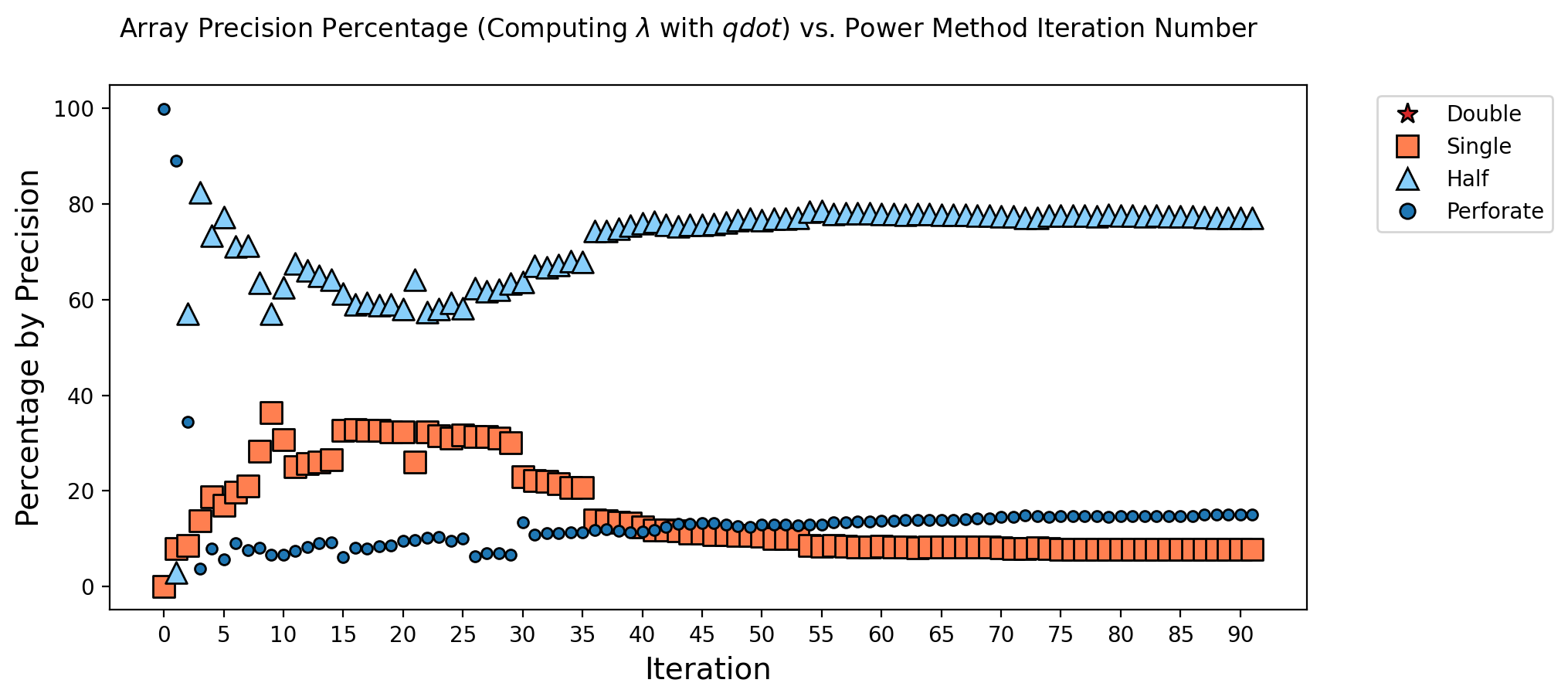}
    \caption{qdot precision percentages during computation of $\lambda$ in ACG with qdot tolerance of $\texttt{1e-4}$.}
    \label{fig:power-1e4-e}
  \end{subfigure}
  \hfill
  \begin{subfigure}[b]{0.48\textwidth}
    \includegraphics[width=\textwidth]{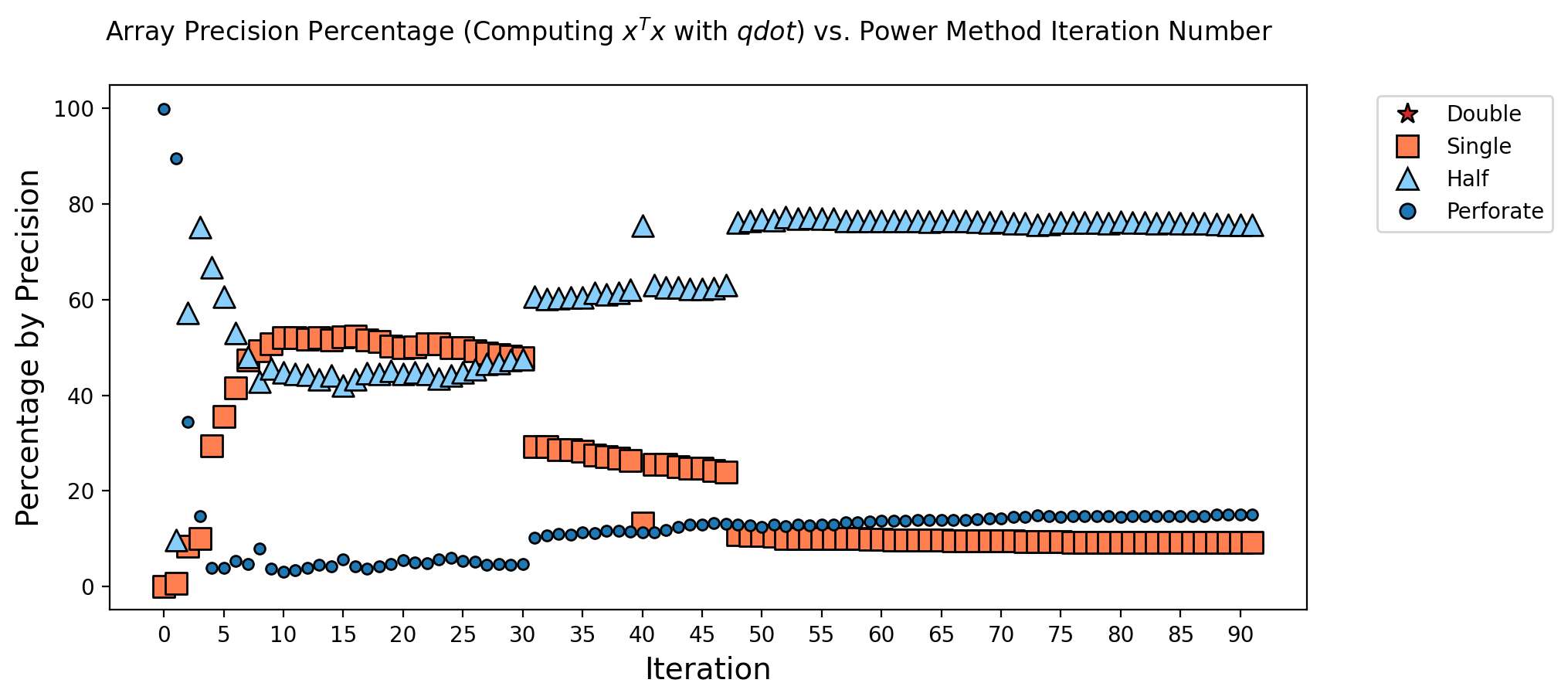}
    \caption{qdot precision percentages during computation of $\bm{x}^{\intercal} \bm{x}$ in ACG with qdot tolerance of $\texttt{1e-4}$.}
    \label{fig:power-1e4-f}
  \end{subfigure}
\fi %%%%%%%%%%%%%%%%%%  
  \caption{\textbf{Comparing Approximate Power Method with qdot to a \dpr \ Precision Power Method (PM) for Problem of Dimension $N=10000$}.}
  \label{fig:power-1e4}
\end{figure}

\begin{figure}[!tbp]
  \begin{subfigure}[b]{0.48\textwidth}
    \includegraphics[width=\textwidth]{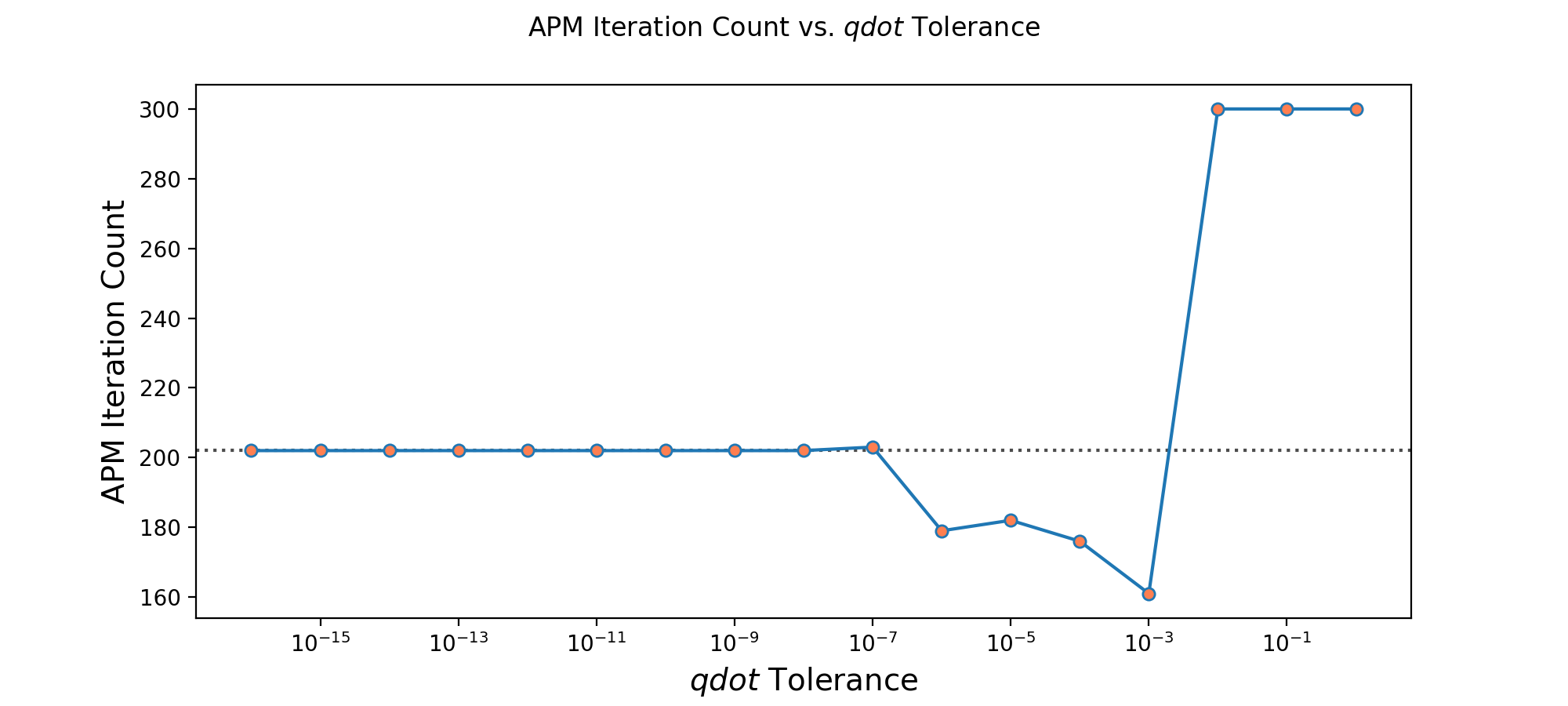}
    \caption{\textit{APM Total Iteration Count}. The dashed line indicates the iteration count of the \dpr \ precision Power Method used with the same graph Laplacian matrix.}
    \label{fig:power-1e5-a}
  \end{subfigure}
  \hfill
  \begin{subfigure}[b]{0.48\textwidth}
    \includegraphics[width=\textwidth]{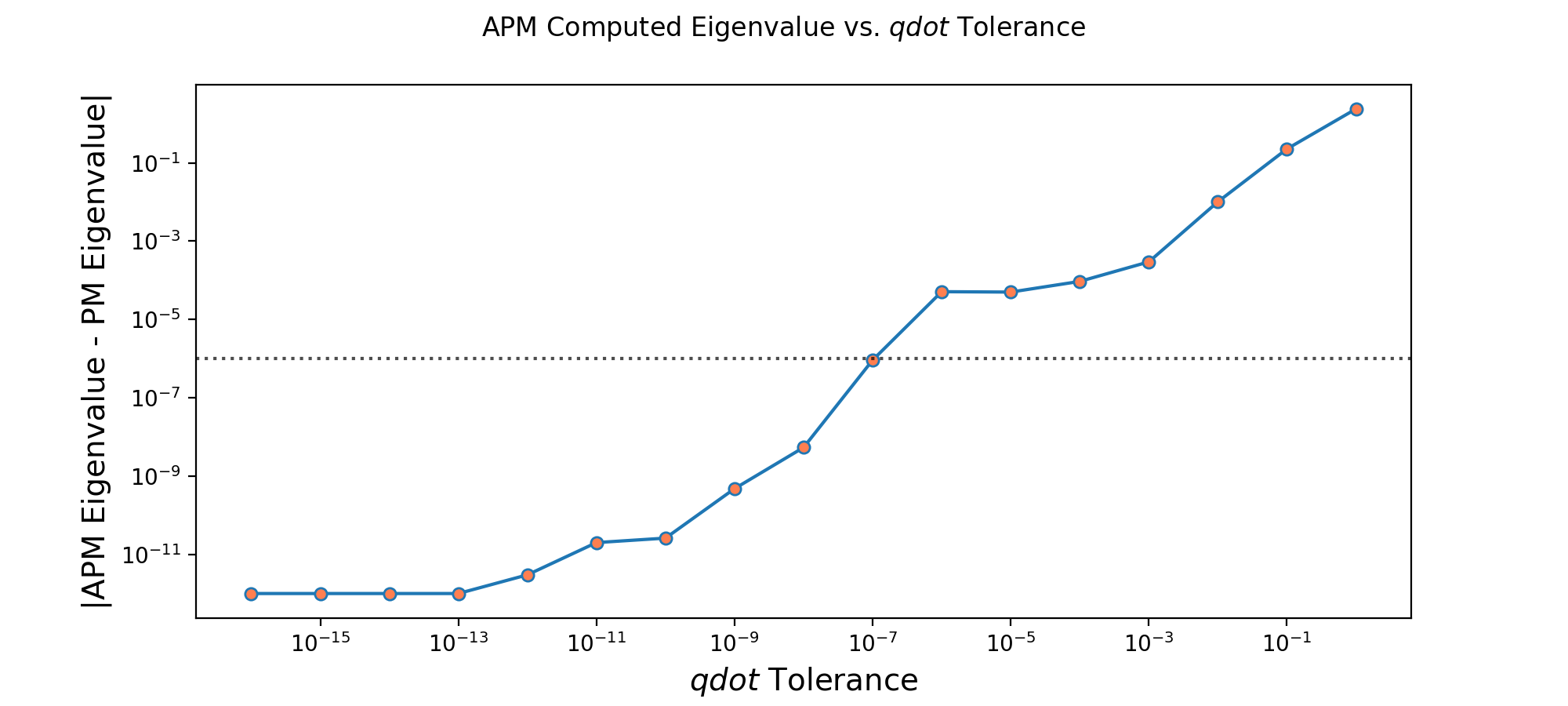}
    \caption{\textit{$|$APM Eigenvalue $-$ PM Eigenvalue$|$ vs. qdot Tolerance}. The dashed line represents the value of the Power Method convergence tolerance $tau$.}
    \label{fig:power-1e5-b}
  \end{subfigure}
  \hfill
  \begin{subfigure}[b]{0.48\textwidth}
  \includegraphics[width=\textwidth]{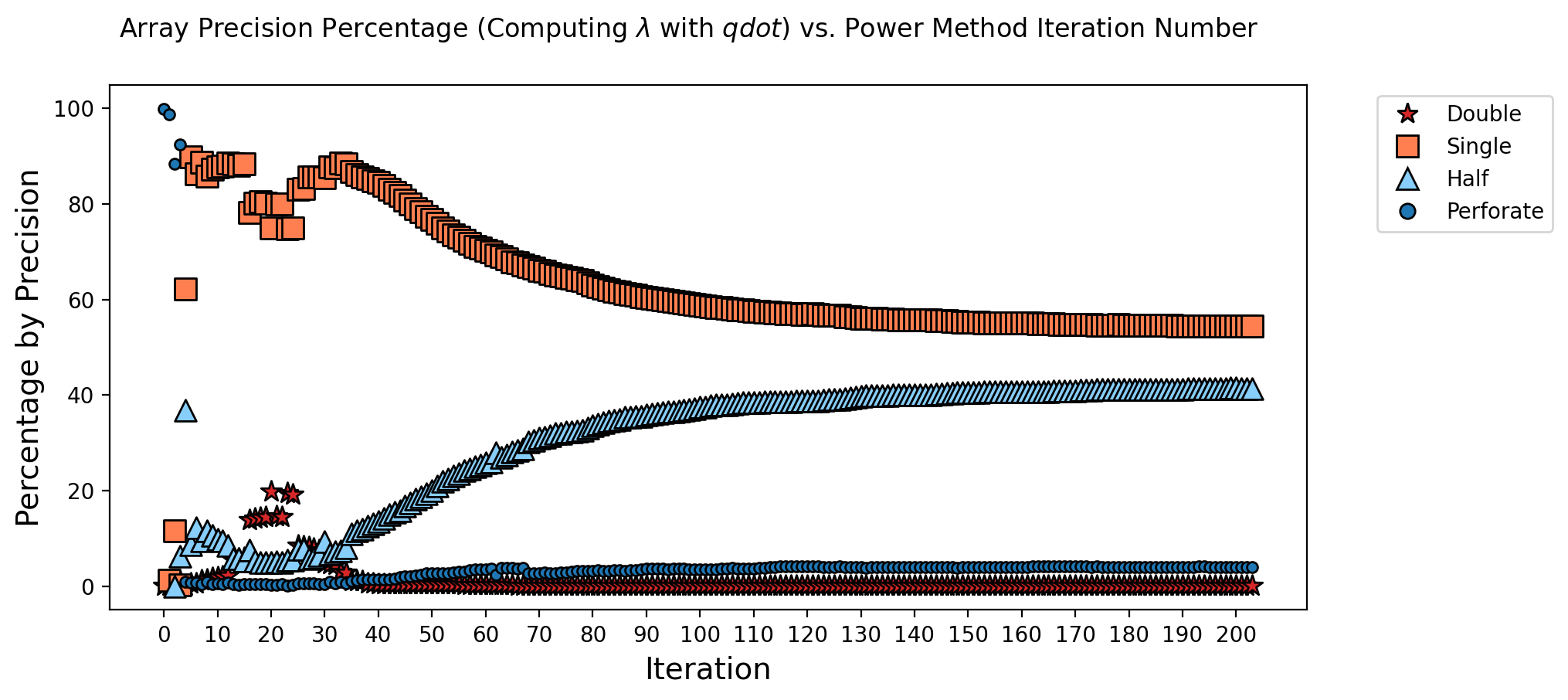}
    \caption{qdot precision percentages during computation of $\lambda$ in APM with qdot tolerance of $\texttt{1e-7}$.}
    \label{fig:power-1e5-c}
  \end{subfigure}
  \hfill
  \begin{subfigure}[b]{0.48\textwidth}
    \includegraphics[width=\textwidth]{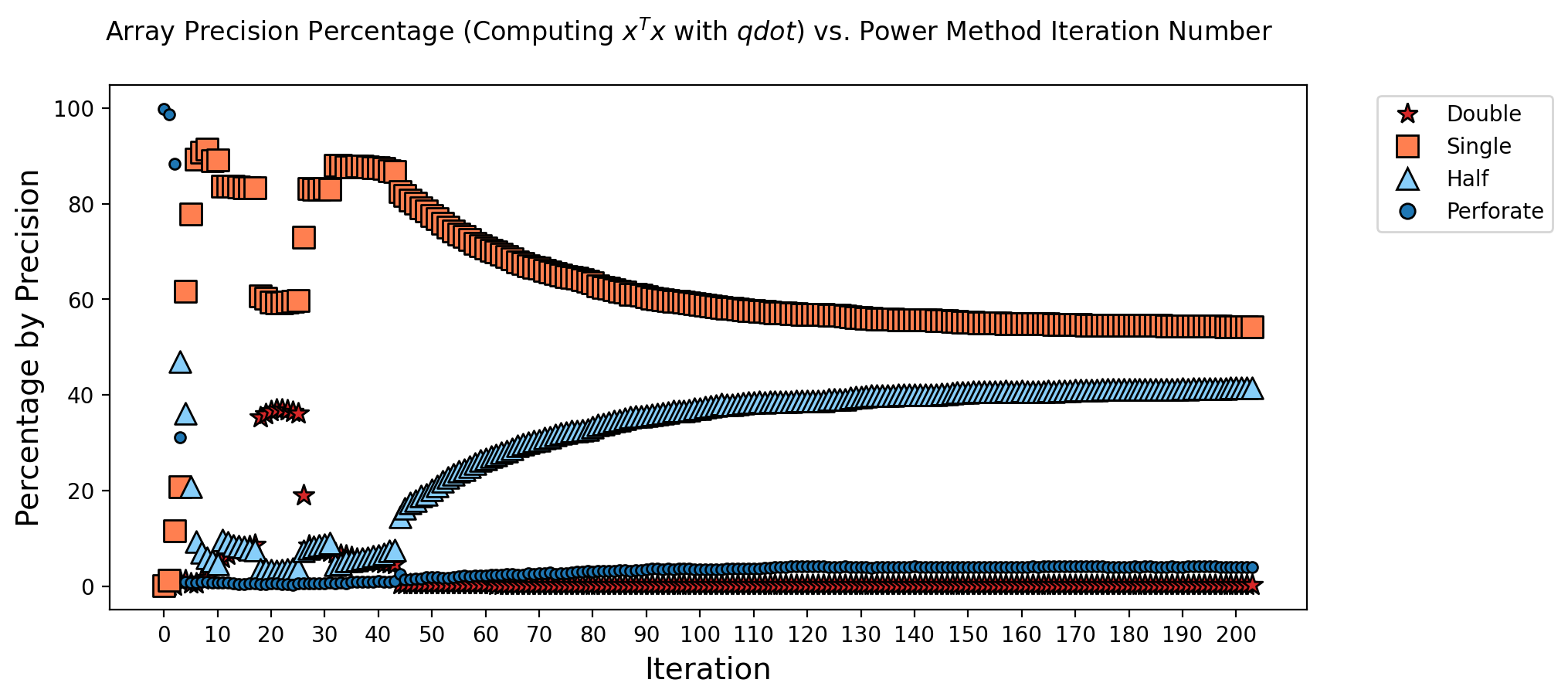}
    \caption{qdot precision percentages during computation of $\bm{x}^{\intercal} \bm{x}$ in APM with qdot tolerance of $\texttt{1e-7}$.}
    \label{fig:power-1e5-d}
  \end{subfigure}
\iffalse %%%%%%%%%%%%%%%%%%%%  
  \hfill
  \begin{subfigure}[b]{0.48\textwidth}
  \includegraphics[width=\textwidth]{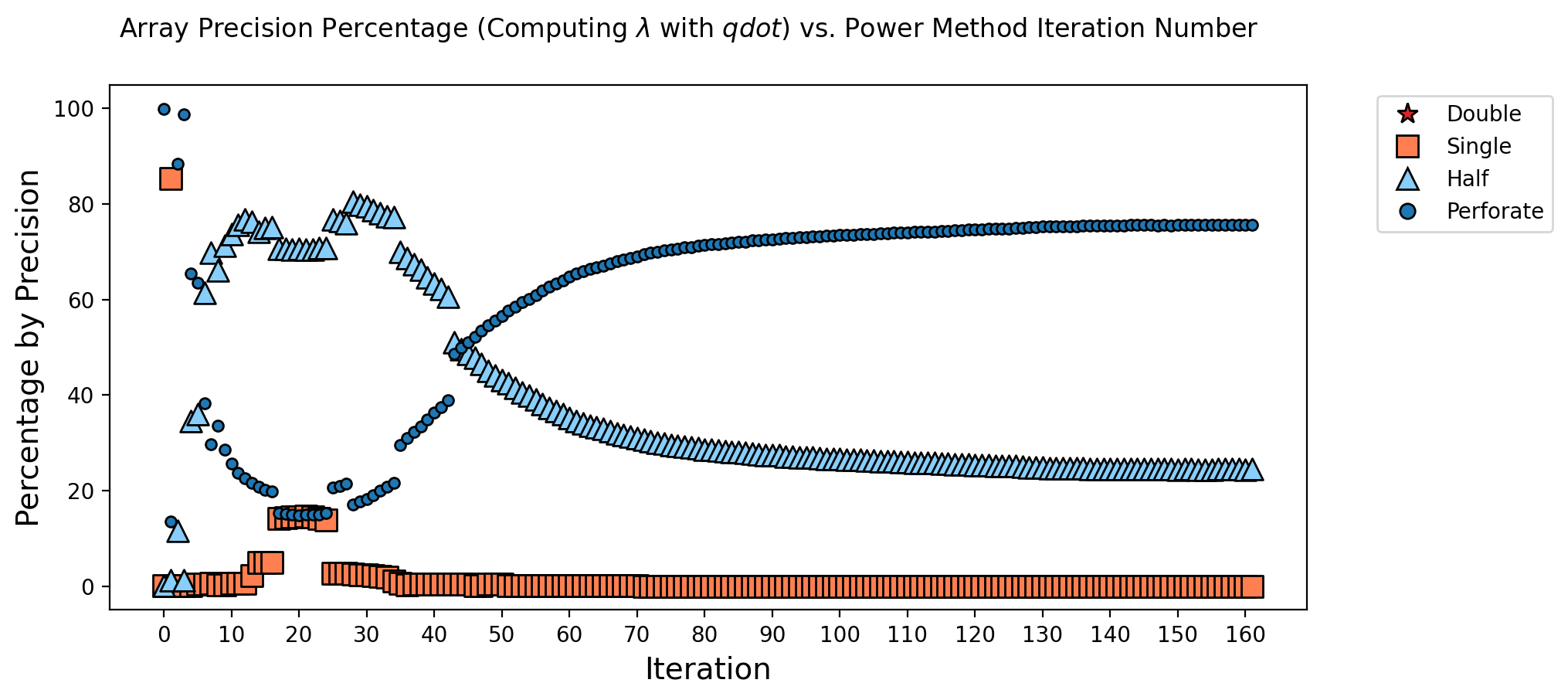}
    \caption{qdot precision percentages during computation of $\lambda$ in ACG with qdot tolerance of $\texttt{1e-3}$.}
    \label{fig:power-1e5-e}
  \end{subfigure}
  \hfill
  \begin{subfigure}[b]{0.48\textwidth}
    \includegraphics[width=\textwidth]{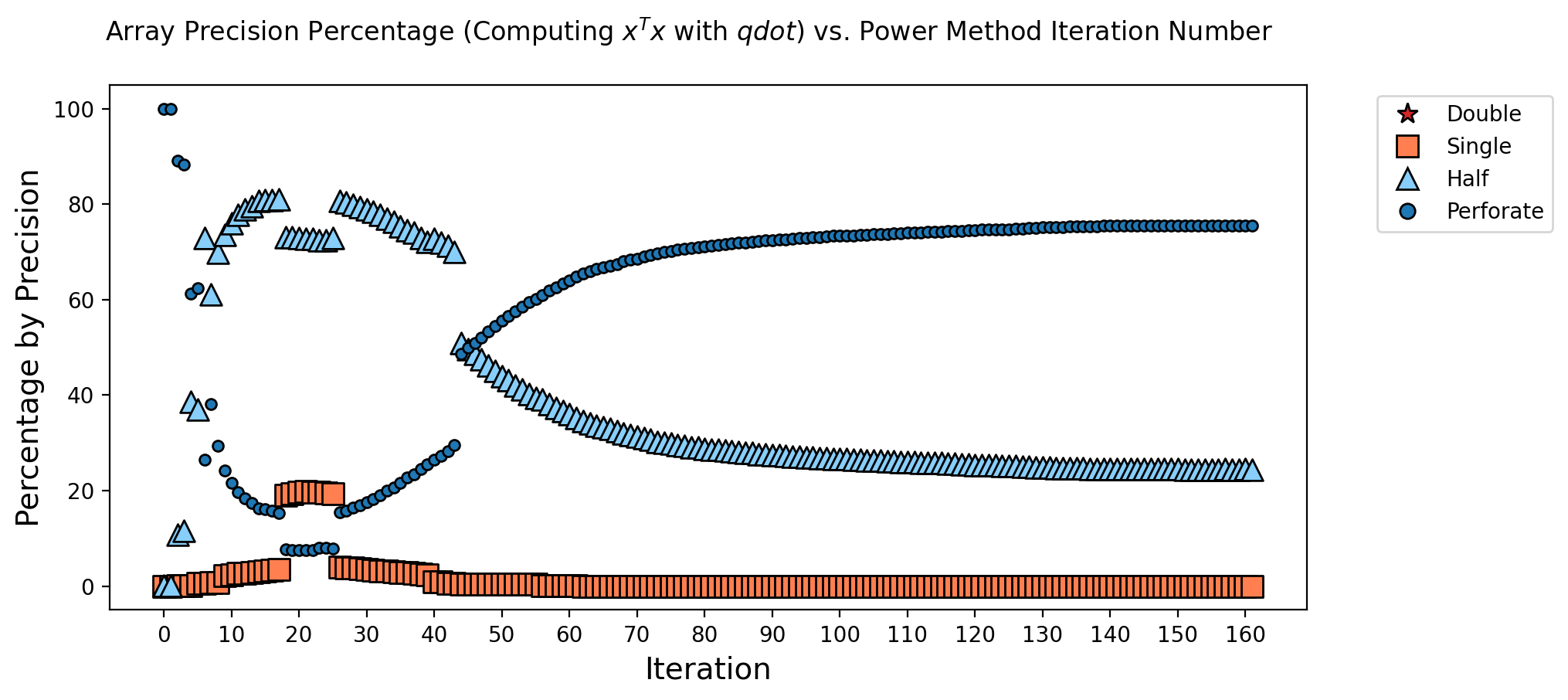}
    \caption{qdot precision percentages during computation of $\bm{x}^{\intercal} \bm{x}$ in ACG with qdot tolerance of $\texttt{1e-3}$.}
    \label{fig:power-1e5-f}
  \end{subfigure}
\fi  %%%%%%%%%%%%%%%%%%%%%
  \caption{\textbf{Comparing Approximate Power Method with qdot to a \dpr \ Precision Power Method (PM)} for Problem of Dimension $N=100000$. }
  \label{fig:power-1e5}
\end{figure}

%%%%%%%%%%%%%%%%%%%%%%%%%%%%%%%%%%%%%%%%%
\section{Conclusion}
In this paper, we formalized a general framework for designing error bounded approximate computing kernels, and applied this framework to the dot product kernel to design \textbf{qdot}. We theoretically proved and empirically demonstrated that \textbf{qdot} bounds the relative error introduced by the approximation. Numerical experiments on the Conjugate Gradient and Power Method algorithms demonstrate that high levels of approximation can be introduced into these algorithms without degrading their performance.
%\purple{using a linear solver and eigenvalue computation application demonstrate that high levels of approximation can be introduced into these applications without degrading the performance of each application. }

While \textbf{qdot} was demonstrated to have good approximation effectiveness, achieving this effectiveness comes at the cost of a parameter selection phase that limits the approximation efficiency in practice. However, for applications that can tolerate high levels of approximation in the dot product, such as those presented in this paper, one can design a more efficient strategy at some cost to the effectiveness. One strategy is to consider a blocked approach that targets the least efficient step in our implementation -- sorting the array of exponent values -- to improve efficiency. 
%\green{This strategy improves the complexity of the sorting and binning steps to $O(n/b + e_{\max} - e_{\min} + 1)$, where $b$ denotes the size of the block.}
Future work could explore finding a better balance between approximation effectiveness and efficiency so that \textbf{qdot} could be practically used at runtime.
%\purple{without incurring an overhead cost to the original kernel. }
If successful, this could lead to an approximate strategy for the matrix-vector multiplication kernel that balances effectiveness and efficiency. Furthermore, the formalized general framework presented in this paper could be used to design other error-bounded approximate computing strategies for kernels other than the dot product.

This work was performed under the auspices of the U.S. Department of Energy by Lawrence Livermore National Laboratory under Contract DE-AC52-07NA27344. Work at LLNL was funded by the Laboratory Directed Research and Development Program under project tracking code 20-ERD-043. 

\bibliographystyle{siamplain}
\bibliography{qdot_paper}

\end{document}